\crefname{enumi}{part}{parts}% Refer to items as "Part"
\crefname{paragraph}{observation}{observations}
\Crefname{paragraph}{Observation}{Observations}
\numberwithin{equation}{section}
\newtheorem{theorem}[equation]{Theorem} 
\newtheorem{proposition}[equation]{Proposition}
\newtheorem{lemma}[equation]{Lemma} 
\newtheorem{cor}[equation]{Corollary}
\newtheorem{example}[equation]{Example}
\newtheorem{remark}[equation]{Remark}
\newcommand{\ts}{\hspace{.1ex}}%tinyspace
\newcommand{\curlywedgeup}{
\raisebox{.25ex}{\scalebox{.5}{ $\!\!\!\curlywedge$}}}
\newcommand{\bigcurlywedge}{\text{\huge $\curlywedge$}}
\newcommand{\medcurlywedge}{\text{\Large $\curlywedge$}}
\newcommand{\Wedge}{\wedge}
\newcommand{\cross}{\times}
\newcommand{\volv}{\textnormal{vol}_V}
\newcommand{\volx}{\textnormal{vol}_{V^*}}
\newcommand{\coef}{\textnormal{Coef}}
\newcommand{\FF}{\mathbb{F}}
\newcommand{\SL}{\text{SL}}
\newcommand{\GL}{\text{GL}}
\newcommand{\A}{\mathcal A}
\newcommand{\B}{\mathcal B}
\newcommand{\F}{\mathcal F}
\newcommand{\R}{\mathcal R}
\newcommand{\ep}{\varepsilon}
\newcommand{\om}{\omega}
\newcommand{\ot}{\otimes}
\newcommand{\del}{\partial}
\newcommand{\ZZ}{\mathbb{Z}}
\newcommand{\CC}{\mathbb{C}}
\newenvironment{smallpmatrix}
  {\left(\begin{smallmatrix}}
  {\end{smallmatrix}\right)}
\DeclareMathOperator{\characteristic}{char}
\newcommand{\rank}{\text{rank}}
\newcommand{\sbinom}[3][0.8]{\scalebox{#1}{$\binom{#2}{#3}$}}
\newenvironment{smallarray}[1]
 {\null\,\vcenter\bgroup\scriptsize\tiny
  \arraycolsep=.2em
  \hbox\bgroup$\array{@{}#1@{}}}
 {\endarray$\egroup\egroup\,\null}
\begin{document}

\begin{abstract}
Much of the captivating numerology surrounding finite reflection groups 
stems from Solomon's celebrated 1963 theorem
describing invariant differential forms.
Invariant 
differential 
derivations also exhibit
fascinating numerology
over the complex numbers
linked to 
rational Catalan combinatorics.
We explore the analogous theory
over arbitrary fields,
in particular,
when the characteristic
of the underlying field 
divides the order of 
the acting reflection group and
the conclusion of Solomon's Theorem 
may fail.
Using results of Broer and Chuai,
we give a Saito criterion
(Jacobian criterion)
for finding a basis of
differential derivations
invariant under a finite group that distinguishes
certain cases over
fields of characteristic
$2$.
We show that the reflecting hyperplanes lie in a single orbit and demonstrate a duality of exponents and coexponents when the transvection root spaces of a reflection group
are maximal.
A set of basic derivations are used to construct a basis of invariant differential derivations with a twisted wedging in this case.
We obtain explicit bases
for 
the special linear 
groups
$\SL(n,q)$ and general
linear groups $\GL(n,q)$, and all groups in between.
\end{abstract}

\title[Invariant differential derivations for
reflection groups]{Invariant 
differential derivations\\ for 
reflection groups\\
in positive
characteristic}

\date{April 7, 2023}
\keywords{reflection groups,
invariant theory,
hyperplanes}
\subjclass[2010]{
20F55, 13A50, 52C35, 20C20, 16W22}

\author{D.\ Hanson}
\address{Department of Mathematics, University of North Texas,
Denton, Texas 76203, USA}
\email{dillon.hanson@unt.edu}
\author{A.\ V.\ Shepler}
\address{Department of Mathematics, University of North Texas,
Denton, Texas 76203, USA}
\email{ashepler@unt.edu}
\thanks{The second author was partially supported by Simons grants 
429539 and 949953}

\maketitle
%%%%%%%%%%%%%%%%%%%%%%%%%%%%%%%%%%%%
\section{Introduction}
Solomon \cite{Solomon} showed that the set of differential forms invariant under the action of a complex reflection group forms a free exterior algebra.
The situation is more subtle
over an arbitrary field, especially
when
the characteristic of the underlying
field $\FF$ divides the order
of the acting group, the
so-called {\em modular setting}.
Zalesskii and Sere\v{z}kin~\cite{ZS}
classified the irreducible
reflection groups over fields of positive
characteristic,
but not every reflection group is the sum of irreducible reflection groups,
and many interesting examples
are reducible with
nondiagonalizable reflections.
Hartmann \cite{Hartmann}
showed that the conclusion
of
Solomon's Theorem 
holds for a group  generated
by diagonalizable reflections
whose ring of invariant polynomials
forms a polynomial algebra.
Hartmann and the second author
\cite{HartmannShepler} 
extended this work 
to exhibit the space of
invariant differential forms
as a free exterior algebra
via a twisted wedge product
when the transvection root spaces are maximal.
Such groups include $\SL_n(\FF_q)$ and $\GL_n(\FF_q)$
for a finite field $\FF_q$, and we explore
these groups as analogs of Coxeter
and 
well-generated complex reflection groups.
We assume all reflection groups are finite.
 
Recently,
attention has turned to
{\em differential derivations}
as their invariants under a reflection group
arise in 
Catalan combinatorics
with connections to
rational
Cherednik algebras, 
symplectic reflection algebras,
and Lie theory 
(e.g., see
\cite{Gordon2003,BerestEtingofGinzburg2003,BessisReiner2011,Rhoades2014,ParkingSpaces,ReinerShepler,DCPP,ReinerSommersShepler}). %
The differential derivations invariant under the action of a well-generated complex reflection group 
constitute a free module
over a certain subalgebra
of the invariant differential 
forms,
and
associated Hilbert series give Kirkman 
numbers
(see \cite{ReinerShepler,ReinerSommersShepler}).

We investigate the
case over 
an arbitrary field $\FF$ here.
We examine  the set 
$(S\otimes \wedge V^*\otimes V)^G$ of
differential derivations 
invariant under a finite
group $G$ acting linearly on a 
finite-dimensional vector space $V=\FF^n$,
with symmetric algebra
$S=S(V^*)$, a polynomial ring.
We 
include the modular setting
when $\characteristic(\FF)$
divides $|G|$.
Broer and Chuai \cite{BroerChuai} 
used ramifications over prime
ideals to give
a general Jacobian criterion.
This criterion requires
a full description of the invariant theory
for 
groups fixing a single hyperplane.
Finding this description
may be trivial when all group elements are diagonalizable but
often is a sticking point when working over arbitrary fields.
Here, we require a rigorous analysis of the actions of transvections on differential derivations (see \cref{appendix}).
We develop a Saito criterion in terms of pointwise stabilizers for determining whether a set of homogeneous elements is a basis:
\begin{theorem}
\label{SaitoThmIntro}
Consider a finite group $G\subset\GL(V)$ acting on $V=\FF^n$.
For a set $\B$ of $n\tbinom{n}{k}$ homogeneous elements in $(S\ot \wedge^k V^*\ot V)^G$, the following are equivalent:

\begin{enumerate}
\setlength{\itemindent}{-2.5ex}
\item[a)]
$(S\otimes \wedge^k V^*\otimes V)^G$ is a free $S^G$-module with basis $\B$.

\item [b)] The coefficient matrix of $\B$ has determinant 
\rule{0ex}{4ex}
$ Q_{\rule[0ex]{0ex}{1ex} %strut 
}^{\sbinom{n-1}{k}} Q_{\det}^{(n-1)\sbinom{n-1}{k-1}} Q_k$ up to a nonzero scalar.
\rule[0ex]{0ex}{0ex}% strut

\item [c)]
$\B$ is independent over $\F(S)$
and 
$\displaystyle
\sum_{\eta\in\B} \deg\eta=\sum_{H\in\A} \tbinom{n-1}{k}+(e_H-1)(n-1)\tbinom{n-1}{k-1}+e_H a_{H,k}\, .
$
\end{enumerate}
\end{theorem}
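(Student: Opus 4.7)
The plan is to prove the cyclic chain $(a)\Rightarrow(b)\Rightarrow(c)\Rightarrow(a)$, using the Broer--Chuai Jacobian criterion to translate the basis property into a divisibility statement on the coefficient determinant, with divisibility exponents determined by a local analysis of each pointwise stabilizer. The appendix's detailed description of transvection actions on $\wedge^k V^*\ot V$ supplies the local data (encoded in $a_{H,k}$) that makes Broer--Chuai usable in the modular setting.

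For $(a)\Rightarrow(b)$, I would fix an $S$-basis of the rank-$n\tbinom{n}{k}$ free module $S\ot\wedge^k V^*\ot V$ and write each $\eta\in\B$ as a column of coefficients, producing a square matrix $M$ with $\det M\in S$. For each hyperplane $H\in\A$ with defining form $\alpha_H$, the Broer--Chuai criterion together with the local analysis of the pointwise stabilizer $G_H$ on $\wedge^k V^*\ot V$ forces $\alpha_H^{c_H}$ to divide $\det M$, where
\[
c_H=\tbinom{n-1}{k}+(e_H-1)(n-1)\tbinom{n-1}{k-1}+e_H a_{H,k}.
\]
The three summands correspond respectively to the $G_H$-action on $\wedge^k V^*$, to the $(n-1)$-dimensional fixed subspace of $V$ in the tensor factor $V$, and to the transvection-specific correction computed in the appendix. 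Using $\deg Q=|\A|$, $\deg Q_{\det}=\sum_H(e_H-1)$, and $\deg Q_k=\sum_H e_H a_{H,k}$, the product of these divisors matches $Q^{\sbinom{n-1}{k}}Q_{\det}^{(n-1)\sbinom{n-1}{k-1}}Q_k$; a Hilbert series computation for $(S\ot\wedge^k V^*\ot V)^G$ then shows that $\det M$ has exactly this total degree, so no extra factors can appear and (b) holds up to a nonzero scalar.

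The implication $(b)\Rightarrow(c)$ is a direct degree count, with nonvanishing of $\det M$ giving $\F(S)$-independence of $\B$. For $(c)\Rightarrow(a)$, I would use the $\F(S)$-independence to conclude $\det M\neq 0$, apply the divisibility half of Broer--Chuai (which uses only that each $\eta\in\B$ is $G$-invariant, not that $\B$ is a basis) to show $\alpha_H^{c_H}\mid\det M$ for every $H\in\A$, and then use the degree hypothesis in (c) to upgrade this divisibility to an equality up to nonzero scalar, reaching (b). A final invocation of Broer--Chuai then converts this determinantal equality back into the basis property $(a)$.

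The main obstacle is the local analysis at each hyperplane. The exponent $c_H$, and in particular the integer $a_{H,k}$, depends delicately on whether $G_H$ contains transvections and on $\characteristic(\FF)$, with genuine anomalies in characteristic $2$. The appendix works out $(S\ot\wedge^k V^*\ot V)^{G_H}$ case by case for transvection-containing stabilizers, and this is the essential input that replaces the classical diagonalizable-reflection argument of Saito; without it the Broer--Chuai divisibility bound cannot be sharpened to the explicit product appearing in (b), and the equivalence collapses.
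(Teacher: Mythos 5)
Your overall architecture matches the paper's: establish a divisibility lower bound on $\det\coef(\B)$ hyperplane by hyperplane using the appendix's analysis of transvection actions, then invoke the Broer--Chuai criterion to pass between the determinantal identity in (b) and the basis property in (a), with (c) following by a degree count. However, there is a genuine gap at the single step you dispatch with ``a Hilbert series computation for $(S\ot\wedge^k V^*\ot V)^G$ then shows that $\det M$ has exactly this total degree.'' The Broer--Chuai theorem requires, as input, the \emph{exact} exponent of $\ell_H$ in the coefficient determinant of a basis of the local module $(S\ot\wedge^k V^*\ot V)^{G_H}$ for each pointwise stabilizer $G_H$; the appendix only supplies a divisibility \emph{lower bound} (it does not ``work out $(S\ot\wedge^k V^*\ot V)^{G_H}$ case by case,'' as you assert), and in the modular setting there is no Molien-type formula that would let you compute the relevant Hilbert series independently and conclude that the bound is attained. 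Indeed, the very quantity at stake, $a_{H,k}$, contains the correction term $\delta_H$ that detects whether $G_H$ consists of a single transvection and the identity --- a phenomenon invisible to any character-theoretic degree count.

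The paper closes this gap in \cref{OneHyperplane} by \emph{explicitly constructing}, for each one-hyperplane group, an $\F(S)$-independent set of $n\tbinom{n}{k}$ invariants whose coefficient determinant has degree exactly $\tbinom{n-1}{k}+(e_H-1)(n-1)\tbinom{n-1}{k-1}+e_Ha_{H,k}$: the sets $\B_k$ built from rescaled twisted products $\tilde\omega_I\theta_j$ and $\tilde\omega_I\,d\theta_E$ when $\delta_H=0$, and the modified sets $\B_k'$ involving the extra invariant $\eta_0=x_1\ot x_1\ot v_1+x_n\ot x_1\ot v_n+x_1\ot x_n\ot v_1+x_1\ot x_n\ot v_n$ when $\delta_H=1$ (which occurs only in characteristic $2$). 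Squeezing the determinant of a genuine basis between the divisibility lower bound and the degree of this explicit independent set is what pins down the local exponent and makes Broer--Chuai applicable with the stated formula. Without such a construction (or an equivalent sharpness argument), your chain $(c)\Rightarrow(a)$ and the exactness claim in $(a)\Rightarrow(b)$ both remain unproved.
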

\noindent
Here, $\F(S)$ is the field of fractions of
$S$,
$e_H$ records the maximal order of a diagonalizable reflection in $G$ about each $H$ in the collection 
$\A$
of reflecting hyperplanes 
of $G$,
the polynomial $Q_k$ in $S$ (see \cref{Q_kdef}) depends on the transvection root space of each $H$,
and the nonnegative integers
$a_{H,k}$ (see \cref{adef})
depend additionally
on the characteristic of $\FF$ in a subtle way.

We argue that reflection groups
 with transvection roots spaces
all maximal, such as
groups $G$
with $\SL_n(\FF_q)
\subset G \subset \GL_n(\FF_q)$ (see \cref{slglsection}),
serve as analogues
of the duality (well-generated)
complex reflection groups
with {\em Coxeter number}
given as the number
of reflecting hyperplanes
times the maximal order of a diagonalizable reflection in
the group (see
\cref{q-Catalan}
and \cref{CoxeterNumber}).
The following result provides the structure of the invariant differential derivations for this class of reflection groups.
\begin{theorem}
\label{AllButOneInto}
Let $G\subset \GL(V)$ be a reflection group with transvection root spaces all maximal 
and $\characteristic \FF\neq 2$.
Suppose $(S\ot V)^G$ is a free $S^G$-module with basic derivations $\theta_1,\ldots, \theta_n$ 
and dual 1-forms $\om_1,\ldots, \om_n$. 
Then $(S\ot \wedge V^*\ot V)^G$ is a free $S^G$-module with basis
$$
\big\{d\theta_E\big\}
\cup
\big\{\omega_I^{\curlywedgeup}\, \theta_1,\dots,\omega_I^{\curlywedgeup}\, \theta_n:
I\subset[n]\big\}
\setminus 
\big\{\om_r \theta_r \big\}
\quad
\text{for any  $r=1,\dots,n$}.
$$
\end{theorem}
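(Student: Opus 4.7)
The plan is to apply the Saito-style criterion of \cref{SaitoThmIntro}, aiming for condition (c): $\F(S)$-linear independence of the proposed set together with the correct total degree at each exterior level $k$. I first grade the collection by exterior power: each $\omega_I^{\curlywedgeup}\theta_j$ with $|I|=k$ contributes to $(S\ot\wedge^k V^*\ot V)^G$, and each $d\theta_E$ contributes at the level determined by $|E|$. Standard binomial bookkeeping should show that the single deletion of $\omega_r\theta_r$ at $k=1$ and the $d\theta_E$-family together leave exactly $n\binom{n}{k}$ elements at each $k$, which is the required cardinality.

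For the degree sum in (c), I would combine two inputs advertised in the introduction. First, the hypothesis that all transvection root spaces are maximal implies $\A$ forms a single $G$-orbit with $e_H$ and $a_{H,k}$ constant on $\A$, reducing the right-hand side to $|\A|\bigl[\binom{n-1}{k}+(e_H-1)(n-1)\binom{n-1}{k-1}+e_H a_{H,k}\bigr]$. Second, the duality of exponents and coexponents gives $\deg\theta_i+\deg\omega_{\sigma(i)}=h$ for the generalized Coxeter number $h=e_H|\A|$ under some pairing $\sigma$. Together with the explicit degrees of $\omega_I^{\curlywedgeup}$ from the Hartmann-Shepler twisted-wedge basis of invariant differential forms and the fact that the exterior derivative $d$ preserves total (polynomial plus $V^*$) degree, the sum $\sum_{\eta\in\B}\deg\eta$ telescopes to the predicted target.

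For independence over $\F(S)$, I would examine the coefficient matrix of the proposed collection against a standard $S$-basis of $\wedge^k V^*\ot V$. After localizing at $\F(S)$, the sets $\{\theta_j\}$ and $\{\omega_I^{\curlywedgeup}\}$ remain free bases (of the derivation module and of invariant differential forms, respectively), so the products $\omega_I^{\curlywedgeup}\theta_j$ span a rank-$n\binom{n}{k}$ free submodule away from the hyperplane locus. Removing $\omega_r\theta_r$ drops rank by one, and the $d\theta_E$-family supplies a complementary independent direction, yielding a block-triangular coefficient matrix whose determinant is a nonzero multiple of $Q^{\binom{n-1}{k}}Q_{\det}^{(n-1)\binom{n-1}{k-1}}Q_k$ — matching both the independence requirement and condition (b) simultaneously.

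The main obstacle is pinpointing the single linear relation among the $\omega_I^{\curlywedgeup}\theta_j$ that justifies removing exactly one $\omega_r\theta_r$ (for an arbitrary choice of $r$) and showing that the $d\theta_E$-family supplies precisely the missing direction. This stems from a tautological Euler-type identity linking $\sum_i\omega_i\theta_i$, the exterior derivatives $d\theta_i$, and the twisted wedge $\curlywedgeup$, and the freedom in $r$ corresponds to a one-dimensional invariant kernel that the $d\theta_E$'s fill regardless of which $r$ is deleted. The hypothesis $\characteristic \FF\neq 2$ is essential here, as the signs arising when $d$ commutes past $\curlywedgeup$ collapse in characteristic $2$ and break the cancellation needed for this replacement. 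Verifying the identity rigorously draws on the transvection analysis of \cref{appendix} and forms the technical heart of the argument.
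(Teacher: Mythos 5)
Your overall strategy---verify condition (c) of \cref{SaitoThmIntro} by checking $\F(S)$-independence and the degree sum rank by rank, using the single orbit of hyperplanes and the duality $m_i+m_i^*=e|\A|$---is the same as the paper's. But the step you yourself flag as ``the main obstacle'' is precisely the content of the proof, and what you say about it is not right. The identity you need is $\sum_{m=1}^n\om_m\theta_m\doteq Q^e\, d\theta_E$, and it comes neither from the transvection analysis of \cref{appendix} nor from any commutation of $d$ past $\curlywedge$: it is immediate linear algebra from the construction of the dual $1$-forms in \cref{dualderivations}, since $\coef(\om_1,\ldots,\om_n)=Q_{\det}\,C$ for $C$ the cofactor matrix of $A=\coef(\theta_1,\ldots,\theta_n)$ and $C^tA=(\det A)I\doteq Q\,I$. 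Because every $\om_m\theta_m$ appears in this identity with unit coefficient, $d\theta_E$ expands over $\F(S)$ in the independent set $\{\om_i\theta_j\}$ with nonzero coefficient on $\om_r\theta_r$ for every $r$, which is exactly what licenses swapping $\om_r\theta_r$ for $d\theta_E$ at rank $1$ while preserving independence (\cref{basisindependent}). Without this identity your ``block-triangular'' claim is unsupported: deleting one element and adjoining another does not preserve rank unless the new element has a nonzero component in the deleted direction.

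Second, your explanation of where $\characteristic\FF\neq 2$ enters is wrong. No signs ``collapse'' in the wedge manipulations; the identity above and the independence argument work verbatim in characteristic $2$. The hypothesis is needed because the target degree in \cref{saitotheorem}(c) depends on the integers $a_{H,k}$, which in turn depend on $\delta_H$ of \cref{deltaH}; one can have $\delta_H=1$ only in characteristic $2$ (a transvection equal to its inverse), and in that case the degree count, and indeed the basis, changes (see \cref{char2structure}, where an extra invariant $\eta_0$ enters). Finally, note that $\theta_E$ is the Euler derivation and $d\theta_E$ is a single rank-$1$ element, not a family indexed by subsets $E$; your phrase ``the level determined by $|E|$'' suggests a misreading that would wreck the cardinality count at every $k$.
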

\noindent 
We use the exterior derivative of the Euler derivation, $d\theta_E=\sum_{i=1}^n 1\otimes x_i\otimes v_i$,
dual 1-forms
$\omega_1,\dots,\omega_n$
constructed via an operator
related to the
Hodge dual
(see \cref{dualderivations}), 
and twisted wedge products $\omega_I^{\curlywedgeup}$ (see \cref{twistedwedgeproduct}).

\begin{example}
\label{Baby}
{\em 
For the reflection group $G=\langle\begin{smallpmatrix}
1&1\\0&1
\end{smallpmatrix}\rangle$ acting on $V=\FF^2$
with $\characteristic\FF=p>2$,
$$
\begin{aligned}
&\text{basic derivations}
&&\theta_1=1\ot v_1,
&&\hspace{-0.7in}\theta_2 = x_1\ot v_1 + x_2\ot v_2
&&\text{ generate } (S\ot V)^G \text{ and}
\\
&\text{dual $1$-forms} 
&&\omega_1=x_2\ot x_1 - x_1\ot x_2,
&&\omega_2=-1\ot x_2
&&\text{ generate } (S\ot V^*)^G
\end{aligned}
$$
as free $S^G$-modules. Then $(S\ot \wedge V^*\ot V)^G$ is a free $S^G$-module with basis
$$
\begin{aligned}
\theta_1&=1\ot 1\ot v_1, \ \  
\theta_2=x_1\ot 1\ot v_1 + x_2\ot 1\ot v_2,
\ \ 
d\theta_E=1\ot x_1\ot v_1 + 1\ot x_2\ot v_2,
\\
\omega_1\theta_1&=x_2\ot x_1\ot v_1 - x_1\ot x_2\ot v_1,\ \  
\omega_2\theta_1=-1\ot x_2\ot v_1,
\\
\omega_1\theta_2&=x_1x_2\ot x_1\ot v_1 + x_2^2\ot x_1\ot v_2 - x_1^2\ot x_2\ot v_1 - x_1x_2\ot x_2\ot v_2,\\
(\omega_1\curlywedge \omega_2)\theta_1&= -1\ot x_1\wedge x_2\ot v_1, %\text{ and}
\ \ 
(\omega_1\curlywedge\omega_2)\theta_2=-x_1\ot x_1\wedge x_2\ot v_1 - x_2\ot x_1\wedge x_2\ot v_2
.
\end{aligned}
$$
}\end{example}

%%%%%%%%%%%%%%%%%%%%%%%%%%%%%%%%%%%%%%%%%%%
\subsection*{Outline}

In \cref{background}, we recall various properties of reflection groups and hyperplane arrangements and relate derivations and differential forms to differential derivations.
We give Saito criteria for invariant derivations and 1-forms
in~\cref{saitocriterion}.
We then derive a Saito criterion for invariant differential derivations for all finite groups
using an extensive analysis of the actions of transvections in~\cref{appendix}.
In Sections \ref{maximalsection}--\ref{slglsection},
we focus on reflection groups whose transvection root spaces are maximal.
We show the hyperplanes all lie
in the same orbit, recall a twisted wedge product, and identify the semi-invariant differential forms in~\cref{maximalsection}.
In~\cref{dualizingsection}, we show how to construct a set of basic 1-forms when a set of basic derivations is known, and vice versa, demonstrating a duality of exponents and coexponents of the group.
The
structure of the set of invariant differential derivations when the characteristic of the base field is not $2$ is given in
\cref{structuresection}
whereas
\cref{char2} analyzes the characteristic $2$ case.
\cref{fpsection} considers groups acting on vector spaces over prime fields and
\cref{slglsection} considers 
$\SL_n(\FF_q)$, $\GL_n(\FF_q)$, 
and all groups in between.
\vspace{1ex}

%%%%%%%%%%%%%%%%%%%%%%%%%%%%%%%%%%%%%%%%%
\section{Background and Notation}
\label{background}

We fix a finite-dimensional vector space $V=\FF^n$ over a field $\FF$ of arbitrary characteristic, $n\geq 1$. 
Let $S:=S(V^*)$ be
the symmetric algebra of $V^*$
which we identify with the 
polynomial ring $\FF[V]=\FF[x_1,\dots,x_n]$ 
for a basis $x_1, \ldots, x_n$
of $V^*$.
We use $\F(S)$ for the fraction field of $S$.
Let $G\subset \GL(V)$ be a finite group acting on $V$
and consider
the usual dual action
on $V^*$
(given by the inverse transpose of the matrix recording
the action on $V$)
which extends 
to an action on $S$ by automorphisms.
We write 
$a \doteq b$ to indicate $a$ and $b$
are equal up to a scalar
in  $\FF^\times$.
Note that all tensor products are taken over $\FF$.

\subsection*{Invariants}
For any $\FF G$-module $M$, we write
$M^G$ for the invariants in $M$ and
\vspace{1ex}
$$M^G_{\chi}=\{m \in M: 
g(m)=\chi(g) m \text{ for all } g\in G\}
\qquad\text{
for the 
$\chi$-invariants},
\vspace{1ex}
$$
the space
of semi-invariants
with respect to a linear
character $\chi: G \rightarrow \FF^\times$
of $G$.
We write 
$\det={\det}_V:G\rightarrow \FF^\times$ 
for the determinant character
of $G$ acting on $V$.

The space
$S\ot M$ is an $S$-module through multiplication in the first tensor component.
Likewise,
the space of invariants
$(S\ot M)^G$
is an $S^G$-module,
necessarily
of rank
$\dim_{\FF}(M)$ (e.g., see~\cite{Brion} or \cite{BroerChuai}),
and we seek
$S^G$-module bases
when these invariant spaces are free.

%%%%%%%%%%%%%%%%%%%%%%%%%%%%%%

%%%%%%%%%%%%%%%%%%%%%%%%%%%%%%
\subsection*{Reflections}
Recall that a {\em reflection} on a vector space $V=\FF^n$ is a nonidentity invertible linear transformation that fixes pointwise a subspace of $V$ of codimension 1, called the {\em reflecting hyperplane} of the transformation. 
A {\em reflection group} is a group generated
by reflections, and we assume
all reflection groups are finite.
There are two types of reflections:  diagonalizable 
reflections and {\em transvections}
(nondiagonalizable).
Note that order$(s)$ and
$\characteristic\FF$
are coprime 
and 
$\det(s)$ lies in $\FF^{\cross}$
when $s$ is a diagonalizable reflection,
whereas 
order$(s)=\characteristic\FF$
and $\det(s)=1$
when $s$ is a transvection (see \cite{SmithBook}).

\subsection*{Reflection arrangement of a finite group}
We say a hyperplane $H$ in $V$
is a {\em reflecting hyperplane
of $G$} when there is some
reflection in $G$ about $H$.
We denote
the (possibly empty) collection of all reflecting hyperplanes of $G$
by $\A=\A(G)$ and note that $\A(G)=\A(W)$ for
$W$ the subgroup of $G$ generated by the reflections in $G$.

\subsection*{Pointwise stabilizers
of reflecting hyperplanes}
We denote the pointwise stabilizer
of each reflecting hyperplane $H$
of $G$ by
$G_H=\{g\in G: g|_H=1\}$. 
The transvections in $G_H$ along  with the identity form a normal subgroup $K_H$
of $G$:
$$
K_H=\ker\big(\det:G_H\rightarrow\FF^\times\, \big)
\, . 
$$
We set $e_H=|G_H:K_H|$
and observe that
$G_H=\langle K_H, s_H \rangle
\cong K_H\rtimes \ZZ/e_H\ZZ
\, ,
$ where
$s_H$ is 
a diagonalizable reflection in $G$
about $H$ of maximal order $e_H$ when $e_H\neq 1$
and $s_H=1_G$ when $e_H=1$.

%%%%%%%%%%%%%%%%%%%%%%%%%%%%%%%%%%%%%%%%%
\subsection*{Root vectors}
For each reflecting hyperplane $H$ of $G$, we fix a linear form $\ell_H$ in $V^*$ with  $\ker \ell_H=H$.  Each reflection $s$ in $G$ about $H$ is then
defined by its {\em root vector} $v_s$ 
spanning $\text{Im}(s-1)\subset V$ with respect to $\ell_H$,
see \cite{SmithBook}:
$$
s(v)=v+\ell_H(v)v_s \ \ \text{for all }v\text{ in }V.
$$ 
Note that a reflection $s$ about $H$
is a transvection 
exactly when
its root vector $v_s$ lies {\em in} $H$.

%%%%%%%%%%%%%%%%%%%%%%%%%%%%%%%%%%%%%%%%%
\subsection*{Root spaces}	
The {\em root space} $\R_H$
of a reflecting hyperplane $H$ of $G$ 
is the
$\FF$-vector space spanned by all 
of the root
vectors of the reflections 
in $G$
about $H$.
The {\em transvection root space} of $H$ (see~\cite{HartmannShepler})
is 
the space $\R_H\cap H$
spanned by the root vectors of the transvections in $G$ 
about $H$.
We denote its dimension by $b_H$:
$$
b_H :=\dim_\FF( \R_H\cap H)
=
\dim_\FF\FF\text{-span}\{
v_s: s \text{ is a transvection in $G$ about } H\}
\, .
$$
If the transvection root space of $H$ is all of $H$, i.e., $\R_H\ \cap H= H$, then $b_H=n-1$
and we say the transvection root space is {\em maximal}.
Often all of the transvection root spaces for $G$ are maximal, as is the case,
for example, 
when $\SL_n(\FF_q)\subset G \subset
\GL_n(\FF_q)$.

%%%%%%%%%%%%%%%%%%%%%%%%%%%%%%%%%%%%%%%%%
\subsection*{Arrangement polynomials}
We consider the arrangement-defining polynomial $Q$ in $S$
and polynomials $Q_{\det}$
and $Q(\tilde{\A})$ 
(see \cite{HartmannShepler})
which vanish on 
some reflecting hyperplanes
or are $1$:
$$
Q:=\prod_{H\in\A}\ell_H\, ,
\quad
Q_{\det}:=\prod_{H\in\A}\ell_H^{{}_{}\, e_H-1}\, ,
\quad
\text{and}
\quad
Q(\tilde{\A}):=\prod_{H\in\A}\ell_H^{{}_{}\, e_Hb_H}\, .
$$
These polynomials depend only
upon $G$ up to a
scalar in $\FF^\times$.
Recall 
that 
$Q_{\det}$
divides any
polynomial that is
semi-invariant
with respect
to the linear character $\det=\det_V:W\rightarrow \FF^\times$
of the
subgroup $W$
generated by the reflections in $G$.
In fact (see~\cref{SemiinavariantPolys}, \cite{Stanley}, \cite{Nakajima}, \cite{Smith2006}), 
for 
$Q_{\det^{-1}}=\prod_{H\in\A:e_H\neq 1}\ell_H$,
\begin{equation}
\label{Qdet}
S^W_{\det}=Q_{\det} \, S^W
\quad\text{ and }\ \ 
S^W_{\det^{-1}}=Q_{\det^{-1}} \, S^W
\, .
\end{equation}
%%%%%%%%%%%%%%%%%%%%%%%%%%%%%%%%%%%%%%%%%

\subsection*{Vector space basis
for one hyperplane}
For any reflecting hyperplane $H$ of $G$, we may choose a convenient
basis $v_1,\dots,v_n$ of $V$ with dual basis $x_1,\dots,x_n$ of $V^*$ so that $v_1,\dots,v_{n-1}$ span $H$ and $\ell_H=x_n$. 
In fact, we may choose $v_1,\dots,v_{b_H}$ to be root vectors of transvections $t_1,\dots,t_{b_H}$ in $G$ about $H$
and $v_n\notin H$ to be an eigenvector of $s_H$ with eigenvalue $\lambda\in \FF^\times$
of order $e_H$.
With respect to this basis,
\vspace{1ex}
\begin{equation}
\label{ConvenientBasis}
t_m=\begin{smallpmatrix}
1\vspace{-0.5em}\\
&\ddots\\
&&1&&1\vspace{-0.5em}\\
&&&\ddots\\
&&&&1
\end{smallpmatrix}\leftarrow 
\text{\begin{tiny} $m^{\text{th}}$ row
\end{tiny}}
\quad\text{for }1\leq m\leq b_H\quad\text{ and}
\quad
s_H=\begin{smallpmatrix}
1\vspace{-0.5em}\\
&\ddots\\
&&1\\
&&&\lambda
\end{smallpmatrix}
\vspace{1ex}
\, .
\end{equation} 
Note that $e_H=1$,
$s_H=1_G$, and $\lambda=1$
when $G$ contains no diagonalizable reflections about $H$. 
When $\FF=\FF_p$, $G_H$ is precisely $\langle t_1,\dots,t_{b_H},s_H\rangle$, so $|G_H|=e_H\, p^{b_H}$. In general, however, $G_H$ may contain more transvections about $H$  (see Lemma 2.1 in \cite{Jacobians}).

%%%%%%%%%%%%%%%%%%%%%%%%%%%%%%%%%%%%%%%%%

\begin{example}
\label{2x2example}
{\em
Let $V=\FF_5^2$ with standard basis $v_1,v_2$ and dual basis $x_1,x_2$ of $V^*$. Consider the group $G=\langle t, s, g\rangle$ generated by
$$
t=\begin{smallpmatrix}
1&1\\0&1
\end{smallpmatrix},\quad
s=\begin{smallpmatrix}
1&\ 0\\0&-1
\end{smallpmatrix},\quad
\text{ and }
\quad
g=\begin{smallpmatrix}
-1&\ 0\\
\ 0&2
\end{smallpmatrix}
\, .
$$ 
The subgroup of $G$ generated by its reflections is
$W=\langle t,s\rangle$, which fixes a single hyperplane $H=\ker x_2$.
%so $Q=x_2$.
The transvection root space of $H$ has dimension
$b_H=1$ (i.e., $\mathcal{R}_H \cap H = H$)
and the maximal order of a diagonalizable reflection is $e_H=2$. So $Q=Q_{\det}=x_2$ and $Q(\tilde{\A})=x_2^2$.
}
\end{example}

%%%%%%%%%%%%%%%%%%%%%%%%%%%%%%%%%%%%%

\subsection*{Derivations and differential forms}
We identify
the set of
$S$-derivations $\text{Der}_S$ on $V$
with $S\ot V$,
identify
the set of
differential forms on $V$
with  $S\ot \wedge V^*$,
and consider
the $S$-module $S\otimes \wedge V^*\otimes V$
of {\em differential derivations} 
on $V$
(otherwise called
{\em mixed forms}, see~\cite{ReinerShepler}):
$$
\begin{aligned}
&S\ot V \,   && \text{ (\em derivations)},
\\
&S\ot \wedge V^* 
\,  &&  \text{ (\em differential forms)},
\\
&S\otimes \wedge V^*\otimes V
\,  && \text{ ({\em differential derivations})}
\, .
\end{aligned}
$$
%$$
%\begin{aligned}
%&S\ot V  
%\text{ (\em derivations)},
%\ \ S\ot \wedge V^* 
%\text{ (\em differential forms)},\\
%&S\otimes \wedge V^*\otimes V
%\text{ ({\em differential derivations})}
%\, .
%\end{aligned}
%$$
Consider a basis $v_1,\ldots, v_n$
of $V$ with dual basis
$x_1,\ldots, x_n$ of $V^*$ 
and
a set
$\omega_1,\dots,\omega_n \in S\ot V^*$.
For any
subset $I=\{i_1,\dots,i_k\}$ 
of $[n]=\{1, \ldots, n\}$
with
$i_1<\ldots< i_k$,
we set
\begin{equation}
\label{volumes}
\begin{aligned}
v_I&:=v_{i_1}\wedge\cdots\wedge v_{i_k}
\in \wedge^k V\, ,
\\
x_I&:=x_{i_1}\wedge\cdots\wedge x_{i_k}
\in \Wedge^k V^*\, , 
\\
\omega_I&:=\omega_{i_1}\wedge\cdots\wedge\omega_{i_k}
\in S\ot \Wedge^k V^*\, ,
\end{aligned}
\end{equation}
with
$v_I=1$, 
$x_I=1$,
and $\om_I=1\ot 1$ for
the empty set $I=\varnothing$. 
To indicate subsets of size $k$,
we write $I\in\tbinom{[n]}{k}$
for $I\subset [n]$ with $|I|=k$.
We
denote the volume form on $V$ by
$\volv=v_1\wedge\cdots\wedge v_n\in\wedge^n V$
and the volume form on $V^*$
by $\volx=x_1\wedge\cdots\wedge x_n\in\wedge^n V^*$.
%%%%%%%%%%%%%%%%%%%%%%%%%%%%%%%%%%%%%

\subsection*{Differential derivations
as a module over the differential forms}
We view the set of differential derivations 
$S\ot \wedge V^* \ot V$
as a module over the set of differential forms
$S\otimes \wedge V^*$ via multiplication in the first two tensor components: 
for $f,f'$ in $S$, $x_I,x'_I$ in $\wedge V^*$, and $v\in V$,
\begin{equation}
\label{modulestructure}    
(f\otimes x_I)(f'\otimes x_I'\otimes v):=
ff'\otimes x_I\wedge x_I'\otimes v.
\end{equation}

%%%%%%%%%%%%%%%%%%%%%%%%%
\subsection*{Embedding derivations into the
differential derivations}
We embed the set of derivations
into the set
of differential derivations:
$$
    S\otimes V
    \hookrightarrow S\otimes \wedge V^*\otimes V,\quad\quad
    f\otimes v \mapsto  f\otimes 1 \otimes v.
$$
This embedding together with the module structure of  \cref{modulestructure}
allows us to
multiply a differential form
and a derivation 
to construct
a differential derivation,
with the $G$-action preserved:
\vspace{-1.5ex}
$$
%\begin{equation}
%\label{EmbedModuleStructure}
\begin{aligned}
(S\ot \wedge V^*)\cross (S\ot V)
&\longrightarrow\ S\ot \wedge V^*\ot V,
\\
(f\otimes x_I)\ 
\cross
(f'\otimes v)
&\longmapsto\ 
ff'\otimes x_I \otimes v\, .
\end{aligned}
%\end{equation}
$$
%%%%%%%%%%%%%%%%%%%%%%%%%%%%%%%%%%%%%%%%%

\subsection*{Degree and rank}
We assign $\deg x_i=1$
for all $i$
so that
$S=\bigoplus_i S_i$  is graded
by the usual polynomial degree
and $G$ acts by graded automorphisms.
For any $\FF G$-module $M$,
we say the elements of $S_i\ot M$
are {\em homogeneous of polynomial degree} $i$.
We say elements in 
$S\ot \Wedge^kV^*$
and in 
$S\ot \Wedge^k V^* \ot V$
have {\em rank} $k$.
Thus $1$-forms are differential forms
of rank $1$, and
for homogeneous $f$ in $S$,
$I\subset [n]$,
and $v\in V$,
$$\deg(f\otimes x_I\otimes v)=\deg(f)\quad\text{ and }\quad \rank(f\otimes x_I\otimes v)=\rank(x_I)=|I|
\, .
$$ 
For an $\FF G$-module $M$, one may choose a homogeneous
basis of the $S^G$-module $(S\ot M)^G$ 
when free
by the graded Nakayama Lemma
(e.g., see~
\cite[Proposition A.20]{OrlikTerao},
\cite[Corollary 5.2.5]{SmithBook},
or 
\cite[Section 2.10]{CampbellWehlau}) and
polynomial
degrees of basis elements are independent of choice.

%%%%%%%%%%%%%%%%%%%%%%%%%
\subsection*{Euler derivation}
Recall that the {\em Euler derivation}
$\theta_E:=\sum_{i=1}^n x_i\otimes v_i$
is invariant under any
linear group action.
%$G$ acting linearly.
We use the invariant differential derivation $d\theta_E$ (see \cite{ReinerShepler}):
$$
d\theta_E
=1\otimes x_1\otimes v_1+\dots+1\otimes x_n\otimes v_n
\in (S\otimes V^*\otimes V)^G
\, .
$$

%%%%%%%%%%%%%%%%%%%%%%%%%%%%%%%%%%%%%%%%

\subsection*{Coefficient matrix}
For any
$\FF G$-module $M$, 
we define
the coefficient matrix
of $\omega_1,\dots,\omega_\ell$ 
in $S\otimes M$ 
with respect to an
ordered basis $z_1,\dots,z_m$ of $M$ as usual by
$$
\coef(\omega_1,\dots,\omega_\ell)
:=
\{ f_{ij} \}_{\substack{1\leq i\leq \ell \\ 1\leq j \leq m\rule{0ex}{1.5ex} }} 
\in M_{\ell\times m}(S)
\, ,
\qquad
\text{where }  
\omega_i
=
\sum_{j=1}^{m} f_{ij}\ot z_j
\text{ for } 1\leq i\leq \ell\, .
$$
For any unordered set $\B\subset S\ot M$ with $|\B|=m$,
the determinant 
$\det\coef(\B)$ 
is defined up to a sign
and is nonzero
precisely when $\B$ is
independent over $\F(S)$.  
Notice that
the coefficient vector
of a
differential derivation 
arising as the product of a differential form 
and a derivation
is just the tensor product
of the respective coefficient vectors:
for any $\omega=\sum_{I}f_I\otimes x_I$ in $S\ot \wedge^k V^*$
and $\theta=\sum_{j=1}^n f'_j \ot v_j$ in $S\ot V$,
$$
\omega\theta
=
\sum_{I,j}f_I f'_j\otimes x_I\otimes v_j
\quad\text{ with }\ 
\coef(\omega\theta)
=
\coef(\omega)\ot \coef(\theta)
=
\begin{pmatrix}
f_{I_1},\dots, f_{I_{m}}
\end{pmatrix}
\otimes
\begin{pmatrix}
f'_1,\dots, f'_n
\end{pmatrix}
$$
with respect to
a fixed ordered basis
$x_I \ot v_j$
of $\wedge^k V^*\ot V$
arising from ordered bases
$v_1, \ldots, v_n$ of $V$
and $x_{I_1},\ldots, x_{I_m}$
of $\wedge^k V^*$
with $m=\tbinom{n}{k}$.
This extends to subsets of differential forms $\B\subset S\ot \Wedge V^*$ and derivations $\B'
\subset S\ot V$:
with the appropriate orderings,
$$
\coef(\omega\theta:\omega\in\B,\theta\in\B')
=
\coef(\B)\otimes \coef(\B')
\, .
$$
This fact immediately implies the following observation
since
$\{ \om_I: I \subset 
[n] \}$
is independent over $\F(S)$
whenever 
$\{\om_1,\ldots, \om_n\}
\subset S\ot V^*$
is independent.

%%%%%%%%%%%%%%%%%%%%%
\vspace{1ex}
\begin{lemma}
\label{wholesetindependent}
If 
$\{\theta_1,\dots,\theta_n\}\subset S\ot V$
and 
$\{\omega_1,\dots,\omega_n\}
\subset S\ot V^*$
are both 
$\F(S)$-independent, then so is
$$
\big\{\omega_I\, \theta_j:
I\in\tbinom{[n]}{k},
1\leq j\leq n\big\}\quad
\subset
S\ot \Wedge^k V^* \ot V
\, .
$$
\end{lemma}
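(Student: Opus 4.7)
The approach will be to pass to the fraction field $\F(S)$ and reduce the claim to standard multilinear algebra, using the Kronecker product identity for coefficient matrices noted just before the lemma.

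First, I would observe that after extending scalars from $S$ to $\F(S)$, the vector space $\F(S) \otimes_\FF V^*$ has $\F(S)$-dimension $n$. Since $\omega_1, \dots, \omega_n$ are independent over $\F(S)$ by hypothesis, they form an $\F(S)$-basis of this space. Standard exterior algebra then gives that $\{\omega_I : I \in \tbinom{[n]}{k}\}$ is an $\F(S)$-basis of $\F(S) \otimes_\FF \Wedge^k V^*$, and in particular is independent over $\F(S)$. Concretely, if $A = \coef(\omega_1, \dots, \omega_n)$ is the $n \times n$ coefficient matrix with respect to the basis $x_1, \dots, x_n$ of $V^*$, then $\det A \neq 0$ in $\F(S)$, and the coefficient matrix of $\{\omega_I\}$ with respect to $\{x_I\}$ is the $k$th compound matrix $\Wedge^k A$, whose determinant is $(\det A)^{\binom{n-1}{k-1}} \neq 0$.

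Next, let $B = \coef(\theta_1, \dots, \theta_n)$, an $n \times n$ matrix over $S$ with $\det B \neq 0$ by hypothesis. By the observation immediately preceding the lemma, the coefficient matrix of the set $\{\omega_I \theta_j : I \in \tbinom{[n]}{k},\, 1 \leq j \leq n\}$ with respect to the ordered basis $\{x_I \otimes v_j\}$ of $\Wedge^k V^* \otimes V$ is the Kronecker product $(\Wedge^k A) \otimes B$. The determinant of a Kronecker product of a $p \times p$ matrix $M$ with a $q \times q$ matrix $N$ is $(\det M)^q (\det N)^p$, so
\[
\det\coef\bigl(\omega_I \theta_j\bigr)
\doteq (\det A)^{n\binom{n-1}{k-1}} (\det B)^{\binom{n}{k}} \neq 0
\text{ in } \F(S).
\]
This determinant being nonzero is precisely the condition that the $n\tbinom{n}{k}$ elements $\omega_I \theta_j$ are $\F(S)$-independent, finishing the proof.

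There is no real obstacle here: the only substantive content is the passage from independence of $\omega_1, \dots, \omega_n$ to independence of the wedge products $\omega_I$, which is immediate once we recognize we are working over a field and can invoke the standard basis property of exterior powers. The rest is bookkeeping via the Kronecker product identity already stated in the paper.
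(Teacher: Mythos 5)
Your proof is correct and follows essentially the same route as the paper, which derives the lemma directly from the preceding observation that $\coef(\omega\theta)=\coef(\omega)\otimes\coef(\theta)$ together with the fact that the $\omega_I$ inherit independence from the $\omega_i$. You simply make explicit the compound-matrix and Kronecker-product determinant identities that the paper leaves implicit.
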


%%%%%%%%%%%%%%%%%%%%%%%%%%%%%%%%%%%%%%%%%
%%%%%%%%%%%%%%%%%%%%%%%%%%%%%%%%%%%%%%%%%
\section{Saito/Jacobian Criterion}
\label{saitocriterion}

%%%%%%%%%%%%%%%%%%%%%%%%%%%%%%%%%%%%%%%%%%%

We consider a finite group $G\subset \GL(V)$
acting on $V=\FF^n$.
We give criteria for finding $S^G$-bases
of invariant derivations $(S\ot V)^G$ and invariant $1$-forms $(S\ot V^*)^G$
before examining invariant differential derivations
$(S\ot \Wedge V^*\ot V)^G$.

\vspace{1ex}
\subsection*{Solomon's Theorem}
Solomon \cite{Solomon} showed that when $G$ is a
reflection
group acting on $V=\CC^n$,
the set of invariant
differential forms
$(S\otimes\wedge V^*)^G$ is a free exterior algebra over $S^G$ generated by $df_1,
\ldots, df_n$
for any polynomials
$f_1, \ldots, f_n$ with
$S^G=\CC[f_1, \ldots, f_n]$:
$$
(S\otimes\wedge V^*)^G=\bigwedge{}_{\rule[0ex]{0ex}{2.2ex} S^G}\{\omega_1,\dots,\omega_n\}
\quad\text{with }\omega_i=df_i \text { for }
1\leq i\leq n
\, .
$$
For a reflection group $G$
acting on $V=\FF^n$ with
$\characteristic \FF$
dividing $|G|$,
the ring of invariant polynomials
$S^G$ may not be a polynomial algebra.
Even when 
$S^G=\FF[f_1, \ldots, f_n]$ for
some
$f_i$ in $S^G$ 
(i.e.,
the action is
{\em coregular}, see \cite{Broer2010}), the
exterior derivatives
$df_i$ do not generate $(S\otimes \wedge V^*)^G$ as an exterior algebra
when $G$ contains
transvections:
Hartmann \cite{Hartmann} showed that the conclusion of Solomon’s Theorem holds if and only if $S^G$ is a polynomial algebra and $G$ contains no transvections.

%%%%%%%%%%%%%%%%%%%%%%%%%%%%%%%%

\subsection*{Saito criteria for invariant
derivations and $\mathbf{1}$-forms}

Criteria for finding bases of invariant derivations and
invariant $1$-forms
under the action of a finite linear group $G$ relies on the pointwise
stabilizer subgroups $G_H$ of each of the reflecting hyperplanes $H\in \A$ of $G$.
Thus we begin with groups fixing a single hyperplane; a more general criteria will follow from  \cite[Theorem~3]{BroerChuai}.

We use the $1$-forms from \cite[Remark~13]{HartmannShepler}
and provide a short direct proof for derivations.
For each reflecting hyperplane $H$ of $G$, recall that $e_H$ is the maximal
order of a diagonalizable reflection in $G$ about $H$ (or $e_H=1$ if none exist)
and $b_H$ is the dimension of the transvection
root space of $H$.
We consider the exterior product of derivations
$\theta_1\wedge\cdots\wedge\theta_n$
in $S\ot \Wedge ^n V$
and of 1-forms $\omega_1\wedge\cdots\wedge\omega_n$ in $S\ot \Wedge^n V^*$.
%%%%%%%%%%%%%%%%%%%%%%%%%%%%%
\vspace{2ex}
\begin{lemma}
\label{BasicDerivationsOneHyperplane}
Suppose a nontrivial
finite
group $G\subset \GL(V)$ 
fixes pointwise
a hyperplane $H=\ker\ell_H$ in $V=\FF^n$.
Then
$(S\ot V)^G$ and $(S\ot V^*)^G$ are free $S^G$-modules, and 
for any
$\theta_1,\ldots, \theta_n$
in $(S\otimes V)^{G}$
and any $\om_1,\ldots, \om_n$
in $(S\otimes V^*)^{G}$,
\begin{itemize}\setlength{\itemindent}{-4ex}%item identation
    \item 
$\theta_1,\ldots, \theta_n$\hphantom{.} are an $S^G$-basis of $(S\otimes V)^{G}$\hphantom{x}
if and only if
$
\theta_1\wedge\cdots\wedge \theta_n\doteq\ell_H\, \volv
$, and %\hphantom{x}
\item
$\om_1,\ldots, \om_n$
are an $S^G$-basis
of $(S\otimes V^*)^{G}$
if and only if
$\om_1\wedge\cdots\wedge \om_n
\doteq\ell_H^{e_Hb_H + e_H-1}\, \volx$.
\end{itemize}

\end{lemma}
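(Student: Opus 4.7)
The plan is to argue the two equivalences in parallel via the standard Saito-style principle: once an $S^G$-basis of the invariant module is in hand with its coefficient determinant computed, any other $n$-tuple of invariants is itself a basis iff its coefficient determinant agrees up to a scalar in $\FF^\times$. For the hyperplane stabilizer $G=G_H$, the ring $S^G$ is a polynomial ring (classical for groups fixing a hyperplane pointwise), so units in $S^G$ are exactly $\FF^\times$.

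For $1$-forms I would invoke \cite[Remark 13]{HartmannShepler}, which produces an explicit $S^G$-basis of $(S\otimes V^*)^G$ with exterior product $\doteq\ell_H^{e_Hb_H+e_H-1}\volx$; this simultaneously establishes freeness and identifies the reference determinant in the $1$-form case.

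For derivations I would argue directly, using the convenient basis of \cref{ConvenientBasis} with $\ell_H=x_n$ and $v_1,\ldots,v_{n-1}$ spanning $H$. Since $G$ fixes $H$ pointwise, each $v_i$ with $i<n$ is $G$-fixed, so $1\otimes v_1,\ldots,1\otimes v_{n-1}$ are invariant; together with $\theta_E$ they give $n$ invariant derivations whose coefficient matrix is lower-triangular with determinant $\doteq x_n$, hence exterior product $\doteq\ell_H\volv$. To see these generate $(S\otimes V)^G$, take an arbitrary invariant $\theta=\sum_i f_i\otimes v_i$: the transvection relations in $K_H$ together with the semi-invariance $s_H(f_n)=\lambda^{-1}f_n$ force $\ell_H\mid f_n$, and writing $f_n=\ell_H h$ one checks $h\in S^G$, so that the residual $\theta-h\theta_E$ has components only along $v_1,\ldots,v_{n-1}$ whose coefficients must lie in $S^G$ (since each $v_i$ with $i<n$ is $G$-fixed). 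This expresses $\theta$ uniquely as an $S^G$-combination of the candidate basis, giving freeness with the computed coefficient determinant.

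Given reference bases for both modules, the biconditional reduces to standard linear algebra. For any $\{\theta_1,\ldots,\theta_n\}\subset(S\otimes V)^G$, write $\theta_j=\sum_i a_{ij}\eta_i$ with $a_{ij}\in S^G$ and $\{\eta_i\}$ the reference basis; then $\theta_1\wedge\cdots\wedge\theta_n=\det(a_{ij})\cdot\eta_1\wedge\cdots\wedge\eta_n\doteq\det(a_{ij})\cdot\ell_H\volv$. The tuple is an $S^G$-basis iff $(a_{ij})$ is invertible over $S^G$, iff $\det(a_{ij})\in\FF^\times$, iff $\theta_1\wedge\cdots\wedge\theta_n\doteq\ell_H\volv$; the identical argument against the $1$-form reference basis yields the second equivalence. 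The main obstacle is the divisibility step $\ell_H\mid f_n$ in the derivation case, which requires a careful analysis of how the transvections in $K_H$ act on the $v_n$-component (via a shift-operator argument) combined with $s_H$-semi-invariance when $e_H\neq 1$.
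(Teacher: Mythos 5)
Your proposal is correct and follows essentially the same route as the paper: the same explicit reference bases ($1\otimes v_1,\dots,1\otimes v_{n-1}$ together with the Euler derivation for $(S\otimes V)^G$, and the $1$-forms of \cite[Remark~13]{HartmannShepler} for $(S\otimes V^*)^G$), and the same divisibility-plus-subtraction argument to show the candidate derivations generate. The only difference is that you close the biconditional with the elementary change-of-basis determinant argument over the graded connected ring $S^G$ (where polynomiality of $S^G$ is not actually needed --- units being $\FF^\times$ already follows from connectedness), whereas the paper delegates this step to \cite[Theorem~3]{BroerChuai}; the two are interchangeable here.
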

%%%%%%%%%%%%%%%%%%%%%%%%%%%%%%%%%%%%%%%%%%%%
\begin{proof}
We exhibit an explicit 
 $S^G$-basis $\theta_1, \ldots, \theta_n$ of $(S\ot V)^G$ with
$\theta_1\wedge\cdots\wedge \theta_n\doteq\ell_H\, \volv$ and
an explicit 
 $S^G$-basis $\om_1, \ldots, \om_n$ of $(S\ot V^*)^G$ with
$\om_1\wedge\cdots\wedge \om_n
\doteq\ell_H^{e_Hb_H + e_H-1}\volx$.
The result then follows from \cite[Theorem~3]{BroerChuai}
(see also \cite[Proposition~6]{BroerChuai}).
We use
the basis
$v_1, \ldots, v_n$ 
of $V$ 
with dual basis $x_1, \ldots, x_n$ of $V^*$
of \cref{ConvenientBasis},
so $\ell_H=x_n$, 
and consider the invariants
$$
\theta_i=
\begin{cases}
1\ot v_i
&\text{for }1\leq i<n,\\
\sum_{j=1}^nx_j\otimes v_j
&\text{for } i=n,
\end{cases}
\ \ \ 
\text{and}\ 
\
\omega_i=
\begin{cases}
x_n^{e_H
}\otimes
x_i-x_ix_n^{e_H-1}\otimes x_n
&\text{for }1\leq i\leq b_H,\\
1\otimes x_i
&\text{for }b_H< i
<n, \\
x_n^{e_H-1}\otimes x_n
&\text{for } i=n
\, .
\end{cases}
$$
Then
$\omega_1\wedge\cdots\wedge\omega_n=x_n^{e_Hb_H+e_H-1}
\, \volx$,
and the $\om_i$
are an $S^G$-basis
of $(S\ot V^*)^G$
by~\cite{HartmannShepler}.

Fix some $\theta=\sum_i h_i\ot v_i$ in $(S\ot V)^G$.  For
$g\neq 1_G$ in $G$ with root vector 
$(\alpha_1,\ldots, \alpha_n)$,
so $g(v_n)=\sum_{i\neq n}
\alpha_i v_i + (1+\alpha_n) v_n$
with $\alpha_n\neq -1$,
we equate polynomial coefficients 
of $\theta$ with those of
$g(\theta)$ 
and conclude that
$$
g(h_i)=
\begin{cases}
h_i-\frac{\alpha_i}{1+\alpha_n}\, h_n
\rule[-2ex]{0ex}{2ex}
&\text{ for } i\neq n\, ,\\
\frac{1}{1+\alpha_n}\, h_n
&\text{ for } i=n\, ,
\end{cases}
\qquad\text{ while }\quad
g(x_i)=
\begin{cases}
x_i-\frac{\alpha_i}{1+\alpha_n}\, x_n
\rule[-2ex]{0ex}{2ex}
&\text{ for } i\neq n\, ,\\
\frac{1}{1+\alpha_n}\, x_n
&\text{ for } i=n\, .
\end{cases}
$$
Note that $\alpha_j\neq 0$
for some $j$ (as $g\neq 1_G$), so  
$h_n\doteq g(h_j)-h_j$
is divisible by $x_n$ 
(see~\cref{sf-f}).
Also,
$g$ fixes
$\frac{h_n}{x_n}$
and
$h_i-\frac{h_n}{x_n}x_i$ for $i\neq n$.
As $g$ was arbitrary,
these polynomials lie in $S^G$,
and 
$$\theta=\tfrac{h_n}{x_n}\theta_n+\sum_{i\neq n} (h_i-\tfrac{h_n}{x_n}x_i)\theta_i$$
lies in the $S^G$-span of the $\theta_i$.
As $\theta_1\wedge\cdots\wedge\theta_n=x_n \volv\neq 0$, 
the $\theta_i$ 
are independent over $\F(S)$,
and thus over $S^G$, and are an $S^G$-basis of $(S\ot V)^G$.
\end{proof}

\vspace{1ex}
\cref{BasicDerivationsOneHyperplane} 
together with \cite[Theorem~3]{BroerChuai}
implies
the following analog
of the classical Saito Criterion
(see \cite[Corollary 6.61, Proposition 6.47]{OrlikTerao})
for all finite linear groups, including those with transvections.

\vspace{2ex}
%%%%%%%%%%%%%%%%%%%%
\begin{theorem}
\label{saitoderivations} 
Consider a finite group $G\subset \GL(V)$ acting on $V=\FF^n$.
\\
For homogeneous
$\theta_1,\ldots, \theta_n$ 
in $(S\otimes V)^G$, 
the following
are equivalent:
\begin{itemize}
\setlength{\itemindent}{0ex}%item identation
\item [a)]
$(S\otimes V)^G$ is a free $S^G$-module
with basis $\theta_1,\ldots, \theta_n$.
\item [b)] $\theta_1\wedge\cdots\wedge\theta_n\doteq Q\ \volv$.
\item [c)] $\theta_1,\dots,\theta_n$ are independent over $\F(S)$ and 
$\sum_{i=1}^n\deg\theta_i=\deg Q
=|\A|$.
\end{itemize}
For homogeneous
$\omega_1,\ldots, \om_n$ 
in $(S\otimes V^*)^G$, 
the following are equivalent:
\begin{itemize}
\setlength{\itemindent}{0ex}%item identation
\item [a)] 
$(S\otimes V^*)^G$  
is a free $S^G$-module
with basis
$\omega_1,\ldots, \om_n$.

\item [b)] $\omega_1\wedge\cdots\wedge\omega_n\doteq Q(\tilde{\A})Q_{\det}\, \volx$.

\item [c)] $\omega_1,\dots,\omega_n$ are 
independent over $\F(S)$ and 
$
 \sum_{i=1}^n\deg\omega_i=\sum_{H\in\A}\big(e_Hb_H+e_H-1\big)$.
\end{itemize}
\end{theorem}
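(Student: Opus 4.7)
The plan is to combine \cref{BasicDerivationsOneHyperplane} (the single-hyperplane case) with Broer and Chuai's Jacobian criterion \cite[Theorem~3]{BroerChuai}, which reduces the freeness question for $(S\otimes M)^G$ to two pieces of data: an $S^{G_H}$-basis of the local module $(S\otimes M)^{G_H}$ at each reflecting hyperplane $H\in\A$, and a global coefficient-determinant condition tying them together. The local data is precisely what \cref{BasicDerivationsOneHyperplane} provides in both cases $M=V$ and $M=V^*$, and so the proof of \cref{saitoderivations} amounts to summing/multiplying the local contributions and doing the degree bookkeeping.

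For the derivation case $M=V$, \cref{BasicDerivationsOneHyperplane} says that any $S^{G_H}$-basis of $(S\otimes V)^{G_H}$ has wedge product $\doteq\ell_H\,\volv$. Broer--Chuai then assembles these local factors into the requirement that the coefficient determinant of an $S^G$-basis of $(S\otimes V)^G$ equals $\prod_{H\in\A}\ell_H=Q$ up to scalar, giving (a)$\Leftrightarrow$(b). For (b)$\Leftrightarrow$(c), write $\theta_1\wedge\cdots\wedge\theta_n=D\,\volv$ for some homogeneous $D\in S$ with $\deg D=\sum_i\deg\theta_i$; $\F(S)$-independence is equivalent to $D\neq0$. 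Invariance of the $\theta_i$ combined with the local analysis (comparing $\{\theta_i\}$ to a local $S^{G_H}$-basis after localizing at $(\ell_H)$) forces each $\ell_H$ to divide $D$, hence $Q\mid D$. The degree match $\sum_i\deg\theta_i=|\A|=\deg Q$ then upgrades this divisibility to $D\doteq Q$.

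The 1-form case is parallel: \cref{BasicDerivationsOneHyperplane} identifies the local wedge-determinant at $H$ as $\ell_H^{e_Hb_H+e_H-1}$, with transvections contributing $\ell_H^{e_Hb_H}$ and the diagonalizable part contributing $\ell_H^{e_H-1}$. Multiplying over hyperplanes yields
\[
\prod_{H\in\A}\ell_H^{e_Hb_H}\cdot\prod_{H\in\A}\ell_H^{e_H-1}\;=\;Q(\tilde{\A})\,Q_{\det}\, ,
\]
which is the expression in (b). Its degree is $\sum_{H\in\A}(e_Hb_H+e_H-1)$, and the same divisibility-plus-degree argument as before gives (b)$\Leftrightarrow$(c). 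The main obstacle is not the formal deduction but rather ensuring that the local factor in the 1-form case is exactly $\ell_H^{e_Hb_H+e_H-1}$ (neither higher nor lower); this sensitive exponent bookkeeping — capturing the subtle interaction between transvections and a diagonalizable reflection of order $e_H$ — is exactly what \cref{BasicDerivationsOneHyperplane} has already pinned down, so the Broer--Chuai machinery can be invoked cleanly as a black box.
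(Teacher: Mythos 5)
Your proposal is correct and follows exactly the route the paper takes: the paper derives \cref{saitoderivations} by combining the single-hyperplane computation of \cref{BasicDerivationsOneHyperplane} with \cite[Theorem~3]{BroerChuai}, which is precisely your local-data-plus-gluing argument, including the identification of the local wedge-determinants $\ell_H\,\volv$ and $\ell_H^{e_Hb_H+e_H-1}\volx$ and the product $Q(\tilde{\A})Q_{\det}$. The divisibility-plus-degree upgrade you describe for (b)$\Leftrightarrow$(c) is the standard content of the Broer--Chuai criterion, so nothing is missing.
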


%%%%%%%%%%%%%%%%%%%%%%%%%%%%%%%%%%%%%%%%%
%
\noindent We call $\theta_1,\ldots, \theta_n$
satisfying the equivalent conditions
of the last theorem
{\em basic derivations}
and call
$\omega_1,\ldots, \om_n$ 
satisfying the equivalent conditions
of the last theorem
{\em basic $1$-forms}.

%%%%%%%%%%%%%%%%%%%%%%%%%%%%%%%%
%%%%%

\subsection*{Saito criterion for 
invariant differential derivations}
Now we turn our attention to establishing a Saito criterion for $(S\ot \wedge^k V^*\otimes V)^G$.
This requires a detailed analysis of the action of transvections relegated to \cref{appendix}.
Such care is not required
over fields of characteristic zero as
all reflections are diagonalizable.
Within the analysis,
we distinguish those hyperplanes of $G$
whose pointwise stabilizers $G_H$ consist
of exactly one transvection and the identity. 
Define $\delta_H$ to be $1$ in this case and $0$ otherwise: 
\begin{equation}
\label{deltaH}
    \delta_H:=\left\{\begin{array}{ll}
    1 & \text{if } G_H=\{1_G,\, 
    \text{one transvection}\} \,
        , \\
    0 & \text{otherwise}\,.
\end{array}\right.
\end{equation}
Note that when $\characteristic \FF\neq 2$, any transvection and its inverse are distinct, so 
$$ 
\delta_H = 0 \text{ for all }H\in \A
\quad\text{ whenever char}\, \FF\neq 2
\, .
$$
Additionally, for each
$0\leq k\leq n$ corresponding to the rank of a differential derivation,
we define a polynomial 
which depends only upon $G$ up to a scalar in $\FF^\times$,
\begin{equation}
\label{Q_kdef}
Q_k:=\prod_{H\in\A}\ell_H^{\, e_H\, a_{H,k}}\, ,
\end{equation}
in terms of integers $a_{H,k}\geq 0$
depending on the pointwise stabilizer
$G_H$ of each $H$ in $\A$:
\begin{equation}
\label{adef}
a_{H,k}
:=
(n-\delta_H)\Big(\tbinom{n-1}{k}-\tbinom{n-b_H-1}{k}\Big)
+\tbinom{n-1}{k-1}-\tbinom{n-b_H-1}{k-1}\, .
\end{equation}
Here, $\tbinom{a}{b}=0$ if $a<b$ or $b<0$.

\begin{remark}{\em 
In the nonmodular setting,
%when $|G|$ and $\characteristic\FF$ are coprime, 
the group $G$ contains no transvections and $b_H=0$ for every reflecting hyperplane 
(minimal transvection root spaces), so each $a_{H,k}=0$ and
$$
Q_k= 1
\quad \text{ when }
\characteristic \FF\text{ and }|G|\text{ are coprime}\, .
$$
On the other end
of the spectrum, 
we will see in \cref{maximalsection}
that if $b_H=n-1$ for every reflecting hyperplane of $G$ (maximal transvection root spaces), then the reflecting
hyperplanes are in a single $G$-orbit
and there are fixed nonnegative integers 
$e$, $b$, $\delta$, and $a_k$ with
$e=e_H$, $b=b_H$, $\delta=\delta_H$, and $a_k=a_{H,k}$ 
for every reflecting hyperplane $H$, and
$$Q_k=(QQ_{\det})^{a_k} \quad\text{ with }\quad a_k=
\begin{cases}
0
&\text{when }k=0\, ,\\
(n-\delta)(n-1)
&\text{when }k=1\, ,\\
(n-\delta)\tbinom{n-1}{k}+\tbinom{n-1}{k-1}
&\text{when }k\geq 2\, .
\end{cases}
$$ 
}
\end{remark}

\vspace{2ex}

Now we establish a polynomial that divides the determinant of the coefficient matrix of any potential basis of
invariant differential derivations
of fixed rank.
Compare with \cite[Lemma~6.1]{ReinerShepler}
in the nonmodular case, where $Q_k= 1$ for all $k\geq 0$.
The analysis required here 
(relegated to the appendix) is more nuanced because of the existence
of transvections.

%%%%%%%%%%%%%%%%%%%%%%%%%%%%%%%%%%
\vspace{1ex}
\begin{lemma}
\label{saitolemma}
Consider a finite group $G\subset \GL(V)$.
For any set $\B$ of $n\tbinom{n}{k}$ elements in $(S\otimes \wedge^k V^*\otimes V)^G$, the determinant of $\coef(\mathcal{B})$ is divisible by the polynomial $$
Q_{\rule[0ex]{0ex}{1ex} %strut 
}^{\sbinom{n-1}{k}} Q_{\det}^{(n-1)\sbinom{n-1}{k-1}} Q_k
\, .
$$
\vspace{-3ex}
\end{lemma}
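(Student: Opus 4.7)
The plan is local-then-global. Since the linear forms $\ell_H$ for distinct $H\in\A$ are pairwise non-associate primes in the UFD $S$, it suffices to show separately for each $H\in\A$ that $\ell_H$ divides $\det\coef(\B)$ to the power $\binom{n-1}{k}+(e_H-1)(n-1)\binom{n-1}{k-1}+e_H\, a_{H,k}$; multiplying these local contributions over all $H\in\A$ then yields precisely $Q^{\binom{n-1}{k}}Q_{\det}^{(n-1)\binom{n-1}{k-1}}Q_k$.

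Fix $H\in\A$ and adopt the basis of \cref{ConvenientBasis}, so that $\ell_H=x_n$, the vectors $v_1,\dots,v_{b_H}$ are transvection root vectors for $t_1,\dots,t_{b_H}\in G_H$, and $v_n$ is a $\lambda$-eigenvector for the diagonalizable reflection $s_H$ of maximal order $e_H$. Expand each $\eta\in\B$ as $\eta=\sum_{I,j}f^{\eta}_{I,j}\otimes x_I\otimes v_j$ in the basis $\{x_I\otimes v_j:I\in\binom{[n]}{k},\ j\in[n]\}$ of $\wedge^k V^*\otimes V$. The crux of the argument is to show that $G_H$-invariance forces each coefficient $f^{\eta}_{I,j}$ to be divisible by a fixed power $x_n^{p_{I,j}}$ that depends only on the pair $(I,j)$ and the group $G_H$, never on the particular choice of $\eta$. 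Once this is established, dividing the $(I,j)$-column of $\coef(\B)$ by $x_n^{p_{I,j}}$ leaves a polynomial matrix, so $x_n^{P_H}\mid\det\coef(\B)$ for $P_H=\sum_{I,j}p_{I,j}$, and the lemma reduces to the combinatorial identity $P_H=\binom{n-1}{k}+(e_H-1)(n-1)\binom{n-1}{k-1}+e_H\, a_{H,k}$.

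I would extract $p_{I,j}$ by separating the $s_H$ contribution from the transvection contribution. The reflection $s_H$ acts diagonally on $x_I\otimes v_j$ by a power of $\lambda$ determined by whether $n\in I$ and whether $j=n$, and matching this against the $s_H$-weight decomposition $S=\bigoplus_{a\geq 0}x_n^a\cdot\FF[x_1,\dots,x_{n-1}]$ pins down an $x_n$-divisibility accounting jointly for the summands $\binom{n-1}{k}$ and $(e_H-1)(n-1)\binom{n-1}{k-1}$. Each transvection $t_m$ acts unipotently, fixing $x_i$ for $i\neq m$, sending $x_m$ to $x_m-x_n$, and sending $v_n$ to $v_n+v_m$; the resulting invariance constraints produce polynomial identities linking neighboring $f^{\eta}_{I,j}$ that force further divisibility by $x_n$ summing to $e_H\, a_{H,k}$, with the indicator $\delta_H$ of \cref{deltaH} entering precisely when $G_H$ contains exactly one nontrivial transvection (possible only in characteristic $2$) so that fewer independent constraints apply.

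This unipotent transvection analysis is the main obstacle, because transvections neither decouple $\wedge^k V^*\otimes V$ into eigenlines nor impose coefficientwise eigenvalue conditions; it has to be handled by a combinatorial induction on the $x_n$-adic expansions of the coefficients, which is the role of \cref{appendix}. Once \cref{appendix} supplies the per-column exponents $p_{I,j}$, the binomial identity implicit in the definition of $a_{H,k}$ in \cref{adef}, together with the straightforward count for the $s_H$-part, collapses $\sum_{I,j}p_{I,j}$ to the required value of $P_H$, completing the argument.
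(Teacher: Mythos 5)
Your global architecture is the same as the paper's: since the $\ell_H$ are pairwise coprime in $S$, it suffices to extract the stated power of $\ell_H$ from $\det\coef(\B)$ one hyperplane at a time, and that local statement is exactly \cref{detcoefdiv}, proved via the coefficient analysis of \cref{lhdivides}. The gap is in the step you build the local computation on: the claim that $G_H$-invariance forces \emph{each individual} coefficient $f^{\eta}_{I,j}$ to be divisible by a power $x_n^{p_{I,j}}$ depending only on $(I,j)$, so that the exponent $P_H$ is obtained by summing per-column exponents. That claim is false. Already $d\theta_E=\sum_i 1\ot x_i\ot v_i$ is invariant and has $f_{\{m\},m}=1$ for a transvection root index $m\leq b_H$, so no positive power of $\ell_H$ divides that coefficient, even though the required total exponent charges $e_H$ to every column $(I,m)$ with $n\notin I$ and $I\cap[b_H]=\{m\}$. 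What is actually true (part e) of \cref{lhdivides}) is only that $\ell_H^{\,e_H}$ divides the combination $f_{I,m}-\ep_{I,m}f_{\sigma(I),n}$ for such $(I,m)$; one must therefore first perform determinant-preserving column operations --- subtracting $\ep_{I,m}$ times the $(\sigma(I),n)$-column from the $(I,m)$-column --- before factoring $\ell_H$ out of those columns, which is how \cref{detcoefdiv} treats its set $E$ of $b_H\tbinom{n-b_H-1}{k-1}$ columns. Followed literally, your scheme undercounts $P_H$ by $e_H\, b_H\tbinom{n-b_H-1}{k-1}$ and the combinatorial identity with \cref{adef} fails.

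The rest of your outline does match the paper: the diagonalizable reflection $s_H$ gives genuine per-column divisibilities accounting for $\tbinom{n-1}{k}+(e_H-1)(n-1)\tbinom{n-1}{k-1}$, the transvections supply the $e_H a_{H,k}$ term, and $\delta_H$ weakens the conclusion exactly when $G_H$ is the identity together with a single transvection (note this also forces $e_H=1$, i.e., no diagonalizable reflections in $G_H$, not merely ``exactly one transvection''). So the fix is local but essential: replace ``each coefficient is divisible'' by ``each column, after suitable determinant-preserving column operations mixing the $(I,m)$- and $(\sigma(I),n)$-columns, is divisible.''
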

\begin{proof}
Fix a reflecting hyperplane $H=\ker\ell_H$ of $G$. 
By \cref{detcoefdiv} in \cref{appendix},
$\det\coef(\mathcal{B})$
is divisible by
$\ell_H$ to the power 
$
\tbinom{n-1}{k}+(e_H-1)(n-1)\tbinom{n-1}{k-1}+e_Ha_{H,k}$.
As the linear forms
$\ell_H$ are pairwise coprime
for $H\in \A$,
the claim follows.

\end{proof}

%%%%%%%%%%%%%%%%%%%%%%%%%%%%%%%%%%%%%%%%%

We establish a Saito criterion 
for invariant
differential derivations
in \cref{saitotheorem},
and the proof
depends
on an analysis for pointwise stabilizers $G_H$
of reflecting hyperplanes $H\in \A$.
As for the derivations and differential forms, 
we first require a criterion for the case of a group
fixing one hyperplane.
Recall that we write
$I\in\tbinom{[n]}{k}$
when
$I\subset [n]=\{1,\ldots, n\}$
with $|I|=k$.

\vspace{2ex}
\begin{proposition}
\label{OneHyperplane}
Suppose a nontrivial finite
group $G\subset \GL(V)$ 
fixes pointwise
a hyperplane $H=\ker \ell_H$ in $V=\FF^n$.
Then $(S\ot \wedge^k V^*\ot V)^G$ is a free $S^G$-module for all $k$,
and elements
$\eta_1\ldots, \eta_m$
are a basis
if and only if  $m=n\tbinom{n}{k}$ and
$$
\det\coef(\eta_1,\ldots, \eta_m)\doteq \ell_H^{\sbinom{n-1}{k} + (e_H-1)(n-1)\sbinom{n-1}{k-1}+e_Ha_{H,k}}
\, . $$
\end{proposition}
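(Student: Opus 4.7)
The plan is to mirror the proof of \cref{BasicDerivationsOneHyperplane}: apply Broer-Chuai's Theorem~3 to the $\FF G$-module $M=\wedge^k V^*\otimes V$. Their criterion simultaneously yields that $(S\otimes M)^G$ is a free $S^G$-module of rank $\dim_\FF M=n\tbinom{n}{k}$ and characterizes bases via a determinantal condition, so the task reduces to exhibiting one explicit basis whose coefficient determinant is $\ell_H^N$ up to scalar, where $N=\tbinom{n-1}{k}+(e_H-1)(n-1)\tbinom{n-1}{k-1}+e_H a_{H,k}$.

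Divisibility $\ell_H^N\mid\det\coef$ for any $n\tbinom{n}{k}$ invariants follows from \cref{saitolemma} specialized to the single-hyperplane arrangement $\A=\{H\}$: here $Q=\ell_H$, $Q_{\det}=\ell_H^{e_H-1}$, and $Q_k=\ell_H^{e_H a_{H,k}}$, so their product is $\ell_H^N$. To show this bound is tight, I would construct an explicit basis in the convenient coordinates of \cref{ConvenientBasis}. Starting from the basic derivations $\theta_1,\ldots,\theta_n$ and basic $1$-forms $\omega_1,\ldots,\omega_n$ built in the proof of \cref{BasicDerivationsOneHyperplane}, the products $\omega_I\theta_j$ for $I\in\tbinom{[n]}{k}$ and $1\le j\le n$ are $\F(S)$-independent by \cref{wholesetindependent}; however their Kronecker-product coefficient determinant generally exceeds the exponent $N$ by redundant factors. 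Following the pattern of \cref{AllButOneInto}, I would replace certain products by $d\theta_E$-type invariants and other twisted combinations to trim the exponent down to exactly $N$ while preserving $\F(S)$-independence.

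Once such an explicit basis is produced, the biconditional in the proposition is immediate. Since $G$ fixes a hyperplane pointwise, $S^G$ is a polynomial ring, so $(S^G)^\times=\FF^\times$ and any two bases of the free module $(S\otimes M)^G$ are related by a change-of-basis matrix with scalar determinant; hence every basis has coefficient determinant $\doteq\ell_H^N$. Conversely, Broer-Chuai's criterion forces any set of $n\tbinom{n}{k}$ homogeneous invariants with coefficient determinant $\doteq\ell_H^N$ to be a basis.

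The main obstacle will be engineering the explicit basis to hit the exponent $N$ exactly. The delicate part is the $\delta_H$-dependence of $a_{H,k}$: the edge case $G_H=\{1_G,\text{one transvection}\}$, which can occur only in characteristic~$2$, demands a combinatorially distinct construction than the generic case, and interleaving basic derivations, basic $1$-forms, and $d\theta_E$-type invariants across all ranks $k$ requires careful bookkeeping of the action of transvections on $\wedge^k V^*\otimes V$ of the sort developed in \cref{appendix}.
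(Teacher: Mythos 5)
Your plan follows the paper's own proof of \cref{OneHyperplane} essentially step for step: reduce via \cite[Proposition~6]{BroerChuai} (freeness) and \cite[Theorem~3]{BroerChuai} together with \cref{saitolemma} to exhibiting one explicit $\F(S)$-independent set whose coefficient determinant has degree exactly $N$, build that set from the products $\omega_I\theta_j$ of \cref{BasicDerivationsOneHyperplane} by trading some of them for $d\theta_E$-type invariants, and handle the $\delta_H=1$ (characteristic $2$) case by a separate construction. The only content you defer is precisely what the paper supplies: the normalization $\tilde{\omega}_I=\omega_I/x_n^{e\,m(I)}$ with $m(I)=\max\{0,\,|I\cap\{1,\dots,b,n\}|-1\}$, the replacement of $\tilde{\omega}_{I\cup\{n\}}\theta_n$ by $\tilde{\omega}_I\,d\theta_E$, and, when $\delta_H=1$, the additional invariant $\eta_0=x_1\ot x_1\ot v_1 + x_n\ot x_1\ot v_n + x_1\ot x_n\ot v_1 + x_1\ot x_n\ot v_n$ -- so the approach is the same.
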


%%%%%%%%%%%%%%%%%%%%%%%%%%%%%%
\begin{proof}
We abbreviate
$\ell=\ell_H$, $e=e_H$, $b=b_H$,  $\delta=\delta_H$, $a_k=a_{H,k}$,
and use the basis $v_1,\dots,v_n$ of $V$ with dual basis $x_1,\dots,x_n$ of $V^*$ of \cref{ConvenientBasis} 
so that $\ell=x_n$
and $x_n^e$ is $G$-invariant.
We also use the basic derivations
$\theta_i$ and basic $1$-forms
$\om_i$ from
the proof of \cref{BasicDerivationsOneHyperplane}.
For a fixed $k$,
consider the subset
of invariant differential derivations
$$
\B_k=\big\{\tilde{\omega}_I\, 
\theta_j:
I\in\tbinom{[n]}{k},\, 
1\leq j\leq n
\text{ with } 
n\notin I\text{ or } j\neq n\big\}
\cup
\big\{\tilde{\omega}_I\, d\theta_E:
I\in\tbinom{[n]}{k-1}
\text{ with } 
n\notin I
\big\}
\, ,
$$
where
$
\tilde{\omega}_I=
\omega_I/x_n^{e\, m(I)}
\text{ for }
m(I)=
\max\big\{0\, ,\, |I\cap\{1,\dots,b,n\}|-1\big\}
$.
By \cref{wholesetindependent}, 
the set 
$\{\tilde{\omega}_I\, \theta_j:I\in\tbinom{[n]}{k},1\leq j\leq n\}$ is independent over $\F(S)$.
We argue that, for each $I$ with $n\notin I$,
we may replace
$\tilde{\omega}_{I\cup\{n\}}\theta_n$ 
in this set
by $\tilde{\omega}_I\, d\theta_E$
while maintaining $\F(S)$-independence
to show
that the resulting set $\B_k$ is also  $\F(S)$-independent.
Note that 
$$
x_n^e\, d\theta_E=\sum_{i=1}^{b}\omega_i\theta_i+\sum_{i=b+1}^{n-1}(x_n^{e}\omega_i\theta_i-x_i\omega_n\theta_i)+\omega_n\theta_n
\, ,
$$
and thus,
for $I\subset\{1,\dots,n-1\}$,
$$
\begin{aligned}
\tilde{\omega}_I\, d\theta_E
%&=\omega_{I^*}\wedge\frac{1}{x_n^e}
%\Big(\sum_{i=1}^{b}\omega_i\theta_i
%+\sum_{i=b+1}^{n-1}(x_n^{e}\omega_i\th%eta_i+x_i\omega_n\theta_i)
%+\omega_n\theta_n\Big)\\
&=
\frac{1}{x_n^e}
\sum_{i=1}^{b}
(\tilde{\omega}_I\wedge
\omega_i)\,\theta_i
+
\frac{1}{x_n^e}
\sum_{i=b+1}^{n-1}
\big(
x_n^{e}\,
\tilde{\omega}_I\wedge
\omega_i-x_i\,\tilde{\omega}_I\wedge\omega_n
\big)\,\theta_i
+
\frac{1}{x_n^e}
(\tilde{\omega}_I\wedge\omega_n)\,\theta_n
%\\
%&=
%\sum_{i=1}^{b}
%\Big(\frac{\om_I'\wedge \om_i}{x_n^e}
%\Big)\, \theta_i
%+
%\sum_{i=b+1}^{n-1}
%\Big(\omega_{I}'\wedge
%\omega_i+\frac{x_i\,\tilde{\omega}_I\wedge \omega_n}{x_n^e}\Big)
%\, \theta_i
%+
%\Big(\frac{\tilde{\omega}_I\wedge\omega_n}
%{x_n^e}\Big)\,
%\theta_n
\, .
\end{aligned}
$$
Thus each $\tilde{\omega}_I\, d\theta_E$ lies in the $\F(S)$-span of 
$\{\tilde{\omega}_I\,\theta_j:I\in\tbinom{[n]}{k},1\leq j\leq n\}$ with the coefficient
of $\tilde{\omega}_{I\cup\{n\}}\theta_n$ nonzero when $n\notin I$.
As the various sets $I\cup\{n\}$ with $n\notin I$
are distinct,
$\B_k$ is $\F(S)$-independent.

First suppose $\delta=0$
and set
$\Delta_k =\tbinom{n-1}{k}
+(e-1)(n-1)\tbinom{n-1}{k-1}+ea_{k}$. 
The module
$(S\otimes \wedge^k V^*\otimes V)^{G}$
is free over $S^G$ by \cite[Proposition~6]{BroerChuai}, 
say
with basis $\mathcal{C}_k$.
Each element of $\B_k$
lies in the $S^G$-span of $\mathcal{C}_k$
with polynomial coefficients recorded by some matrix $M$,
and 
$$
\det\text{Coef}({\mathcal{C}_k})
\cdot
\det({M})\
=
\det\text{Coef}({\B_k})\neq 0
\, .
$$
By \cref{saitolemma},
$\ell^{\Delta_k}$
divides
$\det\coef(\mathcal{C}_k)$ in $S$ (as $\delta=0$), while
a calculation confirms that  $\deg(\det\coef(\B_k))=\Delta_k$.
Hence
$\det\coef(\mathcal{C}_k)
\doteq \ell^{\Delta_k}$
and
\cite[Theorem~3]{BroerChuai}
implies the result.

Now suppose $\delta=1$ and set
$\Delta_k'=\tbinom{n-2}{k}+n\tbinom{n-2}{k-1}+\tbinom{n-2}{k-2}$. Here,
$G=\{1_G,t_1\}$, $\characteristic\FF=2$, and
$e=b=1$. Consider an alternate subset of invariant differential derivations
$$
\begin{aligned}
\B_k'=
&\big\{\tilde{\omega}_I\, 
\theta_j:
I\in\tbinom{[n]}{k},\, 
1\leq j\leq n
\text{ with } 
I\cap\{1,n\}=\varnothing \text{ or } j\neq n\big\}
\\
&\cup
\big\{\tilde{\omega}_I\, d\theta_E:
I\in\tbinom{[n]}{k-1}
\text{ with }
n\notin I
\big\}
\cup
\{\tilde{\omega}_I\, \eta_0:I\in\tbinom{[n]}{k-1}\text{ with } I\cap\{1,n\} =\varnothing
\big\}
\, ,
\end{aligned}
$$
where $\eta_0=x_1\ot x_1\ot v_1 + x_n\ot x_1\ot v_n + x_1\ot x_n\ot v_1 + x_1\ot x_n\ot v_n$,
which is $G$-invariant.
We argue that, 
for each $I$ with $I\cap\{1,n\} =\varnothing$,
we may replace
$\tilde{\omega}_{I\cup\{1\}}\theta_n$ in $\B_k$
by $\tilde{\omega}_I\,\eta_0$ while maintaining $\F(S)$-independence to show that $\B_k'$ is also $\F(S)$-independent. 
As $\characteristic\FF=2$,
$$
x_n\, \eta_0=
\sum_{i=2}^{n-1}x_i\omega_1\theta_i+(x_1^2+x_1x_n)\omega_n\theta_1+\omega_1\theta_n\, ,
$$
and hence, for $I\subset\{2,\dots,n-1\}$,
$$
\tilde{\omega}_I\, \eta_0
=
\sum_{i=2}^{n-1}\Big(\frac{\tilde{\omega}_I\wedge\omega_1}{x_n}\Big)\theta_i
+(x_1^2+x_1x_n)\Big(\frac{\tilde{\omega}_I\wedge\omega_n}{x_n}\Big)\theta_1
+\Big(\frac{\tilde{\omega}_I\wedge\omega_1}{x_n}\Big)\theta_n
\, .
$$
Thus 
each 
$\tilde{\omega}_I\, \eta_0$ 
with 
$I\cap\{1,n\}=\varnothing$
lies
in the $\F(S)$-span of 
a subset of
$\B_k$
with nonzero coefficient of $\tilde{\omega}_{I\cup\{1\}}\theta_n$.
As these subsets
are disjoint for the various
$I$ with $I\cap\{1,n\}=\varnothing$,
the set
$\B_k'$ is $\F(S)$-independent
and $\det\coef(\B'_k)\neq 0$.
A computation shows that 
$\Delta_k'$ is simultaneously
the degree 
of $\det\coef(\B_k')$
and the degree of the polynomial
in \cref{saitolemma} (as $\delta=1$).
The claim then follows as in the previous case
using \cite[Theorem~3]{BroerChuai}.
\end{proof}
%%%%%%%%%%%%%%%%%%%%%%%%%%

\cref{OneHyperplane}
with \cite[Theorem~3]{BroerChuai}
then implies \cref{SaitoThmIntro}
of the introduction:

\vspace{1ex}
\begin{theorem}
\label{saitotheorem}
%needs to match theorem 1.1

Consider a finite group $G\subset \GL(V)$ acting on $V=\FF^n$.
For a set $\B$ of $n\tbinom{n}{k}$ homogeneous elements in $(S\ot \wedge^k V^*\ot V)^G$, the following are equivalent:

\begin{enumerate}
\setlength{\itemindent}{-1ex}
\item[a)]
$(S\otimes \wedge^k V^*\otimes V)^G$ is a free $S^G$-module with basis $\B$.

\item [b)] $\det \coef(\B) \doteq  Q_{\rule[0ex]{0ex}{1ex} %strut 
}^{\sbinom{n-1}{k}} Q_{\det}^{(n-1)\sbinom{n-1}{k-1}} Q_k$.
\rule[-2ex]{0ex}{6ex}% strut

\item [c)]
$\B$ is independent over $\F(S)$
and 
$\displaystyle
\sum_{\eta\in\B} \deg\eta=\sum_{H\in\A} \tbinom{n-1}{k}+(e_H-1)(n-1)\tbinom{n-1}{k-1}+e_H a_{H,k}\, .
$
\end{enumerate}
\end{theorem}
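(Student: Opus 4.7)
The plan is to assemble the theorem from Proposition~\ref{OneHyperplane} (which controls the local structure at each reflecting hyperplane) glued via Broer--Chuai's Theorem~3 \cite{BroerChuai} (the local-to-global principle), with the degree-to-determinant translation supplied by Lemma~\ref{saitolemma}. Throughout I exploit that each $\eta\in\B$ being homogeneous forces the $\eta$-row of $\coef(\B)$ to have entries of a single polynomial degree $\deg\eta$, so $\det\coef(\B)$, when nonzero, is homogeneous of degree $\sum_{\eta\in\B}\deg\eta$, and $\det\coef(\B)\neq 0$ is equivalent to $\F(S)$-independence of $\B$.

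I would first establish $(a)\Leftrightarrow(c)$. Broer--Chuai's Theorem~3, applied to the $\FF G$-module $M=\wedge^k V^*\otimes V$, characterizes an $S^G$-basis of $(S\otimes M)^G$ by: (i) independence over $\F(S)$, and (ii) total degree matching the sum over $H\in\A$ of the total degree of an $S^{G_H}$-basis of $(S\otimes M)^{G_H}$. Proposition~\ref{OneHyperplane} identifies each such local coefficient determinant as scalar-equivalent to $\ell_H^{D_H}$ with
\[
D_H=\binom{n-1}{k}+(e_H-1)(n-1)\binom{n-1}{k-1}+e_H a_{H,k},
\]
so the local degree sum at $H$ is exactly $D_H$, and summing over $\A$ reproduces the right-hand side of the formula in (c).

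Next I would handle $(a)\Rightarrow(b)$. From (a), $\B$ is $\F(S)$-independent so $\det\coef(\B)\neq 0$, and Lemma~\ref{saitolemma} yields that $P:=Q^{\binom{n-1}{k}}Q_{\det}^{(n-1)\binom{n-1}{k-1}}Q_k$ divides $\det\coef(\B)$ in $S$. A direct expansion using $\deg Q=|\A|$, $\deg Q_{\det}=\sum_{H}(e_H-1)$, and $\deg Q_k=\sum_{H} e_H a_{H,k}$ gives $\deg P=\sum_{H} D_H$, which by the already-established $(a)\Rightarrow(c)$ equals $\deg\det\coef(\B)$; hence $\det\coef(\B)\doteq P$. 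The implication $(b)\Rightarrow(c)$ is the same degree calculation read in reverse, with $P\neq 0$ immediately forcing $\F(S)$-independence of $\B$ and therefore the degree formula.

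The main obstacle really lies upstream of this assembly: it is the careful transvection analysis of the appendix together with Proposition~\ref{OneHyperplane}, and correctly invoking Broer--Chuai's local-to-global criterion for the module $\wedge^k V^*\otimes V$. Once both are in hand, the work here is essentially bookkeeping---matching the local exponent $D_H$ of $\ell_H$ with the combined contributions to $P$ coming from $Q$, $Q_{\det}$, and $Q_k$, with the subtle characteristic-$2$ contribution absorbed into the definition of $a_{H,k}$ via $\delta_H$.
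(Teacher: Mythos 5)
Your proposal is correct and follows essentially the same route as the paper, which likewise deduces the theorem from \cref{OneHyperplane} combined with Broer--Chuai's local-to-global criterion, with \cref{saitolemma} supplying the equivalence of the determinant and degree conditions. The paper states this derivation in a single sentence; your write-up simply makes the bookkeeping explicit.
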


\begin{example}
\label{dim1example}
{\em
Let $G\subset \GL(V)$ be a nontrivial finite group with $\dim V=1$.
Then $G$ is cyclic
say with generator $s$ of order $e>1$.
Notice that $s$ is a reflection fixing the hyperplane
$H=\{0_V\}$, so $G$ is a reflection group.
All elements in $G$ are diagonalizable, 
so $G$ does not contain any transvections, 
and the transvection root space of $H$
has dimension $0=n-1$ (so is maximal).
Let $v$ be a basis of $V$ with dual basis $x$ of $V^*$.
Then $\omega=x^{e-1}\ot x$ 
is an $S^G$-basis of $(S\ot V^*)^G$
and 
$\theta=x\ot v$ 
is an $S^G$-basis of $(S\ot V)^G$ (see \cref{saitoderivations}).
Finally, $\theta$ and $d\theta_E$ are an $S^G$-basis of $(S\ot \wedge V^*\ot V)^G$ by \cref{saitotheorem}.
Note that $\om$ and $\theta$
are dual in some sense,
see \cref{dim1remark}.
} %end \em
\end{example}

\begin{remark}{\em 
Note that one direction
of \cref{saitotheorem}
follows directly
from a version of
Solomon's 1963 original
argument \cite{Solomon}.
Indeed,
(b) and (c) 
are equivalent by 
\cref{saitolemma}, and
any $\F(S)$-independent ordered subset
$\B=\{\eta_1,\ldots, \eta_m\}$
with 
$m=n\binom{n}{k}$
spans $\F(S)\otimes M$
over $\F(S)$
for $M=\wedge^kV^*\otimes V$.
Hence, after relabeling the basis elements $x_I\otimes v_j$ of $M$
as $z_1,\dots,z_m$,
we may
write a fixed $\eta\in(S\otimes M)^G$ as
$$
\eta=\sum_j f_j\otimes z_j
\quad\text{ and as}\quad
\eta=\sum_i h_i\, \eta_i
\text{ for some $h_i\in \F(S)$}.
$$
Here,
$\eta_i=\sum_j
\coef(\mathcal{B})_{ij}\otimes z_j$
(with each 
$\coef(\B)_{ij}$
in $S$)
and 
$\textbf{h}\cdot \begin{matrix}
\coef(\mathcal{B})
\end{matrix}
=\textbf{f}$
for row vectors $\textbf{h}=(h_1,\dots,h_m)$ and $\textbf{f}=(f_1,\dots,f_m)$.
Cramer's Rule implies that
$h_i
=\frac{\det\coef(\mathcal{B})_{(i)}}{\det\coef(\mathcal{B})}$,
where $\coef(\mathcal{B})_{(i)}$ is obtained by replacing the $i$-th row of $\coef(\mathcal{B})$ by $\textbf{f}$. 
Since 
$\eta_1\wedge\cdots\wedge\eta_{m}
=\det\coef(\B)\ot z_1
\wedge\cdots\wedge z_m$,
the polynomial
$\det\coef(\B)$
is
semi-invariant with respect to the linear character
${\det}_M^{-1}$ of $G$
for $\det_M$ the character
afforded by $\Wedge^m M$,
as is $\det \coef(\B)_{(i)}$ likewise,
and hence $h_i\in \F(S)^G$. 
By \cref{saitolemma},
$\det\coef(\B)$ divides
 $\det\coef(\B)_{(i)}$,
so $h_i$ lies in $S\cap \F(S)^G=S^G$ (see, e.g., \cite[Section 1.7]{CampbellWehlau} or \cite[Section 1.2]{SmithBook}).
}%end remark \em
\end{remark}

%%%%%%%%%%%%%%%%%%%%%%%%%%%

\begin{remark}
{\em 
Of particular interest is
the set 
$(S\ot V^*\ot V)^G$ of invariant
differential derivations of
rank $1$
(see \cite{ParkingSpaces, ReinerShepler, ReinerSommersShepler}).
By \cref{saitotheorem}, $n^2$ homogeneous
$\F(S)$-independent elements in $(S\ot V^*\ot V)^G$ are an $S^G$-basis if and only if
their polynomial degrees
add to
$$
\Delta_1=\sum_{H\in\A}e_H(b_Hn-b_H\delta_H+n-1) 
\, .
$$
}
\end{remark}

%%%%%%%%%%%%%%%%%%%%%%%%%%%%%%%%%%%%%%%%%%%%%%%%%%%%%%%%%%%%%%%%%%%%%%%%%%%%%%%%%%

%%%%%%%%%%%%%%%%%%%%%%%%%%%%%%%%%
\begin{example}
{\em
Consider the group
$$
G=\left\langle
\begin{smallpmatrix}
1&1&0\\
0&1&0\\
0&0&1
\end{smallpmatrix},
\begin{smallpmatrix}
1&0&1\\
0&1&0\\
0&0&1
\end{smallpmatrix},
\begin{smallpmatrix}
1&0&0\\
0&1&1\\
0&0&1
\end{smallpmatrix},
\begin{smallpmatrix}
1&0&0\\
0&-1&0\\
0&0&-1
\end{smallpmatrix}
\right\rangle
\subset \GL_3(\FF_3)
\, .
$$
The subgroup $W$ generated by the reflections of $G$ 
is the group
of unipotent upper triangular matrices in $\GL_3(\FF_3)$
and
$\A=\A(G)=\A(W)$
is 
defined by $Q=x_2^3x_3-x_2x_3^3$.
Three hyperplanes $H$ in $\A$ each have transvection root space of dimension $b_H=1$, whereas $b_H=2$ for one hyperplane
($\ker x_3$).
Note that $e_H=1$ and $\delta_H=0$ for all $H$ in $\A$.
The ring of $W$-invariant polynomials
is $S^W=\FF[f_1, f_2, f_3]$ for
$$
f_1=x_3,\ \ 
f_2=x_2^3-x_2x_3^2,\ \ 
f_3=x_1^9 - x_1^3x_2^6 - x_1^3x_2^4x_3^2 - x_1^3x_2^2x_3^4 - x_1^3x_3^6 + x_1x_2^6x_3^2 + x_1x_2^4x_3^4  + x_1x_2^2x_3^6\, .
$$
Additionally, $(S\ot V)^W$ and $(S\ot V^*)^W$ are free $S^W$-modules with respective bases $\theta_1,\theta_2,\theta_3$ and
$\omega_1,\omega_2,\omega_3$ by \cref{saitoderivations} for
$$
\begin{aligned}
\theta_1&=1\ot v_1,\quad
\theta_2=x_1\ot v_1 + x_2\ot v_2 + x_3\ot v_3,\quad
\theta_3=x_1^3\ot v_1 + x_2^3\ot v_2 + x_3^3\ot v_3,\\
\omega_1&=1\ot x_3,
\quad
\omega_2=x_3\ot x_2 -x_2\ot x_3,\\
\omega_3&=(x_2^3x_3-x_2x_3^3)\ot x_1+(-x_1^3x_3+x_1x_3^3)\ot x_2
+ (x_1^3x_2-x_1x_2^3)\ot x_3\, .
\end{aligned}
$$
By \cref{saitotheorem},
$(S\ot V^*\ot V)^W$ is a free $S^W$-module with basis
$$
\{d\theta_E\}\ \cup\
\{\omega_i\theta_j:1\leq i,j\leq 3\}\setminus\{\omega_3\theta_1\}
\, .
$$
Here,
%$\theta_E=\theta_2$ and
$(x_2^3x_3-x_2x_3^3)\, d\theta_E=\omega_3\theta_1+\omega_2\theta_3-x_3^2\omega_2\theta_2+(x_2^3-x_2x_3^2)\omega_1\theta_2$,
so we may indeed replace $\om_3\theta_1$
(or alternatively $\omega_2\theta_3$) by $d\theta_E$
in the set from
\cref{wholesetindependent} (with $k=1$)
to obtain a basis.

Notice that although
$S^G$ is not a polynomial ring 
(as $G$ is not a reflection group, see \cite{Serre}),
the derivations
$\theta_1,\theta_2,\theta_3$ 
lie in $(S\ot V)^G$
so are a basis of the $S^G$-module $(S\ot V)^G$
by \cref{saitoderivations}.
However, $(S\ot V^*)^G$ can not be free
over $S^G$ 
since otherwise a basis
would also serve as an
$S^W$-basis of $(S\ot V^*)^W$
by \cref{saitoderivations}
(see \cite[Corollary~6]{BroerChuai}) and thus would contain
an element of polynomial degree $0$,
which is not possible as
$(S_0\ot V^*)^G$ is empty.
Note that
$\omega_i\in (S\otimes V^*)^G$
only for $i=2,3$.
Similarly, $(S\ot V^*\ot V)^G$ is not a free $S^G$-module by \cref{saitotheorem}
as $\dim_{\FF}(S_0\ot V^*\ot V)^G$ is $1$,
not $2$ (as for $W$).
} %end \em
\end{example}

%%%%%%%%%%%%%%%%%%%%%%%%%%%%%%%%%%%%%%%%%
%%%%%%%%%%%%%%%%%%%%%%%%%%%%%%%%%%%%%%%%%
\section{Groups with Transvection Root Spaces Maximal}
\label{maximalsection}

We now consider groups whose transvection root spaces are maximal, i.e., groups for which
each transvection root space
coincides with
its reflecting hyperplane
(so $b_H=n-1$ for
all $H$ in the reflection arrangement $\A$).
Such groups include the special and general linear groups $\SL_n(\FF_q)$ and $\GL_n(\FF_q)$ (see \cref{slglsection}). We 
show that all reflecting hyperplanes
are in one orbit and recall a twisted
wedging that exhibits the invariant differential forms as a free
exterior algebra.
We also consider semi-invariant differential forms
with respect to a linear character.

%%%%%%%%%%%%%%%%%%%%%%%%%%%%%%%%%%%%%%%%%
\subsection*{Only one orbit of reflecting hyperplanes}
Recall that a group $G
\subset{\GL(V)}$ acts on its set
$\A$
of reflecting hyperplanes in $V=\FF^n$
with $gH=H'$ for $g$ in $G$ whenever
a reflection in $G$ about $H\in \A$
is conjugate by $g$
to a reflection in $G$ about $H'\in \A$.

\begin{proposition}
\label{sameorbit}
Let $G\subset \GL(V)$ be a finite group.
Any two reflecting hyperplanes 
of $G$ with maximal transvection root spaces lie in the same orbit.
\end{proposition}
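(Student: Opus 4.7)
The plan is to produce, for any distinct $H \neq H'$ in $\A$ both having maximal transvection root spaces, an element $g \in G$ with $g(H) = H'$. Since $\A(G) = \A(W)$ where $W \subset G$ is the subgroup generated by the reflections in $G$, and the $W$-orbit of a hyperplane is contained in its $G$-orbit, I may assume $G = W$ throughout.

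Because the transvection root vectors at $H$ span $H$ over $\FF$ and $H \not\subset H'$, there exists a transvection $t \in G$ about $H$ with root vector $v \in H$ satisfying $\beta := \ell_{H'}(v) \neq 0$; symmetrically, a transvection $t' \in G$ about $H'$ has root vector $v' \in H'$ with $\alpha := \ell_H(v') \neq 0$. I would track these through the dual action on $V^*$: the two-dimensional subspace $W^* := \langle \ell_H, \ell_{H'} \rangle \subset V^*$ is preserved by both $t^*$ and $t'^*$, whose restrictions to $W^*$ are the linearly independent transvections $\begin{smallpmatrix} 1 & -\beta \\ 0 & 1 \end{smallpmatrix}$ and $\begin{smallpmatrix} 1 & 0 \\ -\alpha & 1 \end{smallpmatrix}$ in $\SL(W^*)$ expressed in the basis $(\ell_H, \ell_{H'})$. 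Finding $g \in G$ with $g(H) = H'$ is equivalent to producing an element of the subgroup $\Gamma \subseteq \SL(W^*)$ generated by all such restrictions that sends $\ell_H$ to a nonzero scalar multiple of $\ell_{H'}$. A direct computation gives, for instance, that $g = tt'$ satisfies $g^*(\ell_H) = (1+\alpha\beta)\ell_H - \alpha\ell_{H'}$, a scalar multiple of $\ell_{H'}$ precisely when $\alpha\beta = -1$; in that case $g(H) = H'$ and the argument is complete.

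The main obstacle is that for a single choice of $t$ and $t'$ the parameters $\alpha,\beta$ need not satisfy $\alpha\beta = -1$. To address this, I would exploit the full maximality hypothesis: as $v$ and $v'$ range over all transvection root vectors at $H$ and $H'$, the values $\beta = \ell_{H'}(v)$ and $\alpha = \ell_H(v')$ sweep out nonzero $\FF_p$-subspaces $A, B \subseteq \FF$, and the generated subgroup $\Gamma$ accordingly contains full families of upper and lower transvections with strengths in $A$ and $B$. I would then argue that $\Gamma$ necessarily contains an element whose $(1,1)$-entry vanishes in some Bruhat-style factorization $L_\alpha\, U_\beta\, L_\gamma$, by combining further conjugations and compositions and invoking classical generation results for subgroups of $\SL_2(\FF)$ generated by transvections whose parameters lie in $\FF_p$-subspaces. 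The delicate case is when $\FF$ strictly contains $\FF_p$ and the subspaces $A, B$ are ``small,'' where one must iterate the construction to enlarge $\Gamma$; here the simultaneous maximality at both $H$ and $H'$ provides the flexibility needed to close the argument.
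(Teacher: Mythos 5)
Your reduction to a rank-two problem is sound and in fact mirrors the paper's own strategy: both arguments use maximality only to pick a transvection $t$ about $H$ whose root vector lies outside $H'$ and a transvection $t'$ about $H'$ whose root vector lies outside $H$, and both observe that the action then takes place in a two-dimensional space, so that everything reduces to a finite subgroup of $\SL_2(\FF)$ generated by two opposite transvections. The genuine gap is that you stop exactly at the crux. You must produce an element of $\Gamma$ carrying the line $\FF\ell_H$ to $\FF\ell_{H'}$; you correctly note that a single product such as $tt'$ need not do this, and you then defer to unnamed ``classical generation results'' and an unexplained iteration in the ``delicate case'' $\FF\supsetneq\FF_p$. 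That deferred step \emph{is} the proposition. The paper closes it by invoking Dickson's classification of the finite subgroups of $\SL_2(\FF)$ so generated (Suzuki, Ch.~3, \S 6): the group is a dihedral group $D_{2m}$ with $m$ odd (characteristic $2$), or $\SL_2(\FF_5)$ (characteristic $3$), or a conjugate of $\SL_2(\FF_q)$. In the first two cases $t$ and $t'$ are conjugate inside $\langle t,t'\rangle$ because the reflections of $D_{2m}$ with $m$ odd, respectively the order-$3$ elements of $\SL_2(\FF_5)$, form a single conjugacy class; in the third case one uses transitivity of $\SL_2(\FF_q)$ on $\mathbb{P}^1(\FF_q)$ together with a purely group-theoretic identification of the hyperplane stabilizers. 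None of this is Bruhat bookkeeping with parameters confined to your subspaces $A$ and $B$: in the dihedral case, for instance, the conjugating element is an odd power of $tt'$, not a product $L_\alpha U_\beta L_\gamma$ with entries in $B$, $A$, $B$.

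A secondary point: your worry that $A$ and $B$ may be too small, and that one must exploit the ``full maximality hypothesis'' to enlarge $\Gamma$, is a red herring. Maximality is needed only to guarantee the existence of the two root vectors in general position; after that, the subgroup generated by the two transvections $t$ and $t'$ alone already contains the required element, by the case analysis above. So the iteration you propose is unnecessary, but the Dickson-type analysis you omit is indispensable, and without it the argument is incomplete.
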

\begin{proof}
Fix two such hyperplanes $H=\ker \ell_H$ and $H'=\ker \ell_{H'}$.
Since the transvection root space of
$H$ is maximal, we may 
choose a root vector $v_1$
of a transvection $t$ in $G$ about $H$
with
 $v_1\notin H'$.
Similarly, we may choose 
 a root vector $v_2\notin H$  
of a transvection $t'$ in $G$ about $H'$. 
Extend $v_1, v_2$ to a basis $v_1,\dots,v_n$ of $V$ (so $v_3,\dots,v_n$ span $H\cap H'$), 
and rescale $v_2$ and $\ell_{H'}$ so that $\ell_H(v_2)=1$ while $v_2$ remains a root vector with respect to $\ell_{H'}$.
Then for the dual basis $x_1, \ldots, x_n$ of $V^*$, $\ell_H=x_2$ and $\ell_{H'}=\alpha x_1$ for some $\alpha$ in $\FF^\times$,
and with this basis of $V$,
$$
t=
\left(
\begin{smallarray}{cc|ccc}
1&1&&\\
0&1&&\rule[-1ex]{0ex}{1ex}
\\
\hline\rule[-2ex]{0ex}{4ex}
&&1\vspace{-1.3em}\\
&&&\ddots\\
&&&&1
\end{smallarray}
\right)
\quad\text{ and }\quad
t'=\left(
\begin{smallarray}{cc|ccc}
1&0&&\\
\alpha &1&&\rule[-1ex]{0ex}{1ex}\\
\hline\rule[-2ex]{0ex}{4ex}
&&1\vspace{-1.3em}\\
&&&\ddots\\
&&&&1
\end{smallarray}
\right)
.
$$
Thus $\langle t,t'\rangle$ 
is isomorphic to a 
finite subgroup of $\SL_2(\FF)$. 
We use the classification
of such groups by Dickson,
see \cite[Chapter~3,~Section~6]{Suzuki} (see also Chapter 2, Theorem 6.8):
\begin{enumerate}
    \item [1)] $p=2$ and $\langle t,t'\rangle \cong D_{2m}$,
    the dihedral group of order $2m$, with $m$ odd,
     or
    \item [2)] $p=3$ and $\langle t,t'\rangle \cong \SL_2(\FF_5)$, or
    \item [3)] $\langle t,t'\rangle \cong\SL_2(\FF_q)$ for some $p$-power $q$,
\end{enumerate}
where $p=\characteristic\FF$.
In the first case, the transvections of $\langle t,t'\rangle$ have order $2$, so correspond to the reflections in $D_{2m}$, which all lie in the same conjugacy class as $m$ is odd.
In the second case, the transvections have order $3$, so similarly correspond to the elements of order $3$ in $\SL_2(\FF_5)$, which again are all in the same conjugacy class.
As $t$ and $t'$ are conjugate
in these two cases, $H$ and $H'$ lie in the same orbit.

In the final case, 
we first notice that
the transvections 
in $K_0:=\SL_2(\FF_q)$ are precisely
the elements of order $p$,
and the same is true for the group 
$K:=\langle t,t'\rangle$
since $K$ fixes 
$v_3, \ldots, v_n$.
Hence the
transvections
$t,t'$ in $K$ 
about hyperplanes
$H,H'$, respectively, in $V$
correspond under the isomorphism
to 
transvections
$t_0,t'_0$ in $K_0$ about some hyperplanes
$H_0,H'_0$, respectively, in $V_0=\FF_q^2$.
But $\SL_2(\FF_q)$ acts transitively on the set of projective points in $\FF_q^2$, i.e., all hyperplanes in $V_0$
are in the same $K_0$-orbit.
Thus the pointwise stabilizer
subgroups 
$\text{Stab}_{K_0}(H_0)$
and 
$\text{Stab}_{K_0}(H_0')$
are conjugate in $K_0$,
which implies 
that the pointwise stabilizer
subgroups 
$\text{Stab}_{K}(H)$
and 
$\text{Stab}_{K}(H')$
are likewise conjugate in $K$.
This follows from the fact
that all of these 
stabilizer subgroups have
a purely group-theoretic description:
the transvections in $K_0$ about a fixed
hyperplane are exactly
the order $p$ elements
that commute with any fixed
transvection about that hyperplane, and the same
is true for $K$.
Thus $H$ and $H'$ lie in the same orbit,
although $t$ and $t'$ may not be conjugate.
\end{proof}

\cref{sameorbit}
has the following immediate implication.
\begin{cor}
\label{actstransitively}
Let $G\subset \GL(V)$ be a finite group 
with transvection root spaces all maximal. Then $G$ acts transitively on the set of its reflecting hyperplanes.
\end{cor}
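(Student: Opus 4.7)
The plan is to deduce this corollary directly from \cref{sameorbit}, which has already established that \emph{any two} reflecting hyperplanes with maximal transvection root spaces lie in a single $G$-orbit. The corollary's hypothesis is precisely that every reflecting hyperplane in $\A(G)$ has maximal transvection root space, so the preceding proposition applies to \emph{every} pair $H, H' \in \A$.

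Concretely, I would fix an arbitrary hyperplane $H_0 \in \A$ and argue that for any other $H \in \A$, both $H_0$ and $H$ satisfy $b_{H_0} = b_H = n-1$ by hypothesis. Invoking \cref{sameorbit} on the pair $\{H_0, H\}$ yields an element $g \in G$ with $gH_0 = H$. Since $H$ was arbitrary, the $G$-orbit of $H_0$ exhausts $\A$, which is the definition of transitive action on $\A$.

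There is no real obstacle here: the corollary is essentially a restatement of \cref{sameorbit} once one observes that the universal hypothesis (every hyperplane's transvection root space is maximal) is equivalent to saying the pairwise hypothesis of \cref{sameorbit} holds for every pair. The proof is a one-line reduction, and I would present it as such in a sentence or two rather than duplicating the substantial case analysis (Dickson's classification of finite subgroups of $\SL_2(\FF)$) already carried out for \cref{sameorbit}.
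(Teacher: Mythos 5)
Your proposal is correct and matches the paper exactly: the paper presents \cref{actstransitively} as an immediate consequence of \cref{sameorbit}, with no further argument needed, since the hypothesis that all transvection root spaces are maximal lets one apply \cref{sameorbit} to every pair of reflecting hyperplanes. Your one-line reduction is precisely what is intended.
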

%%%%%%%%%%%%%%%%%%%%%%%%

\begin{remark}
\label{OneOrbit}
{\em
For a finite group $G$ acting
linearly, when
reflecting hyperplanes $H$, $H'$ of $G$ are in the same $G$-orbit,
their pointwise stabilizers $G_H$, $G_{H'}$ are conjugate in $G$. 
Thus \cref{actstransitively} implies that for groups $G$ whose transvection root spaces are all maximal, 
we have nonnegative integers $e$, $b$, $\delta$, and $a_k$ such that
$$
e=e_H,
\ b=b_H,
\ \delta=\delta_H,\text{ and } a_k=a_{H,k}
\quad\text{ for all }
H\in \A.
$$
}
\end{remark}

%%%%%%%%%%%%%%%%%%%%%%%%%%%%%%%%%%%%%%%%%
\subsection*{Twisted wedge product}

We use the {\em twisted wedge product} of \cite{HartmannShepler}
on differential forms
invariant under
the action of 
a reflection group $G$
whose transvection root
spaces are maximal:
for $\omega,\omega'$ in $(S\ot \wedge V^*)^G$,
we set $\omega\curlywedge\omega':= \omega\cdot \omega'$
when rank $\om$ or rank $\om'$ is $0$ and
\begin{equation}
\label{twistedwedgeproduct}
\omega\curlywedge\omega'
\ :=\
%\frac{\omega\wedge\omega'}{Q %Q_{\det}\rule{0ex}{1.5ex}}%strut 
%\ =\
\frac{\omega\wedge\omega'}{Q^e
\rule{0ex}{1.5ex}}%strut 
\quad\text{ when rank }
\om,\om' \geq 1 
\, ,
\end{equation}
for $e=e_H$
for all $H$ in $\A$ 
(see \cref{actstransitively}
and \cref{OneOrbit}).
Here, we use the fact that
$Q^e$ in $S$ divides 
$\omega \wedge \om'$ 
for all $\omega$, $\omega'$
in $(S\ot \wedge V^*)^G$
of rank at least $1$ (see \cite{HartmannShepler}).
For $\om_1,\ldots, \om_n$ in $S\otimes V^*$ 
and nonempty
$I=\{ i_1,\ldots, i_k\}\subset[n]$ with
$i_1< \ldots< i_k$,
we set
\begin{equation}
\label{curlywedgeomega}
\omega^{\curlywedgeup}_I
:=\omega_{i_1}\curlywedge\cdots\curlywedge\omega_{i_k}= \frac{\omega_{i_1}\wedge\cdots\wedge \omega_{i_k}}{Q^{\,e(k-1)}}
=\frac{\om_I}{Q^{\, e(k-1)}}
\, 
\end{equation}
and 
$\om^{\curlywedgeup}_I
=\om_{i_1}\curlywedge\cdots\curlywedge \om_{i_k}
:=1\ot 1$ for $k=0$, $I=\varnothing$. 

We next use \cref{saitoderivations}
to slightly strengthen Theorem 10 of \cite{HartmannShepler},
noting that the arguments in its proof
hold even when $S^G$ is not a polynomial ring. 
We use the 
free exterior algebra
from \cite{HartmannShepler}
$$
\bigcurlywedge_{S^G}
\{ \om_1,\ldots, \om_n\}
=
S^G\!\text{-span}
\{ \om_{i_1}\curlywedge\dots\curlywedge
\om_{i_k}:
1\leq i_1<\ldots<i_k\leq n
\}
=
S^G\!\text{-span}
\{\om_I^{\curlywedgeup}: I\subset[n]\} .
$$
\vspace{-3ex}

\begin{theorem}
\label{twistedalgebra}
Let $G\subset \GL(V)$ be a reflection group
with transvection root spaces all maximal. 
Suppose $(S\ot V^*)^G$ is a free $S^G$-module
with basic $1$-forms $\omega_1,\dots,\omega_n$.
Then
$\omega_1,\dots,\omega_n$ generate $(S\otimes \wedge V^*)^G$ 
as a free exterior algebra over $S^G$ via the twisted wedge product:
$$
(S\otimes \wedge V^*)^G
=
\bigcurlywedge_{S^G}
\{ \om_1,\ldots, \om_n\}
\, .
$$
\end{theorem}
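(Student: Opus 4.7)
The plan is to adapt the argument of Theorem~10 in \cite{HartmannShepler}, replacing its appeal to Hartmann's Solomon-type theorem (which requires $S^G$ to be a polynomial ring) by the Saito criterion \cref{saitoderivations}, which holds more generally. I would verify, rank by rank, that the twisted wedges $\{\om_I^{\curlywedgeup} : I\subset[n]\}$ form a free $S^G$-basis of $(S\otimes \wedge V^*)^G$.

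First, I would confirm that each $\om_I^{\curlywedgeup}$ with $|I|=k\geq 1$ lies in $(S\otimes \wedge^k V^*)^G$. The wedge $\om_I$ is $G$-invariant as a product of $G$-invariants, and the key divisibility established in \cite{HartmannShepler} for reflection groups with maximal transvection root spaces gives $Q^{e(k-1)} \mid \om_I$ in $S\otimes \wedge^k V^*$, so $\om_I^{\curlywedgeup}$ is a well-defined differential form. Moreover, $Q^e$ is $G$-invariant: by \cref{actstransitively} and \cref{OneOrbit}, all $e_H$ agree with a common value $e$, so the semi-invariance character of $Q$ has order dividing $e$. Hence $\om_I^{\curlywedgeup}\in (S\otimes \wedge^k V^*)^G$. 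Next, I would establish that $\{\om_I^{\curlywedgeup}: I\subset[n]\}$ is $S^G$-independent: because $\om_1,\ldots,\om_n$ form a basis of $(S\otimes V^*)^G$, they are $\F(S)$-independent by \cref{saitoderivations}(c); a standard exterior-algebra argument then gives $\F(S)$-independence of $\{\om_I : I\subset[n]\}$, and dividing by the nonzero polynomial $Q^{e(k-1)}$ preserves this. Independence over $S^G \subset \F(S)$ follows a fortiori.

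For the spanning step I would mimic the Cramer's-rule argument appearing in the remark after \cref{saitotheorem}. Fix $k$ and set $\B_k = \{\om_I^{\curlywedgeup} : I\in \tbinom{[n]}{k}\}$, a set of the correct cardinality $\tbinom{n}{k} = \dim_\FF \wedge^k V^*$. Given $\eta \in (S\otimes \wedge^k V^*)^G$, write $\eta = \sum_I h_I\, \om_I^{\curlywedgeup}$ uniquely with $h_I\in \F(S)$; invariance of $\eta$ and of each $\om_I^{\curlywedgeup}$, together with Cramer's rule expressing each $h_I$ as the ratio of two semi-invariants sharing the same character (the character afforded by $\Wedge^{\binom{n}{k}}(\wedge^k V^*)$), forces $h_I \in \F(S)^G$. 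A divisibility check at each reflecting hyperplane — that $\det\coef(\B_k)$ divides the replacement determinant $\det\coef(\B_k)_{(i)}$ in $S$ — then upgrades $h_I$ to an element of $S\cap \F(S)^G = S^G$.

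The main obstacle I expect is this last divisibility check: computing the exact $\ell_H$-valuation of $\det\coef(\B_k)$ at each $H\in\A$. The top-wedge case $k=n$ follows immediately from \cref{saitoderivations}(b), which gives $\om_1\wedge\cdots\wedge\om_n \doteq Q(\tilde{\A}) Q_{\det}\volx = Q^{en-1}\volx$ and hence $\om_{[n]}^{\curlywedgeup} \doteq Q^{e-1}\volx = Q_{\det}\volx$. For intermediate $k$, the computation reduces to a local analysis at a single reflecting hyperplane: use the basis of \cref{ConvenientBasis} together with the explicit basic $1$-forms constructed in the proof of \cref{BasicDerivationsOneHyperplane} to check that the coefficient-matrix determinant of $\B_k$ is a pure power of $\ell_H$ with the predicted exponent, as required by \cite[Theorem~3]{BroerChuai}. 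Since the basic $1$-forms at a single hyperplane are completely explicit and the wedge structure simplifies dramatically when $b_H = n-1$, this local verification is a direct calculation — and it is precisely the step at which the original \cite{HartmannShepler} argument invoked Hartmann's theorem and hence the polynomial-ring hypothesis, so bypassing that hypothesis via \cref{saitoderivations} is the only genuinely new content.
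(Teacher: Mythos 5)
Your proposal is correct and follows essentially the same route as the paper, whose entire proof of \cref{twistedalgebra} is the observation that the argument for Theorem~10 of \cite{HartmannShepler} (a Solomon-type Cramer's-rule argument resting on the divisibility $Q^e \mid \omega\wedge\omega'$ and the coefficient-determinant computation $\det\coef(\omega_1,\dots,\omega_n)\doteq Q^{en-1}$ from \cref{saitoderivations}) carries over verbatim once the polynomial-ring hypothesis on $S^G$ is replaced by the Saito criterion. Your reconstruction of that argument — invariance of the $\omega_I^{\curlywedgeup}$, $\F(S)$-independence, and the semi-invariance-plus-divisibility upgrade of the Cramer coefficients to $S^G$ — is exactly what the cited proof does.
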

%%%%%%%%%%%%%%%%%%%%%%%%
\begin{remark}
\label{q-Catalan}{\em 
Results of \cite{HartmannShepler}
and \cref{twistedalgebra} 
suggest an analog
of the {\em $\mathsf{q}$-Catalan number} 
for
groups $G$ with $\SL_n(\FF_q)\subset G \subset \GL_n(\FF_q)$ 
(see \cref{slglsection}) and more generally 
reflection groups 
$G\subset \GL_n(\FF)$
with maximal transvection root spaces.
For such groups with exponents $m_1, \ldots, m_n$ and $S^G=\FF[f_1, \ldots, f_n]$ for $f_i$ homogeneous
of degree $d_i$,
$$
{\rm Hilb}
\big( 
(S\ot \Wedge V^*)^G, 
\mathsf{q},\mathsf{t} \big)
\ =\ 
\frac{ (1-\mathsf{q}^{e|\A|})
+ \mathsf{ q}^{e|\A|}\prod_{i=1}^n
(1
+\mathsf{ q}^{m_i-e|\A|}{
\mathsf{t}})
\rule[-1ex]{0ex}{2ex}}
{ \prod_{i=1}^n 
( 1 - {\mathsf{ q}}^{d_i} )
\rule{0ex}{2ex}}
\, 
$$
is the Hilbert series of the bigraded $\FF$-vector space
of invariant differential forms,
$$
\text{Hilb}
\big( (S\ot \Wedge V^*)^G,
\mathsf{q}, \mathsf{t}\big)\
:=
\sum_{i\geq 0,\ k\geq 0}
\dim_{\FF}
(S_i\ot \wedge^k V^*)^G 
\
\mathsf{q}^i\, \mathsf{ t}^k\, .
$$
For a real reflection group $G$
with Coxeter number $h$, one takes $\mathsf{t}=-\mathsf{q}^{h+1}$
to recover the $\mathsf{q}$-Catalan number for $G$ (see~\cite{BerestEtingofGinzburg2003,Gordon2003}).
For an extension to complex reflection groups, see
~\cite{GordonGriffeth}
(and also \cite{Stump}). Here
one might consider
$h=e|\A|$ (see
 \cref{CoxeterNumber}).
 }
\end{remark}
%%%%%%%%%%%%%%%%%%%%%%%%%%%%%%%%%%%%%%%%%%%%
\subsection*{Semi-invariant differential forms}
Let $\chi:G\rightarrow \FF^\times$
be a linear character of a reflection group $G$.
Then $\chi$ must be the identity on all transvections in $G$
as they have order $p=\characteristic\FF$. 
This implies that 
Stanley's argument \cite{Stanley} (see also \cite{Nakajima},\cite{Smith2006})
for reflection groups acting over $\CC$
extends to actions
over $\FF$ to show that
\begin{equation}
    \label{SemiinavariantPolys}
S^G_{\chi}=Q_{\chi}\, S^G
\, 
\end{equation}
for the polynomial
$Q_{\chi}:=\prod_{H\in\A}\ell_H^{\, c_H}$,
where $c_H$ is 
the smallest nonnegative integer such that $\chi(s_H)=\det^{-c_H}(s_H)$ for each $H\in \A$.
Here, $s_H$ again is a diagonalizable reflection about $H$ of maximal order $e_H$ when $e_H>1$ and the identity otherwise.
Thus for $k=0$, 
$(S\ot \Wedge^k V^*)^G_{\chi}=S^G\, (Q_{\chi}\ot 1)$.  We give the structure
for $k>0$ next.

\begin{proposition}
\label{DetInverseInvariantForms}
Let $G\subset\GL(V)$ be a reflection group whose transvection root
spaces are all maximal. 
Then
$(S\ot \Wedge^k V^*)^G
\subset 
Q_{\chi^{-1}} (S\ot \Wedge^k V^*)$
for 
any linear character $\chi: G\rightarrow
\FF^\times$
and
$$
(S\ot \Wedge^k V^*)^G_{\chi}
=
\frac{1}{Q_{\chi^{-1}}}(S\ot \Wedge ^k V^*)^G
\, 
\quad\text{ for }
k>0\, .
$$
\end{proposition}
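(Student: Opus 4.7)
The plan is to establish the divisibility in the first claim and then derive the equality for $k > 0$ as an easy consequence. First observe that the exponent of $\ell_H$ in $Q_{\chi^{-1}}$ is at most $e_H - 1$ for any linear character $\chi$, since $\det(s_H)$ has order $e_H$; hence $Q_{\chi^{-1}}$ divides $Q_{\det} = \prod_{H\in\A}\ell_H^{e_H-1}$, and it suffices to prove that every invariant $k$-form with $k \geq 1$ is divisible by $Q_{\det}$. Since the linear forms $\ell_H$ are pairwise coprime, this reduces to showing divisibility by $\ell_H^{e_H-1}$ at each $H \in \A$, and since $(S \ot \Wedge^k V^*)^G \subset (S \ot \Wedge^k V^*)^{G_H}$, one may further reduce to the pointwise stabilizer $G_H$, which is itself a reflection group with a single reflecting hyperplane $H$ and maximal transvection root space.

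Applying \cref{BasicDerivationsOneHyperplane} and \cref{twistedalgebra} to $G_H$ produces an $S^{G_H}$-basis of $(S \ot \Wedge^k V^*)^{G_H}$ given by the twisted wedges $\omega_I^{\curlywedgeup} = \omega_I/\ell_H^{e_H(k-1)}$ for $I \in \tbinom{[n]}{k}$, built from the explicit basic $1$-forms $\omega_1,\dots,\omega_n$. In the basis of \cref{ConvenientBasis} each basic $1$-form factors as $\omega_i = \ell_H^{e_H-1}\tilde\omega_i$ with $\tilde\omega_i = x_n\ot x_i - x_i\ot x_n$ for $i<n$ and $\tilde\omega_n = 1\ot x_n$. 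Expanding $\tilde\omega_I = \bigwedge_{i\in I}\tilde\omega_i$, every non-vanishing term uses $x_n = \ell_H$ as a wedge factor at most once (otherwise $x_n\wedge x_n = 0$), so $x_n$ appears as a polynomial factor in all but at most one of the $k$ tensor positions. This yields divisibility of $\tilde\omega_I$ by $\ell_H^{k-1}$, divisibility of $\omega_I$ by $\ell_H^{ke_H-1}$, and divisibility of $\omega_I^{\curlywedgeup}$ by $\ell_H^{e_H-1}$. Any $S^{G_H}$-linear combination inherits this divisibility, establishing the first claim.

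The equality for $k > 0$ now follows routinely. Given $\omega \in (S \ot \Wedge^k V^*)^G$, the first claim ensures $\omega/Q_{\chi^{-1}} \in S \ot \Wedge^k V^*$, and for $g \in G$ we compute $g(\omega/Q_{\chi^{-1}}) = \omega/(\chi^{-1}(g)Q_{\chi^{-1}}) = \chi(g)\cdot (\omega/Q_{\chi^{-1}})$, giving the inclusion $\supseteq$. Conversely, if $\eta \in (S \ot \Wedge^k V^*)^G_\chi$, then $Q_{\chi^{-1}}\eta$ is $\chi^{-1}\chi$-semi-invariant, that is, invariant, so $\eta = (Q_{\chi^{-1}}\eta)/Q_{\chi^{-1}}$ lies in $(1/Q_{\chi^{-1}})(S \ot \Wedge^k V^*)^G$. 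The main obstacle is the single-hyperplane divisibility in the second paragraph, which rests on the explicit structure of the basic $1$-forms together with the simple count yielding the $\ell_H^{k-1}$-divisibility of $\tilde\omega_I$.
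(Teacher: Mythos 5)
Your proof is correct, and while its skeleton matches the paper's --- reduce to showing every invariant $k$-form with $k\geq 1$ is divisible by $\ell_H^{\,e_H-1}$ for each $H\in\A$, conclude divisibility by $Q_{\det}$ and hence by $Q_{\chi^{-1}}$, then get the stated equality from the two routine semi-invariance inclusions --- the key divisibility step is carried out by a genuinely different mechanism. The paper takes an arbitrary invariant $\omega=\sum_I f_I\ot x_I$ in the basis of \cref{ConvenientBasis} and reads off the divisibility of the coefficients $f_I$ from \cite[Lemma~4]{HartmannShepler} (for $n\notin I$) together with an equation inside that lemma's proof (for $n\in I$). You instead restrict to the pointwise stabilizer $G_H$ --- correctly noting that it is itself a reflection group with the single reflecting hyperplane $H$ and maximal transvection root space, so that \cref{BasicDerivationsOneHyperplane} and \cref{twistedalgebra} apply to it --- and expand any $G_H$-invariant in the resulting twisted-wedge basis $\{\omega_I^{\curlywedgeup}\}$, extracting the factor $\ell_H^{\,e_H-1}$ from each basis element via the factorization $\omega_i=\ell_H^{\,e_H-1}\tilde\omega_i$ and the observation that at most one factor of $\tilde\omega_I$ can contribute $x_n$ to the wedge component. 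Your route stays entirely within results stated in this paper rather than reaching into the internals of a proof in \cite{HartmannShepler}, and it yields slightly more (the exact $\ell_H$-divisibility of each $\omega_I^{\curlywedgeup}$, uniformly in whether $n\in I$); the paper's version is shorter because it delegates the coefficient analysis to the cited lemma. Both arguments are sound, and your concluding two-inclusion argument for the displayed equality coincides with the paper's.
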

\begin{proof}
Fix $\om\in(S\ot \Wedge^k V^*)^G$
and let $H=\ker \ell_H$ be a reflecting hyperplane
of $G$.
We use the basis $v_1,\dots,v_n$ of $V$ and $x_1,\dots,x_n$ of $V^*$ of \cref{ConvenientBasis} and write $\om=\sum_I f_I\ot x_I$ for some $f_I\in S$.
By \cite[Lemma~4]{HartmannShepler}, $f_I$ is divisible by $\ell_H^{\, e_H}$ when $n\notin I$ since $b_H=n-1$ and $k>0$.
Additionally, the last equation in the proof shows that $f_I$ is divisible by
$\ell_H^{\,e_H-1}$ when $n\in I$.
So 
$\omega$ lies in
$\ell_H^{\,e_H-1}S\ot \Wedge V^*$
and, as $H$ was arbitrary,
in $Q_{\det}S\ot \Wedge V^*$.
But
 $Q_{\chi^{-1}}$ divides $Q_{\det}$,
so $\om$ lies in
$Q_{\chi^{-1}}S\ot \Wedge V^*$.
This implies that $(S\ot \wedge^k V^*)^G_{\chi}
\supset (S\ot \wedge^k V^*)^G/ Q_{\chi^{-1}}$.
The reverse inclusion follows from the fact that
$Q_{\chi^{-1}}(S\ot \Wedge^k V^*)^G_{\chi}
\subset
(S\ot \Wedge ^k V^*)^G$.
\end{proof}

%%%%%%%%%%%%%%%%%%%%%%%%%%%%%%%%%%%%%%%
%%%%%%%%%%%%%%%%%%%%%%%%%%%%%%%%%%%%%%%
\section{Dualizing Derivations and $1$-forms}
\label{dualizingsection}

In this section, 
we show that 
$(S\ot V)^G$ is free
if and only if
$(S\ot V^*)^G$ is free 
over $S^G$
for a reflection
group $G\subset\GL(V)$ 
acting on $V=\FF^n$
with transvection root spaces
maximal
and give
a duality between exponents
and coexponents.
Indeed,
we show how to construct dual invariant 
$1$-forms from invariant derivations and vice versa.
This in turn 
gives a condition for the invariant differential
forms to be generated as a
free exterior algebra
under the twisted wedge product.
We apply these
results to $\GL_n(\FF_q)$
and $\SL_n(\FF_q)$
in \cref{slglsection}.

%%%%%%%%%%%%%%%%%%%%%%%%%%%%%%%%%%%%%%%%%
\subsection*{Perfect pairing}
We use a familiar perfect pairing 
between $\wedge^k V$ and 
$\wedge^{\! n-k}\, V^*
\cong(\wedge^{\! n-k}\,V)^*$ related to the Hodge star operator:
let $\Phi$ be the isomorphism
$$
\begin{aligned}
\Phi: \wedge^k V  &\rightarrow 
\wedge^{n-k}\, V^*,
\quad
\beta&\mapsto \
\Big(\beta' \mapsto \frac{\beta'\wedge \beta}{\volv}
\Big)\, ,
\end{aligned}
$$
where $\volv=v_1\wedge\cdots\wedge v_n$
is the volume form
for a fixed choice of basis $v_1, \ldots, v_n$ of $V$.

\begin{remark}
\label{alternatingsigns}
{\em
Let $x_1,\dots,x_n$ be the basis of $V^*$ dual to the basis $v_1,\dots,v_n$ of $V$. 
For a subset $I=\{i_1,\dots,i_k\}$ 
of $[n]=\{1,\dots,n\}$,
set $I^c=[n]\backslash I$, the complementary subset.
Then
$\Phi(v_I)= x_{I^c}$ (see \cref{volumes})
up to a sign 
given by the Levi-Civita symbol.
In particular, 
$$
\Phi(v_1\wedge\cdots\wedge \widehat{v_j}\wedge\cdots\wedge v_n)=(-1)^{j+1}\, x_j
\, .
$$
}
\end{remark}

\subsection*{Dualizing map is semi-invariant}
We extend $\Phi$ to a function 
$\wedge V\rightarrow \wedge V^*$.
Note that $\Phi$ is a
skew $\GL(V)$-homomorphism
with respect
to the character $\det=\det_V$
of $\GL(V)$:
%%%%%%%%%%%%%%%%%%%%%%
\begin{lemma}
\label{skewrelationship}
The dualizing map $\Phi$ is
$(\det^{-1})$-invariant:
for any $g$ in $\GL(V)$,
$$
g(\Phi)={\det}^{-1}(g) \, \Phi\, ,
\quad{i.e., }\ \ 
g\circ \Phi ={\det}^{-1}(g)\ \Phi\circ g
\, .
$$
\end{lemma}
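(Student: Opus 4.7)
The plan is to verify the identity $g \circ \Phi = \det^{-1}(g)\, \Phi \circ g$ pointwise by evaluating each side at an arbitrary $\beta \in \wedge^k V$ and then pairing the resulting elements of $\wedge^{n-k} V^*$ against an arbitrary $\beta' \in \wedge^{n-k} V$. Since both sides are $\FF$-linear, checking on decomposable wedges (or equivalently on a basis) suffices, and the calculation itself is purely bilinear algebra.

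First, I would unpack the left-hand side $g(\Phi(\beta))$ using the contragredient action on $\wedge V^*$, namely $(g \omega)(w) = \omega(g^{-1} w)$, together with the defining formula for $\Phi$; this produces the scalar $(g^{-1}\beta' \wedge \beta)/\volv$. Then I would unpack the right-hand side $\Phi(g\beta)$ directly from the definition, producing $(\beta' \wedge g\beta)/\volv$.

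The crux of the argument is the transformation law $g(\volv) = \det(g)\volv$ for the top exterior power. Writing $\beta' \wedge g\beta = c\, \volv$ for a scalar $c \in \FF$, applying $g^{-1}$ to both sides and using that $g^{-1}$ is an algebra homomorphism of $\wedge V$ yields $g^{-1}\beta' \wedge \beta = c\, \det(g)^{-1}\, \volv$, which rearranges to
$$
\frac{g^{-1}\beta' \wedge \beta}{\volv}
\ =\
\det(g)^{-1}\, \frac{\beta' \wedge g\beta}{\volv}\, ,
$$
matching the two pairings and closing the argument.

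I don't expect a substantive obstacle here; the only mild bookkeeping is keeping the conventions straight for the two simultaneous actions (direct on $\wedge V$, contragredient on $\wedge V^*$) and verifying that the sign arising from permuting $g^{-1}\beta'$ and $\beta$ in $\wedge^n V$ cancels the corresponding sign on the other side, so that the $\det^{-1}(g)$ factor is the only scalar that appears.
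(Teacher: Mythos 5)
Your proposal is correct and follows essentially the same route as the paper's own proof: both evaluate the pairing against an arbitrary $\beta'\in\wedge^{n-k}V$, use the contragredient action on $\wedge V^*$, and reduce everything to the transformation law $g(\volv)=\det(g)\,\volv$. (The sign concern at the end is moot: applying $g^{-1}$ to $\beta'\wedge g\beta$ yields $g^{-1}\beta'\wedge\beta$ with the factors already in matching order, so no transposition and hence no sign ever arises.)
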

\begin{proof}
Fix $k$ and recall that $g(\volv)=\det(g)\volv$. 
For any $\beta\in\wedge^k V$ and $\beta'\in\wedge^{n-k} V$, 
$$
\big((g\Phi)(\beta)\big)(\beta')
=
\big(g(\Phi( g^{-1}\beta))\big)(\beta')
=
g\big(\Phi( g^{-1}\beta)(g^{-1}\beta')\big)
=g\Big(\frac{g^{-1}\beta'\wedge g^{-1}\beta}{\volv}\Big)
=\frac{\beta'\wedge \beta}{\det(g)\volv}\, ,
$$
which is just
${\det}^{-1}(g)\, \Phi(\beta)(\beta')
$.
\end{proof}
%%%%%%%%%%%%%%%%%%%%%%%%%%%%%%%

%%%%%%%%%%%%%%%%%%%%%%%%%%%%%%%%%%%%%%%%%
\subsection*{Dual 1-forms}
We use this perfect pairing to construct $G$-invariant $1$-forms from $G$-invariant derivations via the linear map
$$
1\ot \Phi: S\otimes \wedge V \rightarrow S\otimes \wedge V^*
\, .
$$
\begin{proposition}
\label{dualderivations}
Consider a reflection group
$G\subset \GL(V)$.
Suppose $(S\otimes V)^G$ is a free $S^G$-module 
with basic derivations
$\theta_1,\dots,\theta_n$.
For each $1\leq i\leq n$, define
dual 1-forms
$$
\omega_i=\theta_i^*:=
(1\otimes \Phi) \Big(Q_{\det}\,\theta_1 \wedge \cdots \wedge \widehat{\theta}_{i} \wedge \cdots \wedge \theta_n\Big)
\quad \in S\otimes V^*
\, .
$$ 
Then $\omega_1\dots,\omega_n$
are a basis of $(S\ot V^*)^G$ as a free $S^G$-module 
if and only if
the transvection root spaces 
of $G$ are all maximal,
in which case
$\om_1,\dots,\om_n$
also generate 
$(S\ot \wedge V^*)^G$
as a free exterior algebra over $S^G$ 
via the twisted wedge product of \cref{twistedwedgeproduct}.
\end{proposition}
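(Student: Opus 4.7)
The plan is first to verify that each $\omega_i$ lies in $(S\ot V^*)^G$, next to compute $\det\coef(\omega_1,\ldots,\omega_n)$ by evaluating the natural pairing $\omega_i(\theta_j)$, and finally to invoke the Saito criterion for invariant $1$-forms from \cref{saitoderivations}.

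For invariance, I would fix $g\in G$. Each $\theta_j$ is $G$-invariant, so the omitted wedge $\theta_1\wedge\cdots\widehat\theta_i\cdots\wedge\theta_n$ lies in $(S\ot \wedge^{n-1} V)^G$. Since $G$ permutes the reflecting hyperplanes while preserving the exponents $e_H$ along each orbit, \cref{SemiinavariantPolys} yields $g(Q_{\det})=\det(g)\,Q_{\det}$. The map $1\ot\Phi$ inherits the semi-invariance $g\circ(1\ot \Phi)=\det^{-1}(g)\,(1\ot\Phi)\circ g$ from \cref{skewrelationship}, and combining these three facts gives $g(\omega_i)=\det^{-1}(g)\det(g)\,\omega_i=\omega_i$.

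For the determinant, let $F=\coef(\theta_1,\ldots,\theta_n)$ and $\widetilde G=\coef(\omega_1,\ldots,\omega_n)$. Extending the natural pairing $V^*\otimes V\to\FF$ to $S$-valued entries, we have $\omega_i(\theta_j)=(\widetilde G F^{T})_{ij}$. Unwinding the definition of $\Phi$ gives
\[
\omega_i(\theta_j)
=
\frac{Q_{\det}\cdot \theta_j\wedge\theta_1\wedge\cdots\widehat\theta_i\cdots\wedge\theta_n}{\volv}\, ,
\]
which vanishes for $j\neq i$ (repeated factor) and equals $(-1)^{i-1}Q_{\det}\det(F)$ for $j=i$. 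Hence $\widetilde G\, F^T$ is diagonal with $\det\doteq Q_{\det}^{\,n}\det(F)^n$, forcing $\det\widetilde G\doteq Q_{\det}^{\,n}\det(F)^{n-1}$. By \cref{saitoderivations} applied to the $\theta_i$, $\det(F)\doteq Q$, so that $\det\widetilde G\doteq Q_{\det}^{\,n}\,Q^{n-1}$.

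By \cref{saitoderivations} applied to the $\omega_i$ (which are $\F(S)$-independent since $\det\widetilde G\neq 0$), the set $\{\omega_1,\ldots,\omega_n\}$ is an $S^G$-basis of $(S\ot V^*)^G$ if and only if $\det\widetilde G\doteq Q(\tilde\A)\,Q_{\det}$. Comparing exponents of each linear form $\ell_H$ in the candidate equality $Q_{\det}^{\,n} Q^{n-1}\doteq Q(\tilde\A)\,Q_{\det}$ reduces to $ne_H-1=e_H(b_H+1)-1$, i.e., $b_H=n-1$, for every $H\in\A$ — precisely the maximal-transvection-root-space hypothesis. The final assertion is then immediate from \cref{twistedalgebra}. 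I expect the main subtlety to be the semi-invariance of $Q_{\det}$ for all of $G$ (and not merely the reflection subgroup $W$), but this is handled by noting that $W\trianglelefteq G$ and $G$-conjugation preserves $e_H$ along orbits of $\A$, so $g(Q_{\det})$ is $W$-semi-invariant of the same degree as $Q_{\det}$ with character $\det$ and hence differs from $Q_{\det}$ by the required scalar $\det(g)$.
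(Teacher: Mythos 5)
Your proposal is correct and follows essentially the same route as the paper: establish $G$-invariance of the $\omega_i$ from the $\det$-semi-invariance of $Q_{\det}$ and the $\det^{-1}$-semi-invariance of $\Phi$, compute $\det\coef(\omega_1,\dots,\omega_n)\doteq Q_{\det}^{\,n}Q^{n-1}$ (your pairing identity $\widetilde G F^{T}=$ diagonal is exactly the paper's cofactor identity $C^{t}A=(\det A)I$ in different notation), and compare exponents against the Saito criterion of \cref{saitoderivations}, with the last assertion from \cref{twistedalgebra}. The only cosmetic omission is the paper's ``without loss of generality the $\theta_i$ are homogeneous,'' needed to apply \cref{saitoderivations} as stated.
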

\begin{proof}
Since $G$ is a reflection group,
$Q_{\det}$ is $\det$-invariant (see \cref{Qdet}),
and hence
each $\om_i$ is indeed
invariant by \cref{skewrelationship}
as the $\theta_i$ are invariant.
We assume without loss of generality that the $\theta_i$ are homogeneous.
Let $A$ be the coefficient matrix $\coef(\theta_1, \ldots, \theta_n)$
so $\det A \doteq Q$ by \cref{saitoderivations}.
The determinant of the minor matrix $A_{ij}$ is precisely the
polynomial coefficient of $v_1\wedge\cdots\wedge 
\widehat{v}_j\wedge\cdots\wedge v_n$ in $\theta_1\wedge\cdots\wedge\widehat{\theta}_i\wedge\cdots\wedge\theta_n$.
We replace each $\omega_i$ by $(-1)^{i+1}\, \omega_i$ so that the sign changes coincide with those for the cofactors $c_{ij}$ in the cofactor matrix $C=(\det A) A^{-t}$ of $A$ (see \cref{alternatingsigns}). Then
$$
\text{each} \ \ 
\omega_i
= 
\sum_{j=1}^n
Q_{\det}\, c_{ij}\otimes x_j
\qquad\text{ and }\ \ 
\coef(\om_1,\ldots,\om_n)
=
Q_{\det}\, C
\doteq
Q\, Q_{\det}\, A^{-t}
\, .
$$
Thus
$\det\coef(\om_1\ldots, \om_n)\doteq Q^n Q_{\det}^n \det A^{-t}
\doteq Q^{n-1}\, Q_{\det}^n$,
which equals $ Q(\tilde{\A})\,Q_{\det}$ 
exactly when the transvection roots
spaces are maximal, i.e., 
$b_H=n-1$ for all $H\in\A$. The claim then follows from \cref{saitoderivations,twistedalgebra}.
\end{proof}

%%%%%%%%%%%%%%%%%%%%%%%%5
\subsection*{Dual derivations}
Alternatively,
one may use $\Phi^{-1}$ to
define dual derivations.
We provide a brief proof in the same style as 
that for \cref{dualderivations}.

\begin{proposition}
\label{alternateduality}
Consider a reflection group $G\subset \GL(V)$
for $\dim V=n>1$
with transvection root spaces all maximal.
Suppose $(S\ot V^*)^G$ is a free $S^G$-module
with basic $1$-forms
$\omega_1,\dots,\omega_n$.
For each $1\leq i\leq n$, define dual derivations
$$
\theta_i=\omega_i^*:=
(1\ot\Phi^{-1})
\Big(\frac{
\omega_1\curlywedge\cdots\curlywedge\widehat{\omega_i}\curlywedge\cdots\curlywedge\omega_n}
{Q_{\det}}
\Big)
\, .
$$
Then $\theta_1,\dots,\theta_n$ are a basis of $(S\ot V)^G$ as a free $S^G$-module.
\end{proposition}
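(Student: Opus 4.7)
The plan is to mirror the proof of \cref{dualderivations}, but using $\Phi^{-1}$ to dualize invariant $1$-forms into invariant derivations. Specifically, I would verify that (i) each $\theta_i$ lies in $S\ot V$, (ii) each $\theta_i$ is $G$-invariant, and (iii) $\det\coef(\theta_1,\ldots,\theta_n)\doteq Q$, after which the Saito criterion \cref{saitoderivations} concludes the proof. For (i), the twisted wedge $\omega_1\curlywedge\cdots\widehat{\omega_i}\cdots\curlywedge\omega_n$ lies in $(S\ot\wedge^{n-1}V^*)^G$ by \cref{twistedalgebra}, and since $n-1\geq 1$, \cref{DetInverseInvariantForms} applied with $\chi=\det^{-1}$ (so that $Q_{\chi^{-1}}=Q_{\det}$) shows every $G$-invariant form of positive rank is divisible by $Q_{\det}$; hence the quotient inside $\Phi^{-1}$ already lies in $S\ot\wedge^{n-1}V^*$, so $\theta_i\in S\ot V$. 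For (ii), $Q_{\det}$ is $\det$-semi-invariant by \cref{Qdet}, so dividing a $G$-invariant form by $Q_{\det}$ produces a $\det^{-1}$-semi-invariant; inverting the relation in \cref{skewrelationship} gives $g\circ\Phi^{-1}=\det(g)\,\Phi^{-1}\circ g$, and applying $\Phi^{-1}$ to a $\det^{-1}$-semi-invariant then yields a $G$-invariant.

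For (iii), assume the $\omega_i$ are homogeneous and let $A=\coef(\omega_1,\ldots,\omega_n)$; by \cref{saitoderivations} together with $b_H=n-1$ and $e_H=e$ for all $H\in\A$ (see \cref{OneOrbit}), we have $\det A\doteq Q^{e(n-1)}Q_{\det}$. The coefficient of $x_1\wedge\cdots\widehat{x_j}\cdots\wedge x_n$ in $\omega_1\wedge\cdots\widehat{\omega_i}\cdots\wedge\omega_n$ equals the $(i,j)$-minor of $A$ up to a sign, and $\Phi^{-1}$ sends this basis of $\wedge^{n-1}V^*$ to $v_1\wedge\cdots\widehat{v_j}\cdots\wedge v_n$ with alternating signs. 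After replacing $\theta_i$ by $\pm\theta_i$ so that signs align with the cofactors $c_{ij}$ in $C=(\det A)A^{-t}$, and using the identity $\omega_1\curlywedge\cdots\widehat{\omega_i}\cdots\curlywedge\omega_n=(\omega_1\wedge\cdots\widehat{\omega_i}\cdots\wedge\omega_n)/Q^{e(n-2)}$ from \cref{curlywedgeomega}, the coefficient matrix becomes
$$
\coef(\theta_1,\ldots,\theta_n)\doteq\frac{1}{Q^{e(n-2)}Q_{\det}}\,C.
$$
Since $\det C=(\det A)^{n-1}$ and $Q_{\det}=Q^{e-1}$ (as all $e_H=e$), together with the identity $e(n-1)^2-en(n-2)=e$, we obtain
$$
\det\coef(\theta_1,\ldots,\theta_n)\doteq\frac{(\det A)^{n-1}}{(Q^{e(n-2)}Q_{\det})^n}=\frac{Q^{e(n-1)^2}Q_{\det}^{n-1}}{Q^{en(n-2)}Q_{\det}^n}=\frac{Q^e}{Q_{\det}}=Q.
$$

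The $\theta_i$ are $\F(S)$-independent because this determinant is nonzero, so \cref{saitoderivations}(b) implies $\theta_1,\ldots,\theta_n$ is an $S^G$-basis of $(S\ot V)^G$, which is therefore free. The main obstacle is verifying polynomiality in step (i), since the construction openly divides by $Q_{\det}$; this is resolved precisely by the divisibility guaranteed by \cref{DetInverseInvariantForms} for $G$-invariant forms of rank $\geq 1$, which is why the hypothesis $n>1$ (ensuring $n-1\geq 1$) is needed. The remainder is careful sign bookkeeping between the Levi--Civita signs in $\Phi^{-1}$ and the cofactor expansion signs, handled exactly as in \cref{dualderivations}.
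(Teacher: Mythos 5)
Your proposal is correct and follows essentially the same route as the paper's own proof: polynomiality of the quotient via \cref{DetInverseInvariantForms}, $G$-invariance via the $\det$-semi-invariance of $Q_{\det}$ and \cref{skewrelationship}, and the cofactor-matrix computation showing $\det\coef(\theta_1,\ldots,\theta_n)\doteq Q$, concluding by \cref{saitoderivations}. The only cosmetic difference is that you evaluate $\det C=(\det A)^{n-1}$ directly, whereas the paper substitutes $C=(\det A)A^{-t}$ to get $\coef(\theta_1,\ldots,\theta_n)\doteq Q^eA^{-t}$ first; both yield the same determinant.
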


\begin{proof}
By  
\cref{DetInverseInvariantForms}, $Q_{\det}$
divides 
in $S\ot \Wedge V^*$
the indicated twisted wedge
product 
and the quotient
is $(\det^{-1})$-invariant.
By \cref{skewrelationship}, the map $\Phi^{-1}$
is itself $\det$-invariant and thus
each $\theta_i$ lies
$(S\ot V)^G$.
Consider the coefficient matrix
$A=\coef(\om_1,\ldots,\omega_n)$ and the cofactor matrix $C=\{c_{ij}\}=(\det A)A^{-t}$ of $A$.
We assume without loss of generality that the $\om_i$ are homogeneous.
Then
$\det A \doteq
Q^{en-1}$ by \cref{saitoderivations}
(as $b_H=n-1$ and $e=e_H$ for 
all $H\in \A$)
and,
after replacing $\theta_i$ by $(-1)^{i+1}\, \theta_i$ to match the sign changes of $C$,
\vspace{1ex}
$$
\text{each} \ \ 
\theta_i
=\sum_{j=1}^n \frac{c_{ij}}{Q^{e(n-2)}\,Q_{\det}}
\quad
\text{ and }\quad
\coef(\theta_1,\ldots,\theta_n)
=
\frac{C}{Q^{e(n-2)}\,Q_{\det}}
\doteq Q^eA^{-t}\, .
\vspace{1ex}
$$
The claim follows from \cref{saitoderivations}
since
$\det\coef(\theta_1,\ldots, \theta_n)= Q^{en}\det A^{-t}\doteq Q$.
\end{proof}

\begin{remark}
\label{dim1remark}
{\em
A version of
\cref{alternateduality}
also holds when $n=1$
provided we again apply
$(1\ot \Phi^{-1})$ to 
a generator of 
$(S\ot \wedge^{n-1}V^*)^G_{\det^{-1}}$ 
(see \cref{DetInverseInvariantForms}).
For $n=1$, we use the bases 
$v$ of $V$ and $x$ of $V^*$
as well as
the derivation $\theta$ and $1$-form $\om$ from \cref{dim1example}.
Here, 
$(S\ot \wedge^{n-1}V^*)^G_{\det^{-1}}
=S^G(Q\ot 1)$ 
since $Q=Q_{\det^{-1}}=x$,
see \cref{Qdet} or \cref{SemiinavariantPolys}, so we apply
$(1\ot\Phi^{-1})$
to $Q\ot 1$ to dualize $\om$:
\vspace{1ex}
$$\om^*=(1\ot\Phi^{-1})(Q\ot 1)
=Q\ot v=x\ot v =\theta
\, .
\vspace{1ex}
$$
Note that 
the dual 
of $\theta$ here by \cref{dualderivations}
is
just 
\vspace{1ex}
$$
\theta^*=(1\ot \Phi)(Q_{\det}\ot 1)=Q_{\det}\ot x=\om
\quad
\text{ for }
Q_{\det}=x^{e-1}
\, .
\vspace{1ex}
$$
} %end \em
\end{remark}

\subsection*{Duality of exponents and coexponents}

\cref{dualderivations,alternateduality} imply an analog of the
duality of {\em exponents}
and {\em coexponents}
(see \cite{OrlikTerao})
for
well-generated complex reflection groups.
Recall that for any finite group $G$, if $(S\ot V^*)^G$
is a free $S^G$-module,
the set of
polynomial degrees in a 
homogeneous basis does not
depend on choice of basis,
and likewise for $(S\ot V)^G$.
\begin{cor}
\label{DualityOfExponents}
Let $G\subset \GL(V)$ be 
a reflection group 
with transvection root spaces all maximal.
Then $(S\ot V)^G$ is a free $S^G$-module
if and only if
$(S\ot V^*)^G$ is a free $S^G$-module.
When both modules are free with respective homogeneous bases
of polynomial degrees
$m_1^*, \ldots, m_n^*$
and
$m_1, \ldots, m_n$,
then
\vspace{1ex}
$$m_i^* + m_i = e|\A|
\, ,
\vspace{1ex}$$
after possibly
reindexing,
where
$|\A|$ is the number
of reflecting hyperplanes of $G$
and $e$ is the maximal order of a
diagonalizable reflection in $G$.
\end{cor}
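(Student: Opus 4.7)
The plan is to read off both assertions directly from the dualization machinery already built in \cref{dualderivations,alternateduality}. For the equivalence of freeness, \cref{dualderivations} sends any basis $\theta_1,\dots,\theta_n$ of $(S\ot V)^G$ to a basis $\theta_1^*,\dots,\theta_n^*$ of $(S\ot V^*)^G$, and \cref{alternateduality} provides the reverse construction $\omega_i\mapsto\omega_i^*$. Both constructions preserve homogeneity (since $1\ot \Phi$ and $1\ot\Phi^{-1}$ do, wedge products add polynomial degrees, and the polynomials $Q$ and $Q_{\det}$ are homogeneous), so the equivalence of freeness is immediate.

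For the numerical duality, I would fix a homogeneous basis $\theta_1,\dots,\theta_n$ of $(S\ot V)^G$ of polynomial degrees $m_1,\dots,m_n$ and invoke \cref{dualderivations} to produce the dual homogeneous basis $\theta_1^*,\dots,\theta_n^*$ of $(S\ot V^*)^G$. Since polynomial degrees of basis elements of a free graded $S^G$-module are independent of the choice of homogeneous basis (graded Nakayama, as recalled in \cref{background}), the multiset $\{\deg\theta_i^*\}$ coincides with $\{m_1^*,\dots,m_n^*\}$. It therefore suffices to compute
$$
\deg\theta_i^* \;=\; \deg Q_{\det} + \sum_{j\neq i}m_j,
$$
which uses that $1\ot \Phi$ is degree-preserving and that wedging homogeneous derivations adds polynomial degrees. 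By \cref{saitoderivations}(c), $\sum_{j=1}^n m_j = \deg Q = |\A|$. By \cref{sameorbit} (see also \cref{OneOrbit}), the maximal transvection hypothesis forces all $e_H$ to share a common value $e$, so $\deg Q_{\det} = \sum_{H\in\A}(e_H-1) = (e-1)|\A|$. Combining these,
$$
\deg\theta_i^* \;=\; (e-1)|\A| + \bigl(|\A| - m_i\bigr) \;=\; e|\A| - m_i,
$$
and after matching the two bases via the pairing $i\leftrightarrow\theta_i^*$, we obtain $m_i + m_i^* = e|\A|$.

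There is no substantive obstacle beyond bookkeeping: the genuine work (that $\Phi$ is $(\det^{-1})$-skew-invariant, that $\theta_i^*$ truly lies in $(S\ot V^*)^G$ under the maximal transvection hypothesis and forms a basis) is already packaged in \cref{dualderivations}, and the Saito criterion supplies the total degree. The only delicate point to flag is that the formula $e|\A|$ only makes sense once one knows $e_H$ is independent of $H$, which is precisely where the single-orbit conclusion of \cref{sameorbit} is essential.
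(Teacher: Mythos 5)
Your proof is correct and follows essentially the same route as the paper: dualize via \cref{dualderivations} (and \cref{alternateduality} for the converse) and compute $\deg Q_{\det} + \sum_{j\neq i}\deg\theta_j = e|\A| - \deg\theta_i$, using the single-orbit fact to obtain the common value $e$. The only detail the paper adds is that \cref{alternateduality} is stated for $n>1$, so the reverse direction when $n=1$ is handled separately via \cref{dim1remark}.
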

\begin{proof}
Suppose $\theta_1, \ldots, \theta_n$
are a homogeneous $S^G$-basis of $(S\ot V)^G$.
Then the
dual $1$-forms $\om_1, \ldots, \om_n$ afforded by \cref{dualderivations}
give an $S^G$-basis of $(S\ot V^*)^G$ with
\vspace{1ex}
$$
m_i=\deg\om_i=\deg Q_{\det} + \sum_{j\neq i }
\deg \theta_j = \deg Q_{\det} + \deg Q
-\deg \theta_i
=e\deg Q-m_i^*
=e|\A|-m_i^*
\, ,
\vspace{1ex}
$$
since $\omega_i$ is dual to $\theta_i$ and $Q_{\det} Q =Q^e$.
Alternatively, if the $1$-forms $\om_1, \ldots, \om_n$
are an $S^G$-basis of $(S\ot V^*)^G$,
then the dual derivations 
$\theta_1, \ldots, \theta_n$
are an $S^G$-basis for $(S\ot V)^G$
by \cref{alternateduality}
(use \cref{dim1remark}
when $n=1$)
with again $m_i+m_i^*=e|\A|$.
\end{proof}

\begin{remark}\label{CoxeterNumber}
{\em 
Recall that the exponents $m_i$ and coexponents $m_i^*$ of a {\em duality (well-generated) complex
reflection group}
$G\subset \GL_n(\CC)$
(e.g. any Weyl or Coxeter group)
may be ordered so 
$$
m_i + m_i^* 
= 
\text{Coxeter number}
=
\deg f_n
=
\frac{\deg QJ}{n}
= 
\frac{\deg Q Q_{\det} Q(\tilde \A)}{n}
\, 
$$
as $\sum_i m_i^*=\deg Q$
and $\sum_i m_i=\deg J$
for the
Jacobian determinant 
$J=\det \{ 
\tfrac{\del f_i}{\del x_j} \}
\doteq
Q_{\det}
=Q_{\det}Q(\tilde{\A})$
as $G$ contains no transvections.
Here,
$f_1, \ldots, f_n$
are homogeneous
basic invariants
for $G$ ordered with nondecreasing degrees.
One thus may be tempted 
by \cref{DualityOfExponents}
to
regard the integer 
$$e |\A|
= 
(\text{maximal order of a diagonalizable
reflection})
 \cdot
(\# \text{reflecting hyperplanes})
$$
as the {\em Coxeter number}
of a reflection group $G\subset\GL_n(\FF)$ for arbitrary $\FF$ with
transvection root spaces all
maximal.
%Indeed,
In this case,
we use
$QQ_{\det} Q(\tilde{\A})$
in favor of the 
{\em discriminant}
$Q
J$
and note that
$$
\begin{aligned}
e |\A|
&= \frac{n e|\A|}{n}
=\frac{|\A| + (e-1)|\A| +
(n-1)e|\A|}{n}
= 
\frac{\deg Q Q_{\det} Q(\tilde \A)}{n}
\, .
\end{aligned}
$$
Reflection
groups with transvection root spaces all maximal 
thus may serve as
modular
analogues
of the 
duality (well-generated)
complex reflection groups (also see \cref{slglsection}).
}
\end{remark}

%%%%%%%%%%%%%%%%%%%%%%%%%%%%%%%%%%%%%%%
%%%%%%%%%%%%%%%%%%%%%%%%%%%%%%%%%%%%%%%
%%%%%%%%%%%%%%%%%%%%%%%%%%%%%%%%%%%%%%%
\section{Structure Theorem for Invariant Differential Derivations}
\label{structuresection}
We investigate the structure of $(S\ot \wedge V^*\ot V)^G$
when $G\subset\GL(V)$ 
is a reflection group with 
transvection root spaces
maximal.
Such is the case when $\SL_n(\FF_q)\subset G \subset \GL_n(\FF_q)$
(see \cref{slglsection})
or $G$ is
the pointwise stabilizer in $\SL_n(\FF_q)$ or $\GL_n(\FF_q)$ of a hyperplane in $V$,
for example.
Recall that
$(S\ot V)^G$ is free if and only if
$(S\ot V^*)^G$ is free over $S^G$ in this setting
(see \cref{DualityOfExponents}).
We start with basic derivations in
$(S\ot V)^G$
and use the dual $1$-forms in
$(S\ot V^*)^G$ afforded by \cref{dualderivations}
to construct an $S^G$-basis
for $(S\ot \wedge V^*\ot V)^G$.
Alternatively, we 
could instead construct
the same $S^G$-basis
for $(S\ot \wedge V^*\ot V)^G$
starting with
basic $1$-forms in $(S\ot V^*)^G$
and using the dual derivations in
$(S\ot V)^G$ 
from \cref{alternateduality}.
 
We suppose $\characteristic\FF\neq 2$ in this section and save the $\characteristic\FF=2$ case for \cref{char2}.
Recall that $V=\FF^n$ and
we write $I\in\tbinom{[n]}{k}$ when $I\subset[n]=\{1,\dots,n\}$ with $|I|=k$.

\begin{lemma}
\label{basisindependent}
Consider a reflection group $G\subset \GL(V)$ with transvection root spaces all maximal.
Suppose $(S\ot V)^G$ is a free $S^G$-module with basic derivations $\theta_1,\dots,\theta_n$ 
and dual 1-forms $\omega_1,\dots,\omega_n$. 
Then the following
two subsets
of $(S\otimes \wedge V^*\otimes V)^G$
of size $m=n\tbinom{n}{k}$
are $\F(S)$-independent
for any $r=1,\dots, n$:
\begin{small}
\begin{itemize}\setlength{\itemindent}{-4ex}
\item
$
\big\{d\theta_E\big\}
\cup
\big\{\omega_I^{\curlywedgeup}\, \theta_1,\dots,\omega_I^{\curlywedgeup}\, \theta_n:
I\subset[n]\big\}
\setminus 
\big\{\om_r \theta_r \big\}
\rule[-2ex]{0ex}{2ex}$%strut
\item
$
\big\{
\omega^{\curlywedgeup}_I\, \theta_1,
\ldots,
\omega^{\curlywedgeup}_I\, \theta_{r-1},
\omega^{\curlywedgeup}_I\, \theta_{r+1},
\ldots,
\omega^{\curlywedgeup}_I\, \theta_n\! :
I\subset[n],\,
r\in I \}
\cup
\big\{\omega^{\curlywedgeup}_I\, d\theta_E,
\omega^{\curlywedgeup}_I\,\theta_1,
\dots,
\omega^{\curlywedgeup}_I\,\theta_n\!\! :
I\subset[n],
r\notin I\big\}
\, .
$
\end{itemize}
\end{small}
\end{lemma}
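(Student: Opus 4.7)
The plan is to compare each proposed set, restricted to rank $k$, against the $\F(S)$-independent standard collection $\{\omega_I^{\curlywedgeup}\theta_j:I\in\tbinom{[n]}{k},\, 1\leq j\leq n\}$ from \cref{wholesetindependent}, by exhibiting an invertible change-of-basis matrix. The essential first ingredient is a duality identity I will establish:
$$
\sum_{s=1}^n\omega_s\theta_s\ \doteq\ Q^e\, d\theta_E\, .
$$
Letting $A=\coef(\theta_1,\ldots,\theta_n)$ with $\det A\doteq Q$ by \cref{saitoderivations} and using $\coef(\omega_1,\ldots,\omega_n)\doteq Q_{\det}(\det A)\,A^{-t}$ from \cref{dualderivations}, the matrix identity $A\cdot\bigl[(\det A)A^{-t}\bigr]^t=(\det A)\,I$ forces the off-diagonal contributions in $\sum_i\omega_i\theta_i$ to cancel while the diagonal contribution yields $Q_{\det}\,Q\,d\theta_E=Q^e\,d\theta_E$, the last equality using $Q_{\det}Q=Q^e$ since all $e_H$ agree by \cref{sameorbit}. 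Left-multiplying by $\omega_I^{\curlywedgeup}$ for $|I|\geq 1$, and invoking $\omega_I^{\curlywedgeup}\wedge\omega_s=0$ for $s\in I$ together with $\omega_I^{\curlywedgeup}\wedge\omega_s\doteq Q^e\,\omega_{I\cup\{s\}}^{\curlywedgeup}$ for $s\notin I$ (immediate from \cref{twistedwedgeproduct} and \cref{curlywedgeomega}), I obtain the key expansion
$$
\omega_I^{\curlywedgeup}\,d\theta_E\ \doteq\ \sum_{s\notin I}\epsilon_{I,s}\,\omega_{I\cup\{s\}}^{\curlywedgeup}\,\theta_s,\qquad \epsilon_{I,s}=\pm 1.
$$

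For the first set, at ranks $k\neq 1$ the collection coincides with the standard basis and is already $\F(S)$-independent. At rank $k=1$, the duality identity with $I=\varnothing$ shows $d\theta_E$ has coefficient $\doteq Q^{-e}\neq 0$ on the removed element $\omega_r\theta_r$, so the change-of-basis matrix from the new set to the standard basis is triangular with nonzero diagonal.

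For the second set, I partition the rank-$k$ part into
(A) $\{\omega_{I'}^{\curlywedgeup}\theta_j:|I'|=k,\,r\in I',\,j\neq r\}$,
(B) $\{\omega_{I'}^{\curlywedgeup}\theta_j:|I'|=k,\,r\notin I'\}$, and
(C) $\{\omega_I^{\curlywedgeup}\,d\theta_E:|I|=k-1,\,r\notin I\}$,
and order the standard basis as (A), (B), then the removed block $\{\omega_{I'}^{\curlywedgeup}\theta_r:|I'|=k,\,r\in I'\}$. Elements of (A) and (B) are already standard basis elements, while the key expansion writes each element of (C) as
$$
\omega_I^{\curlywedgeup}\,d\theta_E\ \doteq\ \epsilon_{I,r}\,\omega_{I\cup\{r\}}^{\curlywedgeup}\,\theta_r\ +\ \sum_{s\notin I\cup\{r\}}\epsilon_{I,s}\,\omega_{I\cup\{s\}}^{\curlywedgeup}\,\theta_s,
$$
whose first summand hits the removed block via the bijection $I\leftrightarrow I\cup\{r\}$, and whose remaining summands all lie in (B) because $s\neq r$ together with $r\notin I$ forces $r\notin I\cup\{s\}$. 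The change-of-basis matrix thus has block form
$$
M\ =\ \begin{pmatrix} I&0&0\\ 0&I&0\\ 0&\ast&D\end{pmatrix},
$$
with $D$ diagonal with $\pm 1$ entries, hence $\det M\doteq 1\neq 0$.

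The main obstacle I anticipate is the combinatorial bookkeeping for the second set: verifying that no (C)-expansion contaminates a column in (A) or a removed column other than the one matched by $I\mapsto I\cup\{r\}$. Both facts should follow immediately from the hypothesis $r\notin I$ combined with the vanishing $\omega_I^{\curlywedgeup}\wedge\omega_s=0$ for $s\in I$, which forces every other term of the expansion to land in (B).
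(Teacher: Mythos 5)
Your proposal is correct and follows essentially the same route as the paper's proof: both hinge on the cofactor identity $\sum_m \omega_m\theta_m \doteq Q^e\, d\theta_E$ and the resulting expansion of $\omega_I^{\curlywedgeup}\, d\theta_E$ over $\{\omega_{I\cup\{s\}}^{\curlywedgeup}\theta_s : s\notin I\}$, with independence following because the sets $I\cup\{r\}$ are distinct for distinct $I$ with $r\notin I$. Your explicit block-triangular change-of-basis matrix is just a more formalized packaging of the paper's replacement argument.
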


\begin{proof}
Fix $r$ and note that the given forms indeed all lie in 
$(S\ot \wedge V^*\ot V)^G$ (see \cref{twistedwedgeproduct} and \cref{dualderivations}). 
For each $k$, 
denote the collection of elements of rank $k$ in the first set in the claim
by $\B_k$ and in the second set  by
$\B_k'$.
When $k=0$,  $\B_k=\B_k'=\{\theta_1,\dots,\theta_n\}$,
which is independent over $\F(S)$ as it is a basis of $(S\ot V)^G$.
Note that \cref{wholesetindependent} implies that 
$\mathcal{C}_k=\{\omega^{\curlywedgeup}_I\,\theta_j:
I\in\tbinom{[n]}{k},\, 
1\leq j\leq n\}$ is independent over $\F(S)$ for all $k$. 
As $\B_k=\mathcal{C}_k$
for $k\geq 2$
and $\B_1=\B_1'$,
it is left to show
that, for $k\geq 1$,
$$
\B_k'=\big\{\omega^{\curlywedgeup}_I\,\theta_j:I\in\tbinom{[n]}{k}
\text{ with }
r\notin I
\text{ or }
j\neq r\big\}
\cup
\big\{\omega^{\curlywedgeup}_I\, d\theta_E : I\in\tbinom{[n]}{k-1}
\text{ with }
r\notin I
\big\}
$$
is independent over 
$\F(S)$.
Recall from the proof of \cref{dualderivations} that
$$
\coef(\omega_1,\dots,\omega_n)
= Q_{\det} \, C
\doteq
Q \, Q_{\det}\, A^{-t}
\quad 
\text{ for }
\coef(\theta_1,\dots,\theta_n)=A, 
$$
after replacing $\omega_i$ by $(-1)^{i+1}\omega_i$, where 
$C
=\{c_{ij}\}
=(\det A)\, A^{-t}$ 
is the cofactor matrix of
$A=\{a_{ij}\}$,
as $\det A\doteq Q$ by \cref{saitoderivations}.
Then
$\omega_m
=Q_{\det}\sum_i c_{mi}\otimes x_i$
and $\theta_m=\sum_{j}a_{mj}\otimes v_j$
for $1\leq m\leq n$
and,
as $C^t\, A=(\det A)\, I$,
$$
\sum_{m=1}^n \omega_m\, \theta_m
=Q_{\det}
\sum_{i,j} 
\Big(\sum_{m=1}^n
c_{mi}
a_{mj}
\Big)
\otimes x_i\otimes v_j
\doteq Q_{\det}\sum_i Q\otimes x_i\otimes v_i
=Q^e\, d\theta_E
\, .
$$
Hence, for any $I\subset\{1,\dots,n\}$,
$$
\begin{aligned}
\omega^{\curlywedgeup}_{I}
\, d\theta_E
& \doteq
\frac{1}{Q^e\rule{0ex}{1.5ex}
}%strut
\sum_{m=1}^n (\omega^{\curlywedgeup}_I\wedge \omega_m)\, \theta_m
=
\sum_{m\notin I} 
\Big(\frac{\omega^{\curlywedgeup}_I\wedge \omega_m}{Q^e}\Big)\, \theta_m 
\, .
\end{aligned}
$$
Thus each
$\omega_I^{\curlywedgeup}\, d\theta_E$ lies in the $\F(S)$-span of $\mathcal{C}_k$ with nonzero coefficient of $\omega_{I\cup\{r\}}^{\curlywedgeup}\theta_r$
when $r\notin I$.
As the various sets $I\cup\{r\}$ with $r\notin I$ are distinct, $\B_k'$ is $\F(S)$-independent for $k\geq 1$.
\end{proof}

Now that we have  $\F(S)$-independent
sets of the appropriate size, we show that they each yield a basis of $(S\ot \wedge V^*\ot V)^G$.
We obtain
\cref{AllButOneInto}
of the introduction,
again 
using the dual $1$-forms of  \cref{dualderivations}:
%In the following theorem,
%basic derivations $\theta_1, \ldots,
%\theta_n$ may be written in any order.

\begin{theorem}
\label{AllButOne}
Let $G\subset \GL(V)$ be a reflection group with transvection root spaces all maximal 
and $\characteristic \FF\neq 2$.
Suppose $(S\ot V)^G$ is a free $S^G$-module with basic derivations $\theta_1,\ldots, \theta_n$ and dual 1-forms $\om_1,\ldots, \om_n$. 
Then $(S\ot \wedge V^*\ot V)^G$ is a free $S^G$-module with basis
$$
\big\{d\theta_E\big\}
\cup
\big\{\omega_I^{\curlywedgeup}\, \theta_1,\dots,\omega_I^{\curlywedgeup}\, \theta_n:
I\subset[n]\big\}
\setminus 
\big\{\om_r \theta_r \big\}
\quad
\text{for any  $r=1,\dots, n$}.
$$
\end{theorem}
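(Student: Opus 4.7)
The plan is to prove $\B$ is a basis rank-by-rank, applying \cref{saitotheorem} to each summand of $(S\otimes\wedge V^*\otimes V)^G=\bigoplus_k (S\otimes\wedge^k V^*\otimes V)^G$. The rank-$0$ part of $\B$ (from $I=\varnothing$) is $\{\theta_1,\dots,\theta_n\}$; the rank-$1$ part is $\{d\theta_E\}\cup\{\omega_i\theta_j:1\le i,j\le n\}\setminus\{\omega_r\theta_r\}$; and for $k\ge 2$ the rank-$k$ part is $\{\omega_I^{\curlywedgeup}\theta_j:|I|=k,\,1\le j\le n\}$. Each piece has cardinality $n\binom{n}{k}$, and independence over $\F(S)$ is furnished directly by \cref{basisindependent}, so only the degree equality in \cref{saitotheorem}(c) remains.

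Because $\characteristic\FF\ne 2$, no transvection equals its inverse, so $\delta_H=0$ for all $H\in\A$; because the transvection root spaces are maximal, \cref{actstransitively} places all hyperplanes in a single orbit, so $e_H$ and $a_{H,k}$ are constant on $\A$. Writing $e$ and $a_k$ for the common values and $N=|\A|$, \cref{adef} simplifies (since $b_H=n-1$) to $a_0=0$, $a_1=n(n-1)$, and $a_k=n\binom{n-1}{k}+\binom{n-1}{k-1}$ for $k\ge 2$. The input on our side is the duality \cref{DualityOfExponents}: after an appropriate ordering of basic derivations and their duals, $m_i+m_i^*=eN$, where $m_i=\deg\omega_i$ and $m_i^*=\deg\theta_i$, and from \cref{saitoderivations} one gets $\sum_i m_i^*=N$ and $\sum_i m_i=(en-1)N$.

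Checking each rank reduces to a short calculation. For $k=0$ the sum of degrees is $\sum_i m_i^*=N$, matching. For $k=1$, since $\deg d\theta_E=0$ and $\deg\omega_r\theta_r=eN$, the computed sum collapses to $n^2eN-eN=eN(n^2-1)$, matching the Saito target $N(n-1)e(n+1)$. For $k\ge 2$, expanding $\deg\omega_I^{\curlywedgeup}=\sum_{i\in I}m_i-e(k-1)N$ and using the identities $\sum_{|I|=k}\sum_{i\in I}m_i=\binom{n-1}{k-1}\sum_i m_i$ and $k\binom{n}{k}=n\binom{n-1}{k-1}$ matches the computed sum with the target. The main obstacle is really just bookkeeping---correctly isolating the rank-$1$ adjustment produced by replacing $\omega_r\theta_r$ with $d\theta_E$, and tracking the simplification of $a_{H,k}$ in the maximal transvection-root-space case---with no deeper conceptual difficulty, since \cref{basisindependent} handles independence and \cref{DualityOfExponents} encodes all the necessary degree information.
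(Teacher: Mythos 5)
Your proposal is correct and follows essentially the same route as the paper's proof: split the candidate basis by rank, invoke \cref{basisindependent} for $\F(S)$-independence, and verify the degree-sum condition of \cref{saitotheorem}(c) rank by rank using $\sum_i\deg\theta_i=|\A|$, $\sum_i\deg\omega_i=(en-1)|\A|$, and $\deg\omega_r\theta_r=e|\A|$ from \cref{DualityOfExponents}. The degree computations for $k=0$, $k=1$, and $k\ge 2$ match the paper's $\Delta_k$ exactly.
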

\begin{proof}
Fix $r$ and assume without loss of generality that the $\theta_i$ are homogeneous.
For each $k$, let $\B_k$ 
be the collection of 
elements in the proposed
basis of rank $k$.
Then $\B_k$ is an $\F(S)$-independent 
subset of
$(S\ot\wedge^k V^*\ot V)^G$ by \cref{basisindependent}.
It suffices to show by \cref{saitotheorem} that $\sum_{\eta\in\B_k}\deg \eta=\Delta_k$ for
$$
\begin{aligned}
\Delta_k
%=&\sum_{H\in\A}\tbinom{n-1}{k}+(e_H-1)(n-1)\tbinom{n-1}{k-1}+e_Ha_{H,k}
=\begin{cases}
\deg Q
&\text{when } k=0\, ,
\\
(en^2-e)\deg Q
&\text{when }k=1\, ,
\\
%\Big(\tbinom{n-1}{k}(1+en)+\tbinom{n-1}{k-1}(en-e-n+2)\Big)\deg(Q)
\tbinom{n}{k}(en-k+1)\deg Q
&\text{when }k\geq 2
\, .
\end{cases}
\end{aligned}
$$
When $k=0$,  $\B_k=\{\theta_1,\dots,\theta_n\}$, so indeed
$\sum_{\eta\in \B_k}\deg \eta 
= \deg Q 
= \Delta_0$ by \cref{saitoderivations}.

When $k=1$,
$\B_k=\{d\theta_E\}\cup\{\omega_i\theta_j:(i,j)\neq(r,r)\}$, thus
$$
\sum_{\eta\in \B_1}\deg \eta
=\!\!\!
\sum_{1\leq i,j\leq n}\hspace{-0.1in} \deg \omega_i\theta_j - \deg \omega_r\theta_r + \deg d\theta_E
=
n\sum_i\deg\omega_i+n\sum_j \deg \theta_j- \deg\omega_r\theta_r+\deg d\theta_E
,
$$
which is 
%$\rule[-1.5ex]{1ex}{0ex}$ %strut
$(en^2-e)\deg Q=\Delta_1$
since
$\sum_i\deg \omega_i
=(en-1)\deg Q$ and
$\sum_j\deg\theta_j=\deg Q$
by \cref{saitoderivations} (see \cref{dualderivations}),
$\deg d\theta_E=0$,
and (see the proof of \cref{DualityOfExponents})
$$
\deg\omega_r\theta_r=
\deg\omega_r + \deg\theta_r
%=\deg Q_{\det }
%+\sum_{j\neq n} \deg \theta_j + \deg\theta_n
=e|\A|
=e\deg Q
\, .
$$

When $k\geq 2$, 
$$
\sum_{\eta\in \B_k}\deg \eta
=
\sum_{I\in\tbinom{[n]}{k},\\ 1\leq j\leq n} 
\deg \omega^{\curlywedgeup}_I\, \theta_j\, 
=
n\sum_{I\in\tbinom{[n]}{k}}\deg\omega^{\curlywedgeup}_I + \tbinom{n}{k} \sum_j\deg\theta_j
\, .
$$
As $\deg \omega^{\curlywedgeup}_I
=\sum_{i\in I}\deg \omega_i-e(k-1)\deg Q$
(see \cref{twistedwedgeproduct}) and $n\tbinom{n-1}{k-1}=k\tbinom{n}{k}$, this is
$$
n \tbinom{n-1}{k-1} \sum_i \deg\omega_i - n \tbinom{n}{k} e (k-1)\deg Q
+ \tbinom{n}{k} \sum_j\deg \theta_j
=
\tbinom{n}{k}(en-k+1)\deg Q
=\Delta_k
\, .
\vspace{-5ex}
$$
\end{proof}
	
\begin{remark}{\em 
\cref{AllButOne}
implies that
$\{\omega_I^{\curlywedgeup}\,\theta_j:
I\in\tbinom{[n]}{k},
1\leq j\leq n\}$ is an $S^G$-basis of $(S\otimes \wedge^k V^*\otimes V)^G$
when $k\geq 2$
for a reflection group $G\subset \GL(V)$ with transvection root spaces all maximal and $\characteristic \FF\neq 2$,
provided $(S\ot V)^G$
is free with 
basic derivations $\theta_1,\ldots, \theta_n$
and dual $1$-forms
$\om_1, \ldots, \om_n$.
}\end{remark}
%%%%%%%%%%%%%%

We see in \cref{char2}
that the following corollary
of \cref{AllButOne}
also holds when
$\characteristic \FF =2$.

\begin{cor}
\label{freeimpliesfree}
Suppose $\characteristic \FF\neq 2$.
Let $G\subset \GL(V)$ be a reflection group with transvection root spaces all maximal.
If $(S\ot V)^G$  is a free $S^G$-module, then so is
$(S\ot \wedge V^*\ot V)^G$.
\end{cor}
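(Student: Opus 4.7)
The plan is to observe that this corollary is essentially an immediate consequence of \cref{AllButOne}, with only one small input needed to verify the hypotheses. Indeed, \cref{AllButOne} asserts that whenever $(S\ot V)^G$ is free over $S^G$ with basic derivations $\theta_1,\ldots,\theta_n$ \emph{and} we have a corresponding system of dual $1$-forms $\om_1,\ldots,\om_n$, one can write down an explicit $S^G$-basis of $(S\ot\wedge V^*\ot V)^G$; in particular the latter is free.

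So the proof would proceed as follows. First, using the assumption that $(S\ot V)^G$ is a free $S^G$-module, choose any homogeneous $S^G$-basis $\theta_1,\ldots,\theta_n$ of basic derivations (graded Nakayama guarantees this is possible). Next, invoke \cref{dualderivations}: since the transvection root spaces of $G$ are all maximal, the dualizing construction $\om_i=\theta_i^*=(1\ot\Phi)(Q_{\det}\,\theta_1\wedge\cdots\wedge\widehat{\theta}_i\wedge\cdots\wedge\theta_n)$ produces $1$-forms $\om_1,\ldots,\om_n$ that form an $S^G$-basis of $(S\ot V^*)^G$. Both hypotheses of \cref{AllButOne} are now in place (and we are assuming $\characteristic\FF\neq 2$), so for any choice of $r$, say $r=1$, the set
\[
\big\{d\theta_E\big\}\cup\big\{\om_I^{\curlywedgeup}\,\theta_j:I\subset[n],\,1\leq j\leq n\big\}\setminus\big\{\om_1\theta_1\big\}
\]
is an $S^G$-basis of $(S\ot\wedge V^*\ot V)^G$. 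In particular $(S\ot\wedge V^*\ot V)^G$ is free over $S^G$, establishing the corollary.

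There is no real obstacle here; the only conceptual point is checking that the hypothesis of \cref{AllButOne} asking for both basic derivations \emph{and} dual $1$-forms is automatic from the freeness of $(S\ot V)^G$ alone, which is exactly what \cref{dualderivations} supplies under the maximal transvection root space hypothesis. The remainder is simply a citation of \cref{AllButOne}.
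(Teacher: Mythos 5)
Your proof is correct and matches the paper's approach: the paper states this as an immediate corollary of \cref{AllButOne}, with the dual $1$-forms supplied by \cref{dualderivations} exactly as you describe. No further comment is needed.
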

We give an alternate
basis
from which we derive
a module structure
in \cref{structuretheorem2},
again 
using the dual $1$-forms of  \cref{dualderivations}.
\begin{theorem}
\label{ExplicitBasis}
Let $G\subset \GL(V)$ be a reflection group with transvection root spaces all maximal and $\characteristic \FF\neq 2$. 
Suppose $(S\ot V)^G$ is a free $S^G$-module
with
basic derivations $\theta_1,\ldots, \theta_n$
and
dual 1-forms $\om_1,\ldots, \om_n$.
Then $(S\otimes \wedge V^*\otimes V)^G$ is a free $S^G$-module with basis
\vspace{1ex}
\begin{small}
$$
\big\{
\omega^{\curlywedgeup}_I\, \theta_1,
\ldots,
{\omega^{\curlywedgeup}_I\, \theta_{r-1}},
{\omega^{\curlywedgeup}_I\, \theta_{r+1}},
%\widehat{\omega^{\curlywedgeup}_I\, \theta_r},
\ldots,
\omega^{\curlywedgeup}_I\, \theta_{n}:
I\subset[n],\,
r\in I \}
\cup
\big\{\omega^{\curlywedgeup}_I\, d\theta_E,
\omega^{\curlywedgeup}_I\,\theta_1,
\dots,
\omega^{\curlywedgeup}_I\,\theta_n:
I\subset[n],
r\notin I\big\}
\vspace{1ex}
$$
\end{small}
for any $r=1,\dots, n$.
\end{theorem}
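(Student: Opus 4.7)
The plan is to invoke the Saito criterion (Theorem~\ref{saitotheorem}) in part (c) form: verify that the proposed set is $\F(S)$-independent and that its rank-$k$ elements have degrees summing to the required quantity $\Delta_k$. Fix $r \in \{1,\dots,n\}$ and, for each $k$, let $\B_k$ denote the rank-$k$ part of the proposed basis. First, I would observe that $\B_k$ is exactly the rank-$k$ part of the second set in Lemma~\ref{basisindependent}, so $\F(S)$-independence is already in hand. Second, a routine count
$$
(n-1)\tbinom{n-1}{k-1}+n\tbinom{n-1}{k}+\tbinom{n-1}{k-1}=n\tbinom{n}{k}
$$
confirms that $|\B_k|=n\tbinom{n}{k}$, the cardinality required by Theorem~\ref{saitotheorem}.

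For the degree sum, the plan is to piggyback on the computation in the proof of Theorem~\ref{AllButOne}. Write
$$
\sum_{\eta\in\B_k}\deg\eta \;=\; \sum_{I\in\binom{[n]}{k},\, 1\le j\le n}\!\!\deg(\omega_I^{\curlywedgeup}\theta_j)\;-\!\!\sum_{\substack{I\in\binom{[n]}{k}\\ r\in I}}\!\!\deg(\omega_I^{\curlywedgeup}\theta_r)\;+\!\!\sum_{\substack{J\in\binom{[n]}{k-1}\\ r\notin J}}\!\!\deg(\omega_J^{\curlywedgeup}\, d\theta_E).
$$
The first sum already equals the degree sum $\Delta_k$ computed in the proof of Theorem~\ref{AllButOne} (which dealt with rank $k\ge 2$; the $k=0,1$ cases require separate but essentially identical bookkeeping). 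The two correction sums are indexed by the same set $\{J\subset[n]\setminus\{r\}: |J|=k-1\}$ via $I=J\cup\{r\}$, and using $\deg\omega_I^{\curlywedgeup}=\sum_{i\in I}\deg\omega_i-e(|I|-1)\deg Q$ and $\deg d\theta_E=0$, their difference telescopes to
$$
\tbinom{n-1}{k-1}\bigl(e\deg Q-\deg\omega_r-\deg\theta_r\bigr),
$$
which vanishes by the duality $\deg\omega_r+\deg\theta_r=e|\A|=e\deg Q$ established in Corollary~\ref{DualityOfExponents}. Hence the corrections cancel and the Saito degree sum is preserved. The cases $k=0$ and $k=1$ need to be treated separately since the first (respectively second) correction sum is empty; the same cancellation argument works, using only that $\deg d\theta_E=0$ and the same duality identity.

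With $\F(S)$-independence, correct cardinality, and the matching degree sum, Theorem~\ref{saitotheorem}(c) concludes that $\B=\bigcup_k \B_k$ is a free $S^G$-basis of $(S\otimes\wedge V^*\otimes V)^G$. The main obstacle is really the bookkeeping in the degree computation, particularly making sure the handling of $k=0,1$ lines up with the boundary conventions $\tbinom{n}{-1}=0$ and that the corrections vanish individually rather than just in aggregate; the duality identity $\deg\omega_i+\deg\theta_i=e|\A|$ does all the heavy lifting and is precisely what makes the substitution of $\omega_I^{\curlywedgeup}\theta_r$ by $\omega_{I\setminus\{r\}}^{\curlywedgeup}\, d\theta_E$ degree-preserving.
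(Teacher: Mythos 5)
Your proposal is correct and follows essentially the same route as the paper: both invoke \cref{basisindependent} for $\F(S)$-independence and then verify the degree condition of \cref{saitotheorem}(c), with the key point in each case being that replacing $\omega_{I\cup\{r\}}^{\curlywedgeup}\theta_r$ by $\omega_I^{\curlywedgeup}\,d\theta_E$ preserves degree because $\deg\omega_r+\deg\theta_r=e\deg Q$. The paper organizes the bookkeeping by comparing term-by-term with the basis of \cref{AllButOne} (noting $\B_k=\B_k'$ for $k=0,1$) rather than subtracting correction sums from the full product set, but this is only a cosmetic difference.
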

\begin{proof}
Fix $r$ and assume without loss of generality that the $\theta_i$ are homogeneous.
For each $k$, let $\B_k'$ 
be the collection of 
elements in the proposed
basis of rank $k$.
Then $\B_k'$ is an $\F(S)$-independent 
subset of
$(S\ot\wedge^k V^*\ot V)^G$ by \cref{basisindependent}.
We argue that
$\sum_{\eta\in\B_k'}\deg\eta$
coincides with
$\sum_{\eta\in\B_k}\deg\eta$,
where $\B_k$ is the
collection of elements 
in the basis afforded
by \cref{AllButOne}
of rank $k$ for this fixed $r$.
It will follow then from
\cref{saitotheorem}
that $\B_k'$ is also a basis.

Notice $\B_k=\B_k'$ for $k=0,1$,
so assume $k\geq 2$.
Recall that $\deg\om_r\theta_r=e\deg Q$
(see the proof of \cref{AllButOne}).
Then for nonempty $ I\subset\{1,\dots,n\}$ with $r\notin I$,
$$
\deg \omega_{I\cup\{r\}}\theta_r
=
\deg \om_I^{\curlywedgeup}
\curlywedge\om_r\, \theta_r
= 
\deg\om_I^{\curlywedgeup} + \deg \om_r\theta_r
-e\deg Q
=
\deg \om_I^{\curlywedgeup}
= \deg \om_{I}^{\curlywedgeup}\,  d\theta_E
\, .
$$ 
Thus, as $k\geq 2$, the elements in $\B_k$
not in $\B_k'$ have the same polynomials degrees
as the elements in $\B_k'$ not in $\B_k$, 
and thus
$
\sum_{\eta\in \B'_k}
\deg \eta
=\sum_{\eta\in \B_k} \deg \eta
$.
\end{proof}

\begin{remark}
{\em
Recall again that when the transvection
root spaces of a reflection group $G$ are all maximal,
$(S\ot V)^G$ is free over $S^G$ if and only if
$(S\ot V^*)^G$ is free over $S^G$ 
(see \cref{DualityOfExponents}).
For \cref{AllButOne}
and \cref{ExplicitBasis},
rather than assuming 
that $(S\ot V)^G$ is free
and using the dual $1$-forms,
we may instead assume $(S\ot V^*)^G$ is free 
and use the dual derivations 
of \cref{alternateduality} (see \cref{dim1remark} for $n=1$) 
to obtain the same $S^G$-bases of $(S\ot \wedge V^*\ot V)^G$. 
Indeed, the proofs simply rely on the fact that
$\om_i$ and $\theta_i$ are dual.
}
\end{remark}

%%%%%%%%%%%%%%%%%%%%%%%%%%%%%%%%%%%%%%
\subsection*{Module structure
over twisted subalgebra}
For a well-generated complex reflection group $G$,
$(S\ot \wedge V^*\ot V)^G$ is a direct sum of submodules of rank $1$ over 
$$
\bigwedge_{S^G}\{\om_1,\ldots, \om_{n-1}
\}
$$ 
for homogeneous
basic $1$-forms
$\om_1, \ldots, \om_n$ 
with $\om_n$ of maximal polynomial degree
(see \cite[Theorem~1.1]{ReinerShepler}).
One asks if a similar result holds over arbitrary fields $\FF$
for reflection groups whose transvection root spaces are maximal. 
\cref{ExplicitBasis} implies a more subtle decomposition 
with three key differences from the characteristic zero setting:
here we may omit any one of the basic $1$-forms 
in constructing a suitable subalgebra of invariant differential forms,
we use 
the twisted wedge product of \cref{curlywedgeomega}
instead of the regular wedge product,
and we require an ideal of invariant differential forms.
We define for $r=1, \ldots, n$
$$
A_r:=\bigcurlywedge_{S^G}
\{\omega_1,\dots,
\widehat{\om_r}, \ldots, \omega_{n}\}
=S^G\text{-span}\{\omega_I^{\curlywedgeup}:I\subset[n],r\notin I\}
\quad
\subset
(S\ot \wedge V^*)^G
$$
and use the ideal generated by $\om_r$ under $\curlywedge$:
$$
A_r\curlywedge \om_r
:=
S^G\text{-span}\{ \om_I^{\curlywedgeup} : I\subset[n]
\text{ with }
r\in I\}
\, .
$$

The following corollary of
\cref{ExplicitBasis} provides a module structure over $A_r$.

\begin{cor}
\label{structuretheorem2}
Let $G\subset\GL(V)$ be a reflection group 
with transvection root spaces all maximal and $\characteristic \FF\neq 2$. 
Suppose $(S\ot V)^G$ is a free $S^G$-module
with basic derivations $\theta_1,\ldots, \theta_n$ and dual 1-forms $\om_1,\ldots, \om_n$.
Then, for any $r=1,\dots,n$,
$(S\otimes \wedge V^*\otimes V)^G$
is a direct sum of $A_r$-submodules:
$$
(S\otimes \wedge V^*\otimes V)^G
=
\bigoplus_{j=1}^{n}
A_r\, \theta_j 
\oplus
\bigoplus_{j\neq r} \, (A_r\curlywedge\om_r)\, \theta_j
\oplus
A_r\, d\theta_E
\, .
$$ 
\end{cor}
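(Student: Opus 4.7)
The plan is to recognize that this corollary is essentially a reorganization of the $S^G$-basis established in \cref{ExplicitBasis} into an $A_r$-module decomposition. First I would observe that by \cref{twistedalgebra}, the set $\{\omega_I^{\curlywedgeup}: I\subset[n]\}$ forms an $S^G$-basis of $(S\ot\wedge V^*)^G$, so in particular the subset $\{\omega_I^{\curlywedgeup}: r\notin I\}$ is an $S^G$-basis for $A_r$, making $A_r$ itself a free $S^G$-module. Likewise $A_r\curlywedge\omega_r$ has $S^G$-basis $\{\omega_I^{\curlywedgeup}: r\in I\}$, using the identity $\omega_I^{\curlywedgeup}\curlywedge\omega_r=\omega_{I\cup\{r\}}^{\curlywedgeup}$ valid for $r\notin I$ directly from \cref{curlywedgeomega}.

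Next I would match each summand of the proposed decomposition with a portion of the basis from \cref{ExplicitBasis}. Concretely, $A_r\,\theta_j$ is the $S^G$-span of $\{\omega_I^{\curlywedgeup}\theta_j: r\notin I\}$; the submodule $(A_r\curlywedge\omega_r)\theta_j$ for $j\neq r$ is the $S^G$-span of $\{\omega_J^{\curlywedgeup}\theta_j: r\in J\}$; and $A_r\,d\theta_E$ is the $S^G$-span of $\{\omega_I^{\curlywedgeup}\,d\theta_E: r\notin I\}$. Taking the disjoint union of these three families of generators recovers exactly the basis exhibited in \cref{ExplicitBasis}.

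It then follows that the sum of the three listed families of $A_r$-submodules equals $(S\ot\wedge V^*\ot V)^G$ and that the sum is direct, because the $S^G$-spans of the three disjoint generator sets intersect only in zero (each element is uniquely expressed in the basis of \cref{ExplicitBasis}). To verify freeness of each summand as an $A_r$-module, I would note that multiplication by the fixed differential derivation $\theta_j$ (respectively $\omega_r\theta_j$ for $j\neq r$, or $d\theta_E$) gives an $S^G$-linear bijection from the $S^G$-basis of $A_r$ onto a subset of the $S^G$-basis of $(S\ot\wedge V^*\ot V)^G$, so each summand is free of rank one over $A_r$.

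The main bookkeeping obstacle is simply confirming the index translation in the middle family, namely that $(A_r\curlywedge\omega_r)\theta_j$ produces precisely the portion of the basis indexed by $I\ni r$; but this reduces to the identity $\omega_I^{\curlywedgeup}\curlywedge\omega_r=\omega_{I\cup\{r\}}^{\curlywedgeup}$ noted above. The substantive content, namely that the union of these generators is indeed a free $S^G$-basis, has already been handled by \cref{ExplicitBasis}, so this corollary follows with no further computation.
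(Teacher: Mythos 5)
Your proposal is correct and matches the paper's intent exactly: the corollary is stated as an immediate consequence of \cref{ExplicitBasis}, obtained by regrouping that basis into the three families $\{\omega_I^{\curlywedgeup}\theta_j: r\notin I\}$, $\{\omega_I^{\curlywedgeup}\theta_j: r\in I,\ j\neq r\}$, and $\{\omega_I^{\curlywedgeup}\,d\theta_E: r\notin I\}$, and the paper supplies no further argument. (The only cosmetic point is that $\omega_I^{\curlywedgeup}\curlywedge\omega_r=\omega_{I\cup\{r\}}^{\curlywedgeup}$ holds only up to sign under the increasing-index convention of \cref{curlywedgeomega}, which is immaterial for the spans involved.)
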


%%%%%%%%%%%%%%%%%%%%%%%%%%%%%%%%%%
%%%%%%%%%%%%%%%%%%%%%%%%%%%%%%%%%%
%%%%%%%%%%%%%%%%%%%%%%%%%%%%%%%%%%
\subsection*{Hilbert series}
We consider the Hilbert series
of the bigraded $\FF$-vector space
of invariant differential derivations:
$$
{\rm Hilb}
\big( (S\ot \wedge V^* \ot V )^G,
\mathsf{ q}, \mathsf {t}\big)\
:=
\sum_{i\geq 0,\ k\geq 0}
\dim_{\FF}
(S_i\ot \wedge^k V^*\ot V)^G 
\
{\mathsf q}^i\, {\mathsf t}^k\, .
$$
For a Coxeter group $G$,
this Hilbert series gives
the first Kirkman number
(see \cite{Rhoades2014, ParkingSpaces, BerestEtingofGinzburg2003, ReinerSommersShepler}).

\begin{cor}
\label{HilbertSeries}
Let $G\subset \GL(V)$ be a reflection group 
with transvection root spaces all maximal
and $\characteristic \FF\neq 2$.
If $(S\ot V)^G$ is a free $S^G$-module
with homogeneous
generators of polynomial degrees $m_1^*, \ldots, m_n^*$, 
then
$$
\begin{aligned}
{\rm Hilb}
&\big( (S\ot \wedge V^* \ot V )^G,
\mathsf{q}, \mathsf{ t}\big)\\
 & =
{\rm Hilb}\big(S^G, {\mathsf q}\big)
\Big(
{\mathsf t}-{\mathsf q}^{e|\A|}{\mathsf t}+
\big(\sum_{i=1}^n {\mathsf q}^{m_i^*}\big)\big(
1-{\mathsf q}^{e|\A|}
+{\mathsf q}^{e|\A|}\prod_{i=1}^{n}
(1+{\mathsf q}^{-m_i^*}{\mathsf t})
\big)
\Big)
\, ,
%\prod_{\substack{i=1,\ldots, n\\j=1,\ldots, n}}
\end{aligned}
$$
where $e$ is the
maximal order of a diagonalizable reflection in $G$ and
$|\A|$ is the number of reflecting
hyperplanes of $G$.
\end{cor}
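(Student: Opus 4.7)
The plan is to read the Hilbert series directly off the direct sum decomposition given in \cref{structuretheorem2}, using the twisted wedge basis of $A_r$ and of the ideal $A_r\curlywedge\om_r$ as free $S^G$-modules. Fix any $r\in\{1,\dots,n\}$ and choose homogeneous basic derivations $\theta_1,\dots,\theta_n$ with $\deg\theta_j=m_j^*$; by \cref{DualityOfExponents}, the dual $1$-forms $\om_1,\dots,\om_n$ afforded by \cref{dualderivations} satisfy $\deg\om_i=e|\A|-m_i^*$. The element $d\theta_E$ is homogeneous of polynomial degree $0$ and rank $1$, so it contributes a factor of $\mathsf{t}$, while $\theta_j$ has rank $0$ and contributes $\mathsf{q}^{m_j^*}$.

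First I would compute $\mathrm{Hilb}(A_r,\mathsf{q},\mathsf{t})$. Since $A_r=\bigcurlywedge_{S^G}\{\om_i:i\neq r\}$ is a free $S^G$-module with basis $\{\om_I^{\curlywedgeup}:I\subset[n],\,r\notin I\}$, and since for $|I|=k\ge 1$ one has $\deg\om_I^{\curlywedgeup}=\sum_{i\in I}(e|\A|-m_i^*)-e|\A|(k-1)=e|\A|-\sum_{i\in I}m_i^*$ while for $I=\varnothing$ the degree is $0$, separating off $I=\varnothing$ and summing over nonempty $I\subset[n]\setminus\{r\}$ gives
\[
\mathrm{Hilb}(A_r,\mathsf{q},\mathsf{t})=\mathrm{Hilb}(S^G,\mathsf{q})\Bigl(1-\mathsf{q}^{e|\A|}+\mathsf{q}^{e|\A|}\prod_{i\neq r}(1+\mathsf{q}^{-m_i^*}\mathsf{t})\Bigr).
\]
For the ideal $A_r\curlywedge\om_r$, the basis $\{\om_I^{\curlywedgeup}:r\in I\}$ always has $|I|\ge 1$, so the same degree formula applies and factoring out $\mathsf{q}^{e|\A|-m_r^*}\mathsf{t}$ (the contribution of $i=r$) yields
\[
\mathrm{Hilb}(A_r\curlywedge\om_r,\mathsf{q},\mathsf{t})=\mathrm{Hilb}(S^G,\mathsf{q})\,\mathsf{q}^{e|\A|-m_r^*}\mathsf{t}\prod_{i\neq r}(1+\mathsf{q}^{-m_i^*}\mathsf{t}).
\]

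Next, I would assemble the total Hilbert series from the decomposition
\[
(S\ot\wedge V^*\ot V)^G=\bigoplus_{j=1}^{n}A_r\,\theta_j\ \oplus\ \bigoplus_{j\neq r}(A_r\curlywedge\om_r)\,\theta_j\ \oplus\ A_r\,d\theta_E
\]
by multiplying the series for $A_r$ and $A_r\curlywedge\om_r$ by the appropriate $\mathsf{q}^{m_j^*}$ or $\mathsf{t}$. Writing $\Sigma=\sum_i\mathsf{q}^{m_i^*}$, $P_r=\prod_{i\neq r}(1+\mathsf{q}^{-m_i^*}\mathsf{t})$, and $P=(1+\mathsf{q}^{-m_r^*}\mathsf{t})P_r$, this produces
\[
\mathrm{Hilb}(S^G,\mathsf{q})\Bigl[(\Sigma+\mathsf{t})\bigl(1-\mathsf{q}^{e|\A|}+\mathsf{q}^{e|\A|}P_r\bigr)+(\Sigma-\mathsf{q}^{m_r^*})\mathsf{q}^{e|\A|-m_r^*}\mathsf{t}\,P_r\Bigr].
\]
The main (though still routine) step is to verify that this $r$-dependent expression collapses to the symmetric formula in the statement. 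I would do this by expanding and checking that the $P_r$-terms combine to $\Sigma\,\mathsf{q}^{e|\A|}P$: the cross term $\mathsf{q}^{e|\A|-m_r^*}\mathsf{t}\,P_r(\Sigma-\mathsf{q}^{m_r^*})$ plus $\Sigma\,\mathsf{q}^{e|\A|}P_r$ rearranges as $\Sigma\,\mathsf{q}^{e|\A|}P_r(1+\mathsf{q}^{-m_r^*}\mathsf{t})-\mathsf{q}^{e|\A|}\mathsf{t}P_r=\Sigma\,\mathsf{q}^{e|\A|}P-\mathsf{q}^{e|\A|}\mathsf{t}P_r$, and the remaining $\mathsf{t}\,\mathsf{q}^{e|\A|}P_r$ contribution cancels exactly.

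The main obstacle is purely bookkeeping: verifying this cancellation and confirming that the $r$-dependence disappears (as it must, since the left-hand side is independent of $r$). As a sanity check one can read the coefficient of $\mathsf{t}^0$ (recovering $\mathrm{Hilb}((S\ot V)^G,\mathsf{q})=\mathrm{Hilb}(S^G,\mathsf{q})\Sigma$) and the coefficient of $\mathsf{t}^n$, both of which are forced by the basic derivation degrees and the twisted-wedge top form, and match the asserted formula.
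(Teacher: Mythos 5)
Your proposal is correct, and the computations check out (I verified the collapse: $(\Sigma+\mathsf{t})\mathsf{q}^{e|\A|}P_r+(\Sigma-\mathsf{q}^{m_r^*})\mathsf{q}^{e|\A|-m_r^*}\mathsf{t}P_r=\Sigma\,\mathsf{q}^{e|\A|}P$, which together with $(\Sigma+\mathsf{t})(1-\mathsf{q}^{e|\A|})$ gives exactly the stated formula). However, you take a genuinely different route from the paper. The paper's proof reads the Hilbert series off the basis of \cref{AllButOne} with $r=n$: it computes $\mathrm{Hilb}$ of the full product set $\{\om_I^{\curlywedgeup}\theta_j\}$ as $\mathrm{Hilb}(A,\mathsf{q},\mathsf{t})\cdot\sum_j\mathsf{q}^{m_j^*}$ with $A=\medcurlywedge_{\FF}\{\om_1,\dots,\om_n\}$, then subtracts the single term $\mathsf{q}^{e|\A|}\mathsf{t}$ for $\om_n\theta_n$ and adds $\mathsf{t}$ for $d\theta_E$ --- so the three summands of the asserted formula appear verbatim and no further simplification is needed. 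You instead start from the $A_r$-module decomposition of \cref{structuretheorem2} (equivalently the basis of \cref{ExplicitBasis}), compute $\mathrm{Hilb}(A_r)$ and $\mathrm{Hilb}(A_r\curlywedge\om_r)$ separately, and then must do the extra bookkeeping to see that the $r$-dependent expression collapses to the symmetric one. Both arguments rest on the same degree data ($\deg\theta_j=m_j^*$, $\deg\om_i=e|\A|-m_i^*$ from \cref{DualityOfExponents} and \cref{dualderivations}, and $\deg\om_I^{\curlywedgeup}=e|\A|-\sum_{i\in I}m_i^*$); the paper's choice of basis buys a proof with no algebraic manipulation, while yours has the minor virtue of exhibiting the formula as a sum of Hilbert series of the structural pieces $A_r\theta_j$, $(A_r\curlywedge\om_r)\theta_j$, and $A_r\,d\theta_E$, at the cost of the cancellation step.
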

\begin{proof}
Say $\theta_1,\ldots,
\theta_n$ is 
a homogeneous $S^G$-basis of $(S\ot V)^G$ with $\deg\theta_i=m_i^*$.
Consider the
dual $1$-forms
$\om_1,\ldots, \om_n$
of polynomial degree $m_i=\deg\om_i=e|\A|-m_i^*$ of \cref{dualderivations} (see \cref{DualityOfExponents})
and set
$A
=\medcurlywedge_{\FF}
\{\omega_1,\dots,\omega_{n}\}
=\FF\text{-span}\{\omega_I^{\curlywedgeup} : I\subset [n]\}.$
Then
\cref{AllButOne} (with $r=n$ for
example)
implies the result:
${\rm Hilb}
\big( (S\ot \wedge V^* \ot V )^G, 
\mathsf{q}, \mathsf{t}\big)$
is 
${\rm Hilb}\big( S^G, 
\mathsf{q})$
times
$$\begin{aligned}
&
{\rm Hilb}\Big(\bigoplus_{
I,\, j} \FF \, \om_I^{\curlywedgeup}\, \theta_j,
\mathsf{q},\mathsf{t}\Big)
-
{\rm Hilb}\Big(\FF\, \om_n\, \theta_n,
\mathsf{q},\mathsf{t}\Big)
+
{\rm Hilb}(\FF\, d\theta_E,
\mathsf{q},\mathsf{t})
\, ,
\end{aligned}
$$
and a computation confirms that ${\rm Hilb}
\big(\bigoplus_{
I,\, j} \FF \, \om_I^{\curlywedgeup}\, \theta_j,
\mathsf{q},\mathsf{t}\big)$
is 
$$\begin{aligned}
{\rm Hilb}(A,\mathsf{q}, \mathsf{t})
\cdot
{\rm Hilb}(\oplus_{j}\ \FF\, \theta_j,
\mathsf{q},\mathsf{t})
=
\Big(1-
\mathsf{q}^{e|\A|}
+\mathsf{q}^{e|\A|}\prod_{i=1}^n (1+\mathsf{q}^{m_i-e|\A|}\, \mathsf{t})\Big)
\Big(\sum_{i=1}^n 
\mathsf{q}^{m_i^*}\Big)
\, .
\end{aligned}
$$
\end{proof}

See \cref{slglsection} for examples 
of reflection groups with transvection root spaces
maximal.
Groups fixing a single hyperplane pointwise
provide other examples, see
\cref{OneHyperplane} (with $b_H=n-1$)
and \cref{ConvenientBasis}.
Note that the hypothesis that the transvection root spaces of $G$ are all maximal in \cref{AllButOne} is critical,
as we see in the following example.

\begin{example}{\em 
We consider a reflection group $G$
over $\FF_p$ for a prime
$p>2$
where $S^G$ is not a polynomial algebra
(see \cite[Section~8.2]{CampbellWehlau}):
$$G=\left\langle
\begin{smallpmatrix}
1&0&0&0\\
0&1&0&0\\
1&0&1&0\\
0&0&0&1
\end{smallpmatrix},
\begin{smallpmatrix}
1&0&0&0\\
0&1&0&0\\
0&0&1&0\\
0&1&0&1
\end{smallpmatrix},
\begin{smallpmatrix}
1&0&0&0\\
0&1&0&0\\
1&1&1&0\\
1&1&0&1
\end{smallpmatrix}
\right\rangle=
\left\{
\begin{smallpmatrix}
1&0&0&0\\
0&1&0&0\\
a&c&1&0\\
c&b&0&1
\end{smallpmatrix}:a,b,c\in\FF_p
\right\}
\, .
$$
Here,
$Q=x_1^px_2-x_1x_2^p$
and $e_H=b_H=1$, and $\delta_H=0$ for each reflecting hyperplane $H$ of $G$.
Both
$(S\otimes V)^G$ and $(S\ot V^*)^G$
are free $S^G$-modules with respective bases
$\{\theta_i\}$ and $\{\omega_i\}$ for
$$
\begin{aligned}
\theta_1&=1\ot v_3,
\qquad
\, \theta_3=x_1\ot v_1+x_2\ot v_2+x_3\ot v_3+x_4\ot v_4,\\
\theta_2&=1\ot v_4,
\qquad
\, \theta_4=x_1^p\ot v_1+x_2^p\ot v_2+x_3^p\ot v_3+x_4^p\ot v_4,
\\
\omega_1&=1\ot x_1,
\qquad
\omega_3=x_3\ot x_1+x_4\ot x_2-x_1\ot x_3- x_2\ot x_4, \\
\omega_2&=1\ot x_2,
\qquad
\omega_4=x_3^p\ot x_1+x_4^p\ot x_2-x_1^p\ot x_3- x_2^p\ot x_4
\, .
\end{aligned}
$$
None of the transvection root spaces
are maximal, and the conclusion
of \cref{AllButOne} fails:
$$\{d\theta_E\}\cup\{\omega_i\theta_j:(i,j)\neq (a,b)\}
\text{ is {\em not} an }
S^G
\text{-basis of }
(S\ot V^*\ot V)^G
\text{ for any }
(a,b).
$$
}
\end{example}

\begin{remark}{\em
The arguments of \cref{dualizingsection,structuresection}
apply when $\characteristic \FF\neq 2$
to any finite group
$G$ with transvection root spaces 
maximal if $\det(g)^e=1$ for every $g\in G$, where $e$ is the maximal order of a diagonalizable reflection in $G$.
Indeed, in this case, $Q_{\det}$ is $\det$-invariant and the arguments in the proofs
of \cref{dualizingsection,structuresection}
show that $(S\ot \wedge V^*\ot V)^G$ is free over $S^G$ when $(S\ot V)^G$ is free over $S^G$, with explicit 
basis given by \cref{AllButOne}
or \cref{ExplicitBasis}.
However, we have yet to even find an example of a nonreflection group whose transvection root spaces are all maximal with $(S\ot V)^G$ free over $S^G$.
} %end \em
\end{remark}

%%%%%%%%%%%%%%%%%%%%%%%%%%%%%%%%%%%%%%%%%
%%%%%%%%%%%%%%%%%%%%%%%%%%%%%%%%%%%%%%%%%
\section{The Case of Characteristic 2}
\label{char2}

In this section, we consider the case when $\characteristic\FF$ is $2$ and $G$ is a reflection group with transvection root spaces maximal. 
Examples include
$\SL_n(\FF_q)$ and $\GL_n(\FF_q)$
for $q$ a power of $2$.
The structure
of $(S\ot \wedge V^*\ot V)^G$ 
may differ
from that 
in \cref{structuresection} where $\characteristic \FF\neq 2$.
Indeed, 
in \cref{appendix}, we must distinguish the groups whose pointwise stabilizers of hyperplanes consist of exactly one transvection and the identity element, i.e., $\delta_H=1$
for all $H$ in the reflection arrangement $\A$,
which only occurs
when $\characteristic \FF=2$ (see \cref{deltaH}).

\begin{theorem}
\label{char2structure}
Let $G\subset \GL(V)$ be a reflection group with transvection root spaces all maximal and $\characteristic \FF=2$.
\begin{itemize}\setlength{\itemindent}{-3ex}
\item If $|\A|=1$, $(S\ot V)^G$, $(S\ot V^*)^G$, and 
$(S\ot \wedge V^*\ot V)^G$ 
are free $S^G$-modules with structure given by  \cref{BasicDerivationsOneHyperplane} and \cref{OneHyperplane}.
\item If $|\A|\neq 1$
and $G_H$ comprises a single transvection and the identity for each $H\in \A$,
then $(S\ot V)^G$, $(S\ot V^*)^G$, and 
$(S\ot \wedge V^*\ot V)^G$ are free $S^G$-modules.
In this case, $n=2$ and
$(S\ot \wedge V^*\ot V)^G$ has rank $4$
over an $S^G$-submodule
of $(S\ot \Wedge V^*)^G$
of rank $2$.
\item Otherwise, 
$(S\otimes \Wedge V^*\otimes V)^G$ is a free $S^G$-module, provided $(S\ot V)^G$ is a free $S^G$-module, with bases given by
\cref{AllButOne,ExplicitBasis}
and module structure
given by \cref{structuretheorem2}.
\end{itemize}
\end{theorem}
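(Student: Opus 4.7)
The proof proceeds case by case. The first case ($|\A|=1$) is immediate: every reflecting hyperplane of $G$ lies in $\A$, so $G=G_H$, and \cref{BasicDerivationsOneHyperplane} together with \cref{OneHyperplane} yields the conclusion.

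For the third case, the first step is to observe that \cref{actstransitively} gives a transitive $G$-action on $\A$; since conjugate pointwise stabilizers share their isomorphism type, the indicator $\delta_H$ of \cref{deltaH} is constant on $\A$. Ruling out the second case then forces $\delta_H=0$ for every $H\in\A$, whereupon the exponent $a_{H,k}$ of \cref{adef} takes exactly the value used in the characteristic-$\neq 2$ arguments of \cref{structuresection}. The proofs of \cref{basisindependent}, \cref{AllButOne}, \cref{ExplicitBasis}, and \cref{structuretheorem2} rely only on the identity $\sum_m\omega_m\theta_m\doteq Q^e\,d\theta_E$ and the vanishing $\omega\wedge\omega=0$ for $1$-forms $\omega$, both valid in characteristic $2$, together with a degree comparison against the Saito target $\Delta_k$ of \cref{saitotheorem}. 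With $\delta_H=0$ uniformly the degree match holds, so these proofs transfer verbatim to yield the claimed freeness, the bases of \cref{AllButOne,ExplicitBasis}, and the module structure of \cref{structuretheorem2}.

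For the second case, the plan begins by deducing $n=2$: the unique transvection in $G_H$ contributes one root vector which, by maximality of the transvection root spaces, must span $H$, so $\dim H=1$. Consequently $e_H=1$, $b_H=1$, $\delta_H=1$ for every $H\in\A$, and $G$, generated by transvections of order $2$, lies in $\SL(V)$, so $\volx$ is $G$-invariant. Freeness of $(S\ot V^*)^G$ will follow from \cref{twistedalgebra} (with $S^G$ polynomial via the classification invoked in the proof of \cref{sameorbit}), and \cref{DualityOfExponents} yields freeness of $(S\ot V)^G$. For $(S\ot\wedge V^*\ot V)^G$ the plan is to apply \cref{saitotheorem} to a set
\[
\B=\{\theta_1,\theta_2\}\ \cup\ \{d\theta_E,\ \eta,\ \omega_r\theta_1,\ \omega_r\theta_2\}\ \cup\ \{\theta_1\volx,\ \theta_2\volx\}
\]
of size $8$, where $\omega_r$ is a chosen basic $1$-form and $\eta$ is a $G$-invariant rank-$1$ differential derivation whose polynomial degree makes the total rank-$1$ degree equal the Saito target $\Delta_1=2|\A|$. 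Using $\omega_1\wedge\omega_2\doteq Q\volx$ one verifies $\omega_r\,d\theta_E\doteq \theta_{j_0}\volx$ for some $j_0$ and, with $\eta$ chosen suitably, $\omega_r\,\eta\doteq \theta_{j_1}\volx$ for the complementary index. This exhibits the decomposition
\[
(S\otimes\wedge V^*\otimes V)^G=A\,\theta_1\oplus A\,\theta_2\oplus A\cdot d\theta_E\oplus A\cdot\eta
\]
as a free rank-$4$ module over the rank-$2$ $S^G$-submodule $A=S^G\oplus S^G\omega_r$ of $(S\otimes\wedge V^*)^G$.

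The principal obstacle is constructing the element $\eta$: the natural analog of the local element $\eta_0$ from the proof of \cref{OneHyperplane} (case $\delta=1$) is only $G_H$-invariant, so producing a genuine $G$-invariant of the required polynomial degree, and then verifying that $\omega_r\,\eta$ lands in the appropriate rank-$2$ direct summand, will require either a Reynolds-type averaging of $\eta_0$ over the orbit of $H$ or an explicit construction leveraging the classification of possible groups in the $n=2$, $\characteristic\FF=2$ setting (as in the proof of \cref{sameorbit}).
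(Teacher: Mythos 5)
Your handling of the first and third cases matches the paper: $|\A|=1$ is dispatched by \cref{BasicDerivationsOneHyperplane} and \cref{OneHyperplane}, and in the remaining case transitivity on $\A$ (\cref{actstransitively}, \cref{OneOrbit}) forces $\delta_H=0$ uniformly, after which the arguments of \cref{structuresection} go through unchanged. The second case, however, contains a genuine gap that you yourself flag as ``the principal obstacle'': the existence of the fourth rank-one invariant $\eta$. Everything else in that case (deducing $n=2$, $e=b=1$, the rank-$4$-over-rank-$2$ decomposition, the rank-$2$ basis $\theta_1\volx,\theta_2\volx$, the degree bookkeeping against $\Delta_1=2|\A|$) is routine once $\eta$ is in hand; the construction of $\eta$ is where essentially all of the work in the paper's proof lies, and neither of your two suggested routes is carried out or obviously viable. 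Reynolds averaging over $G$ is unavailable since $\characteristic\FF=2$ divides $|G|$ ($G\cong D_{2m}$ has even order); a relative transfer from $G_H$ is at least defined because $[G:G_H]=m$ is odd, but you would still have to show the result is nonzero, has the correct degree $|\A|-2$, and is $\F(S)$-independent of $d\theta_E$, $\om_r\theta_1$, $\om_r\theta_2$ --- none of which is automatic.

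For comparison, the paper does not average: after using Dickson's classification to pin down $G\cong D_{2m}$ with $m=|\A|$ odd and writing $G=\langle\begin{smallpmatrix}1&1\\0&1\end{smallpmatrix},\begin{smallpmatrix}1&0\\\alpha&1\end{smallpmatrix}\rangle$, it takes the explicit global element
$\eta_0=Q^{-1}\bigl(\om_2\theta_1+f^{\deg Q-2}\om_1\theta_2\bigr)$ with $f=x_1^2+x_1x_2+\alpha^{-1}x_2^2\in S^G$,
and the delicate point is showing $\eta_0$ has polynomial coefficients: one checks $x_2$ divides each coefficient of $Q\eta_0$ (using that $x_2$ divides $\partial Q/\partial x_1$, that the factors of $Q$ are coprime, and a rescaling of $Q$ to force a cancellation in the $\bigl(\partial Q/\partial x_2\bigr)^2+f^{\deg Q-2}x_1^2$ term), then upgrades $x_2\mid$ to $Q\mid$ via transitivity on the hyperplane orbit. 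Independence of the resulting four rank-one elements rests on the Euler identity $\om_1\theta_1+\om_2\theta_2=(\deg Q)\,Q\,d\theta_E\neq 0$, which uses that $\deg Q=|\A|$ is odd. Two smaller corrections: \cref{twistedalgebra} presupposes that $(S\ot V^*)^G$ is free rather than establishing it (the paper instead exhibits explicit basic derivations $\theta_1=\tfrac{\partial Q}{\partial x_2}\ot v_1+\tfrac{\partial Q}{\partial x_1}\ot v_2$, $\theta_2=\theta_E$ and dualizes via \cref{dualderivations}); and your appeal to $\om_1\wedge\om_2\doteq Q\volx$ would need $\det\coef(\om_1,\om_2)\doteq Q^{en-1}=Q$ from \cref{saitoderivations}, which again presumes the basic $1$-forms already constructed.
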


\begin{proof}
For the first claim, we just appeal to
\cref{BasicDerivationsOneHyperplane} and \cref{OneHyperplane}.
In the case of the third claim, 
$\delta_H=0$
for some $H\in \A$ (see
\cref{deltaH}) so in fact $\delta_H=0$ for all $H\in\A$ by \cref{actstransitively}
(see \cref{OneOrbit}). Then the arguments in the proofs of
 \cref{AllButOne,ExplicitBasis}
and \cref{structuretheorem2}
hold.

Now assume we are in the setting of the second
claim.
Then $\delta_H=1$ for every $H\in\A$
and
there are nonnegative integers $e$, $b$, and $a_k$
such that $e=e_H$, $b=b_H$, and $a_k=a_{H,k}$ for all $H\in\A$ (again see \cref{OneOrbit}).
In this case, $G$ contains no diagonalizable reflections and $e=1$. 
Further, each transvection root space of $G$ 
has dimension $b=1$
as it is
spanned by a single transvection.
But this forces $\dim V=n=2$
as each transvection root space is also maximal, and thus
$G$ is a finite subgroup of $\SL_2(\FF)$
as $G$ is generated by
its transvections.
Then $G$ must be
isomorphic (as
an abstract group) to some dihedral group
$D_{2m}$
of order $2m$ 
with $m$ odd
by the classification of Dickson (see \cite[Chapter~3, Section~6]{Suzuki}).
There
are exactly $m$ elements
of order $2$ in $D_{2m}$
as $m$ is odd, 
and thus $|\A|=m$ is odd
as the transvections are the only elements of order $2$ in $G$ and
there is only one transvection per
hyperplane. 
Hence $\deg Q$ is greater than $2$ and is odd as $|\A|\neq 1$.

As $G$ is generated
by two transvections,
there is some
 $\alpha\in\FF^\times$
 and
a basis $v_1, v_2$
of $V$ with dual basis
$x_1, x_2$ of $V^*$
so that 
$
G
=
\left\langle\begin{smallpmatrix}
1&1\\0&1
\end{smallpmatrix},
\begin{smallpmatrix}
1&0\\\alpha &1\end{smallpmatrix}
\right\rangle
$.
The following derivations
$\theta_1,\theta_2$
are an $S^G$-basis of $(S\ot V)^G$ (see, e.g., \cite[Section~B.2]{OrlikTerao}), and their dual $1$-forms $\omega_1,\omega_2$ are an $S^G$-basis of $(S\ot V^*)^G$ by \cref{dualderivations}:
$$
\begin{aligned}
\theta_1&=\tfrac{\partial Q}{\partial x_2}\otimes v_1+\tfrac{\partial Q}{\partial x_1}\otimes v_2
\, ,
&\theta_2&=x_1\otimes v_1+x_2\otimes v_2\, ,\\
\omega_1&=x_2\ot x_1+x_1\ot x_2
\, , 
&\omega_2&=\tfrac{\partial Q}{\partial x_1}\otimes x_1+\tfrac{\partial Q}{\partial x_2}\otimes x_2\, .
\end{aligned}
$$
We now give an explicit $S^G$-basis for each $(S\ot \wedge^k V^*\ot V)^G$
using \cref{saitotheorem}.

For $k=0$, the derivations $\theta_1,\theta_2$ are an $S^G$-basis as they are $\F(S)$-independent and have polynomial degrees summing to $\deg Q$.

For $k=1$, we argue that the four forms $\omega_1\theta_1,\omega_1\theta_2,d\theta_E,\eta_0$ are $\F(S)$-independent,
lie in
$(S\ot V^*\ot V)^G$,
and have
polynomial degrees summing to $2\deg Q$
for
$$
\eta_0=Q^{-1}(\omega_2\theta_1+f^{\deg Q-2}\omega_1\theta_2)
\quad
\text{ with }\ \ 
f=x_1^2+x_1 x_2 + \alpha^{-1}
x_2^2
\, \, \in S^G
\, .
$$
First notice that the forms $\omega_1\theta_1,\omega_1\theta_2,\omega_2\theta_1,\omega_2\theta_2$ are $\F(S)$-independent by \cref{wholesetindependent}, which implies that
$\omega_1\theta_1,\omega_1\theta_2,\omega_2\theta_1,d\theta_E$ are also $\F(S)$-independent since
$$
\omega_1\theta_1 + \omega_2\theta_2
=
(x_1\tfrac{\partial Q}{\partial x_1} + 
x_2\tfrac{\partial Q}{\partial x_2})\otimes x_1\otimes v_1 + 
(x_1\tfrac{\partial Q}{\partial x_1} + 
x_2\tfrac{\partial Q}{\partial x_2})\otimes x_2\otimes v_2
=(\deg Q)\, Q\, d\theta_E \neq 0
$$
as $\deg Q=|\A|$ is odd
and $Q$ is homogeneous, using Euler's identity.
This implies that
$\omega_1\theta_1,\omega_1\theta_2,d\theta_E,\eta_0$ 
are also $\F(S)$-independent.

Second,
we claim that $\eta_0\in
(S\ot V^*\ot V)^G$, i.e., that
$\eta_0$ has polynomial coefficients
and is $G$-invariant.
Since $f$ and $Q$ are $G$-invariant  (as $e=1$), so is $\eta_0$.
Observe that
\begin{small}
$$
\begin{aligned}
Q\eta_0=&\Big(\tfrac{\partial Q}{\partial x_1}\tfrac{\partial Q}{\partial x_2}+f^{\deg Q -2}
x_1x_2\Big)\otimes x_1\otimes v_1
+\Big(\big(\tfrac{\partial Q}{\partial x_2}\big)^2+f^{\deg Q -2}x_1^2\Big)\otimes x_2\otimes v_1
\\
&+\Big( \big(\tfrac{\partial Q}{\partial x_1} \big)^2
+f^{\deg Q -2}x_2^2\Big)\otimes x_1\otimes v_2
+\Big(\tfrac{\partial Q}{\partial x_1}\tfrac{\partial Q}{\partial x_2}
+f^{\deg Q -2}x_1x_2\Big)\otimes x_2\otimes v_2
\, .
\end{aligned}
$$
\end{small}%
Notice that $x_2$ divides $\tfrac{\partial Q}{\partial x_1}$ 
(apply \cite[Lemma~4]{HartmannShepler} to $\om_2$), which
implies that $x_2$
divides the first, third, and last coefficients in this expression.
Then, as the factors of $Q$ are relatively prime, $x_2$ does not divide $\tfrac{\partial Q}{\partial x_2}$,
and we may rescale $Q$
without loss of generality
so that the term
$x_1^{2\deg Q -2}$ in $\big(\tfrac{\partial Q}{\partial x_2}\big)^2$ cancels
with the same term
in $f^{\deg Q -2} x_1^2$,
which implies that $x_2$ divides
the second coefficient
as well.
Hence, $Q\eta_0\in x_2 S\ot V^*\ot V$.
Then as the reflecting hyperplanes of $G$ are 
all in the same orbit
by \cref{actstransitively}
and $Q\eta_0$ is invariant,
$Q\eta_{0}\in 
\ell_H S\ot V^*\ot V$
for any $H$ in $\A$, and thus
$Q\eta_0\in Q S\ot V^*\ot V$. Hence, $\eta_0$ indeed lies in 
$(S\ot V^*\ot V)^G$.
Finally, note that the polynomial
degrees of
$\omega_1\theta_1,\omega_1\theta_2,d\theta_E,\eta_0$ 
add to $2\deg Q$, and thus
these differential derivations are an $S^G$-basis for $k=1$. 

For $k=2$, the forms
$\omega_1 d\theta_E$
and $\omega_1\eta_0$ are an $S^G$-basis since they
are $\F(S)$-independent
with polynomial degrees that add to $\deg Q$.

Hence
$\theta_1, \theta_2, \omega_1\theta_1,\omega_1\theta_2,d\theta_E,\eta_0,\omega_1d\theta_E, \omega_1\eta_0$ are an $S^G$-basis
of
$(S\ot\wedge V^*\ot V)^G$.
Thus
$$
(S\ot \Wedge V^*\ot V)^G
=R\text{-span}\{\theta_1, \theta_2,
d\theta_E, \eta_0\}
$$
for $R=S^G\text{-span}\{1\ot 1, \om_1\}\subset(S\ot \Wedge V^*)^G$
and the $R$-module
$(S\ot \Wedge V^*\ot V)^G$
has rank $4$.

Alternatively, we note that
$\theta_1, \theta_2, \omega_1\theta_1,\omega_1\theta_2,\eta_0,d\theta_E,
(\omega_1\curlywedge \omega_2)\theta_1,(\omega_1\curlywedge\omega_2)\theta_2$ also
are an $S^G$-basis
of
$(S\ot\wedge V^*\ot V)^G$.
This is because
$(\omega_1\curlywedge \omega_2)\theta_1,(\omega_1\curlywedge\omega_2)\theta_2$
are an $S^G$-basis
for $k=2$:
they
are $\F(S)$-independent
with polynomial degrees that add to $\deg Q$.
We compare with
\cref{AllButOne} and
observe that this alternate $S^G$-basis 
of
$(S\ot\wedge V^*\ot V)^G$
is
$$\{d\theta_E,\eta_0\}
\cup
\{\omega_I^{\curlywedgeup}\theta_1,\omega_I^{\curlywedgeup}\theta_2: I\subset [2] \}\setminus\{\omega_2\theta_1,\omega_2\theta_2\}
\, .
\vspace{-5ex}
\hphantom{xx}
$$
\end{proof}

\cref{char2structure}
and \cref{freeimpliesfree}
imply the following.
\begin{cor}
\label{FreeImpliesFree}
Let $G\subset \GL(V)$ be a reflection group with transvection root spaces all maximal
and $\characteristic \FF$ arbitrary.
If $(S\ot V)^G$ is a free $S^G$-module, then so is
$(S\ot \wedge V^*\ot V)^G$.
\end{cor}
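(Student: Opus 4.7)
The plan is a straightforward case split on the characteristic of $\FF$, assembling the two preceding structural theorems into a single statement. When $\characteristic \FF \neq 2$, the conclusion is precisely the content of \cref{freeimpliesfree}, so nothing further is required in that regime.

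When $\characteristic \FF = 2$, I would invoke \cref{char2structure}, whose trichotomy partitions the possibilities as follows. If $|\A| = 1$, the theorem reports that $(S\ot V)^G$, $(S\ot V^*)^G$, and $(S\ot \wedge V^*\ot V)^G$ are each free over $S^G$ unconditionally, via the reduction to \cref{BasicDerivationsOneHyperplane} and \cref{OneHyperplane}. If $|\A| \neq 1$ but every pointwise stabilizer $G_H$ consists of a single transvection together with the identity, the theorem again asserts the freeness of all three modules unconditionally (and supplies the additional structural remark that $n=2$). In the remaining case, where some $G_H$ strictly contains the identity and a single transvection, the theorem furnishes exactly the conditional implication we want: freeness of $(S\ot V)^G$ forces freeness of $(S\ot \wedge V^*\ot V)^G$, with explicit bases provided by \cref{AllButOne,ExplicitBasis}.

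Concatenating the two characteristic regimes therefore yields the corollary. The only verification of any substance is that the three subcases in \cref{char2structure} genuinely exhaust the characteristic-$2$ possibilities, which is immediate once one recalls the definition of $\delta_H$ in \cref{deltaH} together with \cref{actstransitively} (and \cref{OneOrbit}), which ensure that $\delta_H$ takes a common value across $\A$ under the maximality hypothesis. There is no serious obstacle here; this is a summarizing corollary whose proof reduces to a one-line appeal once the case split is made explicit.
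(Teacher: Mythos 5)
Your proposal is correct and matches the paper exactly: the paper derives this corollary by combining \cref{freeimpliesfree} (for $\characteristic \FF\neq 2$) with the trichotomy of \cref{char2structure} (for $\characteristic \FF=2$), which is precisely your case split. The extra care you take in checking that the three characteristic-$2$ subcases are exhaustive is a reasonable elaboration of what the paper leaves implicit.
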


\begin{example}
\label{sl2f2example}
{\em 
Let $G=\SL_2(\FF_2)$.
Here, $Q=x_1^2x_2+x_1x_2^2$ and each $G_H$ consists of exactly one transvection and the identity, so $e=b=\delta=1$. 
Then $(S\ot V)^G$ is free over $S^G$ with basis $\theta_1,\theta_2$ and $(S\ot V^*)^G$  is free over $S^G$ with basis $\omega_1,\omega_2$ (see \cref{saitoderivations}) for
$$\begin{aligned}
\theta_1&=x_1^2\otimes v_1+x_2^2\otimes v_2,&
\theta_2=x_1\otimes v_1+x_2\otimes v_2,\\
\omega_1&=x_2 \otimes x_1+x_1\otimes x_2,&
\omega_2=x_2^2\otimes x_1 + x_1^2\otimes x_2.
\end{aligned}
$$
As $x_1^2+x_1x_2+x_2^2$ lies in $S^G$, the proof of \cref{char2structure} gives an $S^G$-basis of $(S\ot \wedge V^*\ot V)^G$:
\begin{small}
$$
\begin{aligned}
\theta_1&=x_1^2\ot 1\ot v_1+x_2^2\ot 1\ot v_2,\quad
\theta_2=x_1\ot 1\ot v_1 + x_2\ot 1\ot v_2,
\\
\omega_1\theta_1&=x_1^2x_2\ot x_1\ot v_1+x_2^3\ot x_1\ot v_2 + x_1^3\ot x_2\ot v_1 + x_1x_2^2\ot x_2\ot v_2,
\\
\omega_1\theta_2&=x_1x_2\ot x_1\ot v_1+x_2^2\ot x_1\ot v_2 + x_1^2\ot x_2\ot v_1 + x_1x_2\ot x_2\ot v_2,
\\
d\theta_E&=1\ot x_1\ot v_1 + 1\ot x_2\ot v_2, 
\\
\eta_0&=(x_1+x_2)\otimes x_1\otimes v_1+x_2\otimes x_1\otimes v_2+x_1\otimes x_2\otimes v_1+(x_1+x_2)\otimes x_2\otimes v_2,
\\
\omega_1d\theta_E&=x_1\ot x_1\wedge x_2\ot v_1+x_2\ot x_1\wedge x_2\ot v_2
\, ,
\quad%\text{ and }
\omega_1\eta_0=x_1^2\ot x_1\wedge x_2\ot v_1+x_2^2\ot x_1\wedge x_2\ot v_2
\, .
\end{aligned}
$$
\end{small}
}%end \em
\end{example}

%%%%%%%%%%%%%%%%%%%%%%%%%%%%%%%%%%%%%%%%%%%%%%%%%%%%%%%%%%%%%%%%%%%%%%%%%%%%%%%%%
\section{Prime Fields}
\label{fpsection}
We now consider finite groups $G$ acting on vector spaces over a prime field $\FF=\FF_p$ for a fixed prime $p$. 
We observe that
$(S\ot V)^G$, $(S\ot \Wedge V^*)^G$ and $(S\ot \wedge V^*\ot V)^G$ are free $S^G$-modules
when $G$ is a reflection group
with transvection root spaces maximal
and produce bases.
Examples include $\SL_n(\FF_p)$ and $\GL_n(\FF_p)$, see the next section.

\subsection*{Reflection arrangements}
We first examine
arrangements over prime fields when the
transvection roots spaces are maximal.
Recall that $\A=\A(G)$ is the collection of reflecting hyperplanes of a group $G$
acting linearly.
\begin{lemma}
\label{extrahyperplanes}
Let $G\subset \GL(V)$ be a finite group acting on $V=\FF_p^n$, for $p$ a prime.
If $H, H'\in \A$ with 
the transvection root space of $H$ maximal, then 
$\A$ contains 
%$
%\ker\big(\ell_{H'}+a\, \ell_H\big)\in\A$ 
%for all
%$a\in\FF_p.$
all hyperplanes in $V$ containing $H\cap H'$.
\end{lemma}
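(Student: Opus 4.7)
The plan is to exploit the $\FF_p$-linearity of root vectors of transvections about $H$ together with conjugation of a reflection about $H'$.

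First I would dispose of the trivial case $H=H'$: here $H\cap H'=H$ and the only hyperplane containing $H$ is $H$ itself, which lies in $\A$. So assume $H\neq H'$, which means $H\cap H'$ has codimension~$2$ and the hyperplanes containing it form a pencil parametrized by projective points in the $2$-dimensional quotient $V/(H\cap H')$; over $\FF_p$ there are exactly $p+1$ such hyperplanes.

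Next I would choose a basis $v_1,\dots,v_n$ with $v_1,\dots,v_{n-1}$ spanning $H$ and dual basis $x_1,\dots,x_n$, so that $\ell_H=x_n$. The pencil of hyperplanes containing $H\cap H'$ then consists of $H=\ker x_n$ together with the $p$ hyperplanes $\ker(\ell_{H'}+c\,x_n)$ for $c\in\FF_p$. It suffices to realize each of these as a reflecting hyperplane of $G$.

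The key observation is that, since we are over $\FF_p$, the set of root vectors of transvections in $G$ about $H$ (together with $0$) is closed under addition and scalar multiplication: if $t_1,t_2$ are transvections about $H$ with root vectors $v_1,v_2\in H$, then a direct calculation gives $t_1t_2(v)=v+\ell_H(v)(v_1+v_2)$ since $\ell_H(v_2)=0$, and $t_1^k$ has root vector $kv_1$. Hence this set is an $\FF_p$-subspace of $H$ which equals the transvection root space $\R_H\cap H$. By hypothesis this equals $H$, so every nonzero vector of $H$ arises as the root vector of some transvection in $G$ about $H$.

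Now fix a reflection $s'\in G$ about $H'$. For any transvection $t\in G$ about $H$ with root vector $v_t\in H$, the conjugate $t s't^{-1}$ is a reflection about $t(H')$, and a short calculation using $t^{-1}(w)=w-x_n(w)\,v_t$ gives
$$t(H')=\ker\bigl(\ell_{H'}-\ell_{H'}(v_t)\,x_n\bigr).$$
Since $H'\neq H$, the restriction $\ell_{H'}|_H$ is a nonzero linear functional and therefore surjective onto $\FF_p$. As $v_t$ ranges over the nonzero vectors of $H$, the scalar $\ell_{H'}(v_t)$ takes every value in $\FF_p$, and together with the case $t=1_G$ (giving $H'$ itself) we obtain each hyperplane $\ker(\ell_{H'}+c\,x_n)$ for $c\in\FF_p$ as a reflecting hyperplane of $G$. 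Combined with $H\in\A$, this exhausts all $p+1$ hyperplanes containing $H\cap H'$.

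The main technical point is the closure claim for the root vectors of transvections about $H$; everything else is a routine computation with the conjugation $t s't^{-1}$ and a dimension count on the pencil.
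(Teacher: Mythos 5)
Your proof is correct and follows essentially the same route as the paper: both arguments realize each hyperplane $\ker(\ell_{H'}+c\,\ell_H)$ in the pencil as the reflecting hyperplane of a conjugate $t^{-1}s't$ of a reflection $s'$ about $H'$ by a suitable transvection about $H$. The only (harmless) difference is in bookkeeping: the paper picks a single transvection $t$ with root vector outside $H'$ and uses its powers $t^a$, whereas you first upgrade maximality of the transvection root space to the stronger claim that \emph{every} nonzero vector of $H$ is a root vector, via closure of root vectors under addition and $\FF_p$-scaling.
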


\begin{proof}
Say $H=\ker \ell_H$ and $H'=\ker \ell_{H'}$ for $\ell_H,\ell_{H'}\in V^*$.
A hyperplane in $V$ containing
$H\cap H'$ must be the kernel of
$\ell_{H'}+ c \, \ell_H$ 
for some $c \in \FF_p$.
As the transvection root space of $H$ is maximal, $G$ contains a transvection $t$ 
about $H$
whose root vector $v_t$ lies outside of $H'$, i.e., $\ell_{H'}(v_t)\neq 0$.
Set $a= (\ell_{H'}(v_t))^{-1} c$ in $\FF_p$ and 
regard $a$ as an integer.
A straightforward calculation confirms that the kernel of $\ell_{H'}+c\ell_H$ 
is the reflecting hyperplane of 
$t^{-a}\, s'\, t^a$
for any reflection $s'$ in $G$ about $H'$
and thus lies in $\A$.
 \end{proof}

%%%%%%%%%%%%%%%%%%%%%%%%%%%%%%%%%%%%%%%%%
\begin{proposition}
\label{nicearrangement}
Let $G\subset \GL(V)$ be a finite group 
acting on $V=\FF_p^n$,
for $p$ a prime,
with transvection 
root spaces all maximal. 
Then the reflection arrangement of $G$ coincides with that for the general linear group 
(embedded in $\GL(V)$)
of some subspace $W$ of $V$: 
$$
\A(G)=\A(\GL(W)) 
\quad\text{ for some }
\GL(W)\subset \GL(V)\, .
$$
Thus there is a basis $x_1,\dots,x_n$ of $V^*$ 
with $\A(G)$
defined by, for some $m$, 
\begin{small}
$$
Q=x_1
\Big(\prod_{\alpha_1\in\FF_p}
\hspace{-1ex}
x_2+\alpha_1x_1\Big)
\Big(\prod_{\alpha_1,\alpha_2\in\FF_p}\hspace{-2ex}
x_3+\alpha_2x_2+\alpha_1x_1\Big)\cdots \Big(\prod_{\alpha_1,\dots,\alpha_{m-1}\in\FF_p}
\hspace{-3ex}
x_m+\alpha_{m-1}x_{m-1}+\dots+\alpha_1x_1\Big)
\, .
$$
\end{small}
\end{proposition}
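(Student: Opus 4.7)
The plan is to exhibit a codimension-$m$ subspace $U\subset V$ such that $\A(G)$ consists of exactly the hyperplanes of $V$ containing $U$. Any complement $W$ to $U$ then yields an embedding $\GL(W)\hookrightarrow\GL(V)$ fixing $U$ pointwise, whose reflecting hyperplanes in $V$ are the $H'\oplus U$ with $H'$ ranging over hyperplanes of $W$; these biject with hyperplanes of $V$ containing $U$, so $\A(G)=\A(\GL(W))$. To construct $U$, set $U:=\bigcap_{H\in\A}H$ and $m:=\codim_V U$, and pick $H_1,\dots,H_m\in\A$ with $H_1\cap\cdots\cap H_m=U$ by iteratively adding a hyperplane from $\A$ that strictly shrinks the running intersection.

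The key step is the following induction on $k$: for any $H_1,\dots,H_k\in\A$, every hyperplane of $V$ containing $H_1\cap\cdots\cap H_k$ lies in $\A$. The cases $k=1$ and $k=2$ are trivial and \cref{extrahyperplanes} respectively, the latter applicable because every transvection root space of $G$ is maximal by hypothesis. For $k\geq 3$, given a hyperplane $H\supset H_1\cap\cdots\cap H_k$, linear algebra gives $\ell_H=\sum_{i=1}^k c_i\ell_{H_i}$; if some $c_i$ vanishes we conclude $H\in\A$ by induction after deleting that $H_i$. Otherwise the form $\ell':=\sum_{i<k}c_i\ell_{H_i}$ is nonzero (else $\ell_H$ is proportional to $\ell_{H_k}$, giving $H=H_k\in\A$ outright), so its kernel $H'$ contains $H_1\cap\cdots\cap H_{k-1}$ and belongs to $\A$ by induction, and since $H\supset H'\cap H_k$, one more application of \cref{extrahyperplanes} yields $H\in\A$. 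Applying this claim with the $H_1,\dots,H_m$ chosen above gives $\A(G)=\{H\subset V:H\supset U\}$.

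To recover the explicit product, I would pick a basis $v_1,\dots,v_n$ of $V$ with $v_{m+1},\dots,v_n$ spanning $U$, and take $x_1,\dots,x_n$ as the dual basis, so that $U=\ker x_1\cap\cdots\cap\ker x_m$. The hyperplanes of $V$ containing $U$ are exactly the kernels of nonzero linear forms in $\text{span}(x_1,\dots,x_m)$; listing one representative per projective point by normalizing the highest-indexed nonzero coefficient to $1$ produces precisely the $1+p+\cdots+p^{m-1}$ factors displayed in the statement, matching $Q=\prod_{H\in\A}\ell_H$ up to a scalar. The main obstacle is the inductive step: one must check that the intermediate form $\ell'$ is nonzero in the crucial subcase and verify that \cref{extrahyperplanes} can be invoked at each stage. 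The latter is clean since that lemma only requires one of its two hyperplanes to have maximal transvection root space, and the hypothesis supplies this for every $H\in\A$.
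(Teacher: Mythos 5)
Your proposal is correct and follows essentially the same route as the paper: both build a chain $H_1,\dots,H_m$ whose intersection determines $\A(G)$, and both show $\A(G)$ is closed under taking kernels of linear combinations of the defining forms by (iterated) appeal to \cref{extrahyperplanes}, then read off $Q$ from a dual basis. The only difference is that you spell out the induction on the number of forms in the linear combination, which the paper leaves implicit in a one-line citation of \cref{extrahyperplanes}.
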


\begin{proof}
Let $H_1\in\mathcal{A}=\A(G)$ be arbitrary and set $\mathcal{A}_1=\{H_1\}$.
Inductively choose some $H_i\in\mathcal{A}\backslash\mathcal{A}_{i-1}$ and set $\mathcal{A}_i=\{ H\in\mathcal{A} : H\supset H_1\cap\cdots\cap H_i\}$ to obtain a maximum set
of hyperplanes $H_1,\dots,H_m$ 
for which  $\mathcal{A}_m=\mathcal{A}$.
Choose $x_i$ in $V^*$ so that
$H_i=\ker(x_i)\in\A$
and notice that $x_1, \ldots, x_m$
is $\FF$-independent since
$\dim ( H_1\cap \ldots \cap H_i )
=n-i$ for all $i\leq m$.
We extend to a basis
$x_1, \ldots, x_n$ of $V^*$.
By \cref{extrahyperplanes},
any nonzero linear combination of 
linear forms defining hyperplanes in $\A$
defines a hyperplane again in $\A$.
Thus for each $i\leq m$,  
$\A_i=\{H : \ell_H\in\FF_p\text{-span}\{x_1,\dots,x_i\}\}$,
and the claim follows.
\end{proof}

\subsection*{Free arrangements}
Recall that an arrangement
of hyperplanes $\A$ is {\em free} if the set of derivations $D(\A)$ along the arrangement
is a free $S$-module,
see \cite{OrlikTerao}, where
$$
D(\A)=
\{\theta\in\text{Der}_S : \theta(\ell_H)\in \ell_HS\text{ for all }H\in\A\}\, . 
$$ 
(Recall that we 
identify
$\sum_i f_i \ot v_i$
in $S\ot V$
with the derivation
$\sum_i f_i\ot \del/\del x_i$.)
Bases for the free modules
in the next corollary 
are given
in \cref{dualderivations} and \cref{AllButOne} using the derivations in the proof.
Also see the proofs of
\cref{char2structure}
and \cref{OneHyperplane}.

\begin{cor}
If $G \subset GL(V )$
is a finite group acting on 
$V = \FF_p^n$ for $p$ a prime
with transvection root spaces all maximal,
then
$\A(G)$ is a free arrangement.
If, in addition,
$G$ is a reflection group,
then  $(S\ot V)^G$,
$(S\ot \wedge V^*)^G$,
and $(S\ot \wedge V^*\ot V)^G$ 
are free $S^G$-modules. 
\end{cor}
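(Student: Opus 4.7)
The plan is to combine \cref{nicearrangement} with the module-freeness results of the earlier sections. For the first claim, \cref{nicearrangement} yields $\A(G)=\A(\GL(W))$ for some subspace $W\subset V$, where $\GL(W)$ is embedded in $\GL(V)$ via the identity on a fixed complement of $W$. The embedded $\GL(W)$ is itself a reflection group on $V$ (its reflections are lifts of reflections in $\GL_m(\FF_p)$ on $W$), and $S^{\GL(W)}$ is a polynomial ring by Dickson's theorem --- namely, the polynomial algebra generated by the Dickson invariants on $W^*$ together with the coordinates on $(W^\perp)^*$. Since $S^{\GL(W)}$ is polynomial, one can produce $n$ homogeneous $\GL(W)$-invariant derivations $\theta_1,\ldots,\theta_n$ whose exterior product is $Q\ts\volv$ up to a scalar, so $(S\otimes V)^{\GL(W)}$ is free by \cref{saitoderivations}. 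A semi-invariance argument in the style of the proof of \cref{BasicDerivationsOneHyperplane} places each $\theta_i$ in $D(\A)$, and the classical Saito criterion for hyperplane arrangements then gives that $D(\A)=D(\A(G))$ is a free $S$-module, proving that $\A(G)$ is a free arrangement.

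For the second claim, earlier results reduce freeness of all three invariant modules to freeness of $(S\otimes V)^G$: by \cref{DualityOfExponents}, $(S\otimes V)^G$ is free if and only if $(S\otimes V^*)^G$ is free; once the latter is free, \cref{twistedalgebra} gives that $(S\otimes\wedge V^*)^G$ is a free exterior algebra (hence a free module) over $S^G$; and \cref{FreeImpliesFree} yields freeness of $(S\otimes\wedge V^*\otimes V)^G$ from freeness of $(S\otimes V)^G$. By \cref{saitoderivations}, freeness of $(S\otimes V)^G$ reduces to exhibiting $n$ homogeneous $G$-invariant derivations in $(S\otimes V)^G$ whose exterior product equals $Q\ts\volv$ up to a scalar. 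Since every reflection in $G$ fixes $W^\perp=\bigcap_{H\in\A(G)}H$ pointwise and $G$ is generated by its reflections, $G$ fixes $W^\perp$ pointwise; hence each of the $n-m$ basis vectors of $W^\perp$ yields a degree-zero $G$-invariant derivation. The remaining $m$ basic derivations are built from Dickson-type derivations on $W$, modified to preserve full $G$-invariance, guided by the explicit form of $Q$ provided by \cref{nicearrangement}.

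The main obstacle is constructing these last $m$ basic derivations. Let $\bar G$ denote the image of $G$ in $\GL(W)$ under reduction modulo $W^\perp$. Since the transvection root spaces of $G$ are maximal and we are over $\FF_p$, the $W$-components of the transvections in $G$ force $\bar G$ to contain every transvection on $W$, hence $\SL(W)\subseteq \bar G\subseteq \GL(W)$, so \cref{slglsection} supplies $\bar G$-invariant basic derivations on $W$. The subtlety is that the kernel of $G\to\bar G$ sits inside $\text{Hom}(W,W^\perp)$ and acts nontrivially on the $W^\perp$-components of any candidate derivation, imposing constraints that couple the Dickson $W$-part to a correction in $W^\perp$. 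Meeting these constraints while preserving the Saito determinantal identity --- in a manner parallel to the explicit constructions in \cref{slglsection} --- is the heart of the argument.
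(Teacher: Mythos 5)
Your overall skeleton matches the paper's: invoke \cref{nicearrangement} to identify $\A(G)$ with $\A(\GL(W))$, use the classical Saito criterion for freeness of $D(\A)$, and reduce freeness of all three invariant modules to freeness of $(S\ot V)^G$ via \cref{saitoderivations}, \cref{DualityOfExponents}, \cref{twistedalgebra}, and \cref{FreeImpliesFree}. The degree-zero derivations $1\ot v_i$ for $v_i\in\bigcap_{H\in\A}H$ are also handled as in the paper. But there is a genuine gap: you never construct the remaining $m$ basic derivations, and you explicitly defer this as ``the heart of the argument,'' worrying about corrections in the $W^\perp$-components forced by the kernel of $G\to\bar G$. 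A proof that ends by naming its hardest step as still to be done is not a proof.

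The resolution is much simpler than the coupling problem you set up, and it is the one point your proposal misses. Because the field is the \emph{prime} field $\FF_p$, the map $x\mapsto x^{p^{k}}$ is $\FF_p$-linear on linear forms, so the Frobenius-twisted Euler derivations
$$
\theta_i=\sum_{j=1}^{n} x_j^{\,p^{\,m-i}}\otimes v_j \qquad (1\leq i\leq m)
$$
taken over \emph{all} of $V$ (not just a $W$-part needing a $W^\perp$-correction) are invariant under the entire group $\GL_n(\FF_p)$, hence under $G$ automatically; no constraints from $\ker(G\to\bar G)$ arise, and no claim such as $\SL(W)\subseteq\bar G$ is needed. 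Together with $\theta_i=1\ot v_i$ for $i>m$, the coefficient determinant reduces to the Moore determinant $\det\{x_j^{\,p^{\,m-i}}\}_{1\leq i,j\leq m}\doteq Q$ (with $Q$ as displayed in \cref{nicearrangement}), which settles both the membership in $D(\A)$ (indeed $\theta_i(\ell_H)=\ell_H^{\,p^{\,m-i}}$ since the coefficients of $\ell_H$ lie in $\FF_p$) and, when $G$ is a reflection group, the basis property via \cref{saitoderivations}. Your reduction steps for the second claim are otherwise correct and agree with the paper.
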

\begin{proof}
By \cref{nicearrangement},
 $\A=\A(G)=\A(\GL(W))$
for a subspace $W$ of $V$
of dimension $m$.
We use the basis $x_1,\ldots, x_n$ of $V^*$ of \cref{nicearrangement}
and dual basis $v_1, \ldots, v_n$ of $V$ and
set
$$
\theta_i=
\left\{
\begin{aligned}
&
\sum_{j=1}^n x_j^{p^{m-i}}\otimes v_j
&&\quad\text{for }1\leq i\leq m\, ,\\
&1\otimes v_i
&&\quad\text{for }m< i\leq n
\end{aligned}\right.
$$
so that $\det\coef(\theta_1, \ldots,
\theta_n)\doteq Q$.
Then as each $\theta_i$ lies in $D(\A)$,
the $\theta_i$ generate
$D(\A)$ as an $S$-module
and $\A$ is a free arrangement
by
the original Saito's Criterion
\cite[Theorem 4.19]{OrlikTerao}.
Now assume further that $G$
is a reflection group.
Notice that each $\theta_i$ for $i \leq m$
is invariant under $\GL_n(\FF_p)$
(see \cref{slglsection})
and that $\theta_i$
for $m< i$
is invariant under each reflection of $G$
since $v_{m+1},\ldots, v_n$
lie in $\bigcap_{H\in\A} H$.
Hence 
$\theta_1, \ldots, \theta_n$ are $G$-invariant
and
are an $S^G$-basis of $(S\ot V)^G$
by \cref{saitoderivations}.
Then $(S\ot \wedge V^*)^G$ 
and 
$(S\ot \wedge V^*\ot V)^G$
are both
free $S^G$-modules
by \cref{DualityOfExponents}
and
\cref{FreeImpliesFree}.
\end{proof}

%%%%%%%%%%%%%%%%%%%%%%%%%%%%%%%%%%%%%%%%%%%%%%%%%%%%%%%%%%%%%%%%%%%%%%%%%%%%%%%%%%
\section{Special and General Linear Groups and Groups in between}
\label{slglsection}
We now turn our attention to
the special linear group, the general linear group,
and all groups in between over a finite field
$\FF_q$ for $q$ a prime power.
Let $G$ be a group with  
$\SL_n(\FF_q)\subset G\subset \GL_n(\FF_q)$. 
Then $G$ is generated
by reflections,
and, as $G$ contains $\SL_n(\FF_q)$,
each transvection root space for $G$ is maximal
and
there is a single
orbit of reflecting hyperplanes
(see \cref{actstransitively}).
The maximal order of a diagonalizable reflection in $G$ is $e:=|G:\SL_n(\FF_q)|$.
Here, $\A=\A(G)$ is the collection
of all hyperplanes $H$ in $V=\FF_q^n$
and its defining polynomial $Q=\prod_{H\in \A} \ell_H$
thus has degree
$|\A|=[n]_q=1+q+\dots+q^{n-1}$.

\subsection*{Invariant polynomials}
Basic invariant
polynomials 
$f_1,\ldots, f_n$
with 
$S^G=\FF[f_1,\ldots, f_n]$
are given in terms
of the classical
Dickson invariants
$D_{n,i}$ (see \cite{Steinberg}
and \cite{SmithBook})
with $\deg D_{n,i}= q^n-q^i$
for $i=0, \ldots, n-1$:
$$
f_1=Q^e
\ \ 
\text{and}
\ \ 
f_i=D_{n,i-1}
\ \ \text{for } 2\leq i\leq n
\, .
$$

\subsection*{Invariant derivations}
Here,
$(S\ot V)^G$ is a free $S^G$-module 
with basis
$$
\theta_i
=\sum_{j=1}^n
x_j^{q^{n-i}}\otimes v_j
\ \ 
\text{for } 1\leq i\leq n
%\quad
%\text{so }
%\theta_n
%=\theta_E
%=\sum_{j=1}^n
%x_j\otimes v_j
\, 
$$
with respect to a fixed ordered
basis $v_1, \ldots, v_n$
of $V$ and dual basis
$x_1, \ldots, x_n$ of $V^*$ 
(see \cite[Example~4.24]{OrlikTerao}) 
since
$Q=\det\coef(\theta_1,\ldots,\theta_n)$
after rescaling $Q$ if necessary 
(see \cref{saitoderivations}).

\subsection*{Invariant $\mathbf{1}$-forms}
\cref{dualderivations}
gives a dual $S^G$-basis 
of $(S\ot V^*)^G$:  explicitly,
let $\om_1,\dots,\om_n$ in $S\ot V^*$ be the $1$-forms
whose coefficient matrix is
(for $t$ indicating transpose)
$$\coef(\om_1,\ldots,\om_n)
=
Q^e\,\big( \coef(\theta_1,\ldots,\theta_n)\big)^{-t}
\, .
$$
Then $\det\coef(\om_1,\ldots,\om_n)=Q^{en-1}$
and $\om_1,\dots,\om_n$ are a free
$S^G$-basis of $(S\ot V^*)^G$ by 
\cref{saitoderivations}.  These moreover
generate $(S\ot \wedge V^*)^G$
via the twisted wedging of
\cref{twistedwedgeproduct}:
$
(S\otimes \wedge V^*)^G
=
\bigcurlywedge_{S^G}
\{ \om_1,\ldots, \om_n\}
$
(see \cref{twistedalgebra}
and \cite{HartmannShepler}).
See \cite[Section~6.2]{HartmannShepler} 
for basic $1$-forms
in terms of the exterior derivatives
$df_i$ of the Dickson invariants.

%$$
%\om_1=d(f_1Q^{q-e-1})/Q^{q-e-1},
%\ \ 
%\text{and} 
%\ \ 
%\om_i
%=d(f_i)/Q^{q-e-1}
%\ \ 
%\text{for } 2\leq i\leq n
%\, .
%$$

\subsection*{Numerology}
For $m_i=\deg \om_i=e[n]_q-q^{n-i}$
and $m_i^*=\deg \theta_i=q^{n-i}$ (see \cref{DualityOfExponents}),
$$
m_i+m_i^*=
e|\A|
\, .$$ 
Explicitly,
the duality gives
 (also see
\cref{q-Catalan}
and \cref{CoxeterNumber})
$$
\begin{aligned}
&m_i=[n]_q-q^{n-i}
&&\quad\text{ and }\quad{}
m_i^*=q^{n-i}
\quad
&&\text{for $G=\SL_n(\FF_q)\, ,$
and }
\\
&m_i=(q-1)[n]_q-q^{n-i}
&&\quad\text{ and }\quad
m_i^*=q^{n-i}
&&\text{for $G=\GL_n(\FF_q)\, .$}
\end{aligned}
$$

\subsection*{Invariant
differential derivations}
For $G=\SL_2(\FF_2)$, 
we construct in \cref{sl2f2example} 
an explicit basis for 
$(S\ot \wedge V^*\ot V)^G$ as a free $S^G$-module.
\cref{AllButOne} and \cref{char2structure} 
imply a similar result for all other groups 
between $\SL_n(\FF_q)$ and $\GL_n(\FF_q)$:
%%%%%%%%%%%%%%%%%%%%%%%
\begin{cor}
Let $G$ be a group with $\SL_n(\FF_q)\subset G \subset \GL_n(\FF_q)$
and $G\neq \SL_2(\FF_2)$.
Then
$(S\otimes \wedge V^*\otimes V)^G$ is a free $S^G$-module with basis
$$
\{ d\theta_E\}
\cup
\big\{
(\om_{i_1}\curlywedge\cdots\curlywedge\om_{i_k})\, \theta_j :
1\leq i_1<\ldots < i_k\leq n,
1\leq j\leq n,
0\leq k\leq n\}
\setminus \{\om_n \theta_n \}
\, .
$$
\end{cor}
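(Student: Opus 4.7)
The plan is to reduce directly to the structure theorems already established: \cref{AllButOne} when $\characteristic\FF_q\neq 2$, and \cref{char2structure} when $\characteristic\FF_q=2$, in both cases applied with $r=n$, which delivers exactly the claimed basis.

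First I would assemble the hypotheses, nearly all of which are supplied by the preceding discussion in this section. Since $\SL_n(\FF_q)\subset G$, the subgroup $G$ is generated by transvections and is therefore a reflection group, and its transvection root spaces are all maximal (the root vectors of transvections in $\SL_n(\FF_q)$ fixing a given $H$ already span $H$). Moreover, $(S\ot V)^G$ has been explicitly exhibited above as a free $S^G$-module with basic derivations $\theta_1,\ldots,\theta_n$ and dual $1$-forms $\om_1,\ldots,\om_n$ afforded by \cref{dualderivations}.

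If $\characteristic\FF_q\neq 2$, \cref{AllButOne} with $r=n$ then gives the result immediately. If $\characteristic\FF_q=2$, I would identify the correct branch of \cref{char2structure}. The case $n=1$ is already covered by \cref{dim1example} and falls into the first branch with $|\A|=1$, so I may assume $n\geq 2$, giving $|\A|=[n]_q\geq 3>1$ and ruling out the first branch. To land in the third branch (whose bases are precisely those of \cref{AllButOne}), I need to verify $\delta_H=0$ for some---hence, by \cref{actstransitively} and \cref{OneOrbit}, every---$H\in\A$. Since $\SL_n(\FF_q)\subset G$, the pointwise stabilizer $G_H$ contains the full root subgroup of transvections in $\SL_n(\FF_q)$ fixing $H$, which has order $q^{n-1}$; hence $\delta_H=1$ would force $q^{n-1}\leq 2$, and for $n\geq 2$ this occurs only when $(n,q)=(2,2)$, that is, the excluded case $G=\SL_2(\FF_2)=\GL_2(\FF_2)$.

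The main obstacle is thus the characteristic-$2$ bookkeeping: pinpointing $\SL_2(\FF_2)$ as the unique exceptional group in the family via the order of the root subgroup, and noting that there $\delta_H=1$ would force the genuinely different basis structure of \cref{sl2f2example} rather than the one claimed. Everything else is a direct citation of the machinery already developed in \cref{structuresection,char2}.
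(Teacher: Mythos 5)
Your proposal is correct and follows essentially the same route as the paper's own (one-line) argument: the paper likewise derives this corollary by citing \cref{AllButOne} and \cref{char2structure}, and your verification that $\delta_H=0$ in characteristic $2$ (via the root subgroup of order $q^{n-1}$ inside $G_H$, which isolates $\SL_2(\FF_2)$ as the unique exception) supplies exactly the bookkeeping the paper leaves implicit. One small slip: a group strictly between $\SL_n(\FF_q)$ and $\GL_n(\FF_q)$ is generated by reflections but not by transvections (transvections have determinant $1$), though this does not affect your argument, since all you need is that $G$ is a reflection group with maximal transvection root spaces, which the section's preamble already establishes.
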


\begin{example}{\em
For the reflection group $G=\SL_2(\FF_3)$
acting on $V=\FF_3^2$, 
$$
\begin{aligned}
&\text{ basic derivations }
&&\theta_1=x_1^3\ot v_1+x_2^3\ot v_2\, ,
&&\theta_2 = x_1\ot v_1 + x_2\ot v_2
\  \text{ and }
\\
&\text{ basic $1$-forms } 
&&\omega_1=x_2\ot x_1- x_1\ot x_2\, ,
&&\omega_2=-x_2^3\ot x_1 + x_1^3\ot x_2
\end{aligned}
$$
generate $(S\ot V)^G$ and $(S\ot V^*)^G$, respectively, as free $S^G$-modules.
Then the $S^G$-module $(S\otimes \wedge V^*\otimes V)^G$
is also free with basis
\begin{small}
$$
\begin{aligned}
\theta_1&=x_1^3\ot 1\ot v_1 + x_2^3\ot 1\ot v_2\, ,\quad
\theta_2=x_1\ot 1\ot v_1 + x_2\ot 1\ot v_2\, ,\\
\omega_1\theta_1&=x_1^3x_2\ot x_1\ot v_1 + x_2^4\ot x_1\ot v_2 - x_1^4\ot x_2\ot v_1 - x_1x_2^3\ot x_2\ot v_2\, , \\
\omega_1\theta_2&=x_1x_2\ot x_1\ot v_1 + x_2^2\ot x_1\ot v_2 - x_1^2\ot x_2\ot v_1 - x_1x_2\ot x_2\ot v_2\, ,\\
\omega_2\theta_1&=-x_1^3x_2^3\ot x_1\ot v_1 - x_2^6\ot x_1\ot v_2 + x_1^6\ot x_2\ot v_1 + x_1^3x_2^3\ot x_2\ot v_2\, ,\\
d\theta_E&=1\ot x_1\ot v_1+1\ot x_2\ot v_2\, ,\\
(\omega_1\curlywedge \omega_2)\theta_1&=x_1^3\ot x_1\wedge x_2\ot v_1 + x_2^3\ot x_1\wedge x_2\ot v_2\, ,\\
(\omega_1\curlywedge\omega_2)\theta_2&=x_1\ot x_1\wedge x_2\ot v_1 + x_2\ot x_1\wedge x_2\ot v_2\, .
\end{aligned}
$$
\end{small}
Here, $S^G=\FF_3[f_1, f_2]$ for
$f_1=x_1^3x_2-x_1x_2^3$ and $f_2=x_1^6+x_1^4x_2^2+x_1^2x_2^4+x_2^6$.
}%end \em
\end{example}

\begin{example}{\em 
For the reflection group $G=\GL_2(\FF_3)$
acting on $V=\FF_3^2$,
the $S^G$-modules $(S\ot V)^G$
and $(S\ot V^*)^G$ are both free with respective
bases 
$\theta_1, \theta_2$
(basic derivations)
and 
$\om_1, \om_2$
(basic $1$-forms)
given by
\begin{small}
$$
\begin{aligned}
&\theta_1=x_1^3\otimes v_1+x_2^3\otimes v_2\, ,\quad \theta_2=x_1\otimes v_1+x_2\otimes v_2\, ,\\
&\omega_1=(x_1^3x_2^2-x_1x_2^4)\otimes x_1 
+ (x_1^2x_2^3-x_1^4x_2)\otimes x_2,\ 
\omega_2=(x_1x_2^6-x_1^3x_2^4)\otimes x_1 + (x_1^6x_2-x_1^4x_2^3)\otimes x_2\, .
\end{aligned}
$$
\end{small}
The $S^G$-module $(S\otimes \wedge V^*\otimes V)^G$
is then free with
basis
\begin{small}
$$
\begin{aligned}
\theta_1=&\ x_1^3\ot 1\ot v_1 + x_2^3\ot 1\ot v_2\, ,\quad
\theta_2=x_1\ot 1\ot v_1 + x_2\ot 1\ot v_2\, ,\\
\omega_1\theta_1
=&\ (x_1^6x_2^2-x_1^4x_2^4)\ot x_1\ot v_1+(x_1^3x_2^5-x_1x_2^7)\ot
x_1\ot v_2
\\
&
+(x_1^5x_2^3-x_1^7x_2)\ot x_2\ot v_1+(x_1^2x_2^6-x_1^4x_2^4)\ot x_2\ot v_2\, ,\\
\omega_1\theta_2
=& \ (x_1^4x_2^2-x_1^2x_2^4)\ot x_1\ot v_1+(x_1^3x_2^3-x_1x_2^5)\ot x_1\ot v_2
\\
&
+(x_1^3x_2^3-x_1^5x_2)\ot x_2\ot v_1+(x_1^2x_2^4-x_1^4x_2^2)\ot x_2\ot v_2\, ,\\
\omega_2\theta_1
=&\ (x_1^4x_2^6-x_1^6x_2^4)\ot x_1\ot v_1+(x_1x_2^9-x_1^3x_2^7)\ot x_1\ot v_2
\\
&
+(x_1^9x_2-x_1^7x_2^3)\ot x_2\ot v_1+(x_1^6x_2^4-x_1^4x_2^6)\ot x_2\ot v_2,\\
d\theta_E =&\ 1\ot x_1\ot v_1+1\ot x_2\ot v_2\, ,\\
(\omega_1\curlywedge\omega_2)\theta_1=
&\ (x_1^6x_2-x_1^4x_2^3)\ot x_1\wedge x_2\ot v_1 + (x_1^3x_2^4-x_1x_2^6)\ot x_1\wedge x_2\ot v_2
\, , \\
(\omega_1\curlywedge\omega_2)\theta_2=
&\ (x_1^4x_2-x_1^2x_2^3)\ot x_1\wedge x_2\ot v_1 + (x_1^3x_2^2-x_1x_2^4) \ot x_1\wedge x_2\ot v_2
\, .
\end{aligned}
$$
\end{small}
Here, $S^G=\FF_3[f_1, f_2]$ for
 $f_1=x_1^6+x_1^4x_2^2+x_1^2x_2^4+x_2^6$ and $f_2=x_1^6x_2^2+x_1^4x_2^4+x_1^2x_2^6$.
}%end \em
\end{example}

%%%%%%%%%%%%%%%%%%%%%%%%%%%%%%%%%%%%%%%%%
%%%%%%%%%%%%%%%%%%%%%%%%%%%%%%%%%%%%%%%%%
\appendix
\section{}
\label{appendix}
The technical analysis in this appendix
provides the heavy lifting for determining the Saito criterion for invariant differential derivations in \cref{saitocriterion}.
Throughout this section, we fix 
a nontrivial 
finite group $G\subset \GL(V)$ acting on 
$V=\FF^n$ that
fixes a single hyperplane $H=\ker \ell$ in $V$
for some linear form $\ell$ in $V^*$.
We fix $e=e_H\geq 1$ and $b=b_H\geq 0$ throughout 
and use the basis 
$v_1,\dots,v_n$ of $V$ with 
dual basis $x_1,\dots,x_n$ of $V^*$
as in 
\cref{ConvenientBasis}, as well as
the (possibly empty) set of transvections $t_1,\dots,t_{b}$ in $G$
and an element
$s$ in $G$
with either $s=1_G$
when $e=1$
or $s$ is a 
diagonalizable reflection of maximal order $e>1$ in $G$. 

%%%%%%%%%%%%%%%%%%%%%%%%%%%%%%%%%%%%%%%%%
\subsection*{Action on basis elements.}
We record the action of $s$ and each
transvection $t_m$ 
on basis elements $v_i$ of $V$ and 
$x_I$ of $\wedge V^*$.
For a
fixed $m$ and $I=\{i_1, \ldots, i_k\}\subset\{1,\dots,n\}$
with $i_1<\ldots< i_k$,
$n\notin I$, and $m\in I$,
define $\ep_{I,m}=\pm 1$ 
by
$$
x_{\sigma(I)}=\ep_{I,m} \ x_{\sigma(i_1)} \wedge \cdots \wedge x_{\sigma(i_k)}
\, 
\quad\text{ for the transposition }
\sigma=(m\ n)
\, .
$$
Also, set $\lambda=\det(s)$. 
Then for $1\leq m\leq b$,
\begin{small}
 $$\begin{aligned}
t_m(x_I)=&
\begin{cases}x_I& \text{when $n\in I$ or $m\notin I$},
\\
x_I - \ep_{I,m} x_{\sigma(I)} & \text{when $n\notin I$ and $m\in I$},
\end{cases}
&&
&t_m(v_j)=&
\begin{cases}
v_j&\;\,
\text{when $j\neq n$}
\, ,
\\
v_m+v_n&\;\,
\text{when $j=n$}
\, ,
\end{cases}
\\
s(x_I)=&
\begin{cases}
x_I&
\text{when $n\notin I$},\\
\lambda^{-1}x_I
\hphantom{xxxxxx}
& 
\text{when $n\in I$},
\end{cases}
\qquad
&&
&\text{and}\ \ s(v_j)=&
\begin{cases}
v_j&
\quad\text{when $j\neq n$}
\, ,\\
\lambda v_n
\hphantom{xx}&
\quad\text{when $j=n$}
\, .
\end{cases}
\end{aligned}$$
\end{small}
Note that
$\ep_{I,m}=1$ and $t_m(x_I)=x_m-x_n$
when $I=\{m\}$.

%%%%%%%%%%%%%%%%%%%%%%%%%%%%%%%%%%%%%%%%%
\subsection*{Action on polynomials}
We require some straightforward observations.
\begin{lemma}
\label{sf-f}
For any reflection $g$ 
about 
$H=\ker \ell$
and any polynomial $f$ in $S$,
$\ell$ divides $g(f)-f$. Also,
$\ell^2$ divides $g(f)-f$
whenever $\ell$ divides $f$.
\end{lemma}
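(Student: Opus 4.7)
The plan is to first establish the key reduction: for any reflection $g$ about $H = \ker\ell$ and any linear form $x \in V^*$, one has $g(x) - x \in \FF\,\ell$. This can be read off directly from the matrix forms given earlier in this appendix (in the basis of \cref{ConvenientBasis}), or argued abstractly as follows. Since $g^{-1}$ fixes $H$ pointwise, $g^{-1}(v) - v$ lies in the root space $\FF\, v_{g^{-1}}$ for every $v \in V$, and the coefficient, viewed as a linear functional on $V$, vanishes on $H$ and is therefore proportional to $\ell$. Dualizing $(g\cdot x)(v) = x(g^{-1}(v)) = x(v) + \ell(v)\, x(v_{g^{-1}})$ yields the claim. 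Setting $x = \ell$ in particular gives $g(\ell) = c_g\,\ell$ for some scalar $c_g \in \FF^{\times}$, with $c_g = 1$ whenever $g$ is a transvection, since then its root vector lies in $H$.

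The first assertion then follows by induction on the polynomial degree of $f$. The base case $\deg f = 0$ is immediate as $g$ fixes scalars. For $\deg f \geq 1$, expand $f$ in terms of the generators and apply the Leibniz-style identity
\[
g(x_i\, f_i) - x_i\, f_i \ =\ g(x_i)\bigl(g(f_i) - f_i\bigr) + \bigl(g(x_i) - x_i\bigr)\, f_i
\]
with $\deg f_i < \deg f$. Both summands lie in $\ell\cdot S$: the first by the inductive hypothesis, and the second by the key reduction of the previous paragraph. Summing yields $\ell \mid g(f) - f$.

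For the second assertion, write $f = \ell\, h$ for some $h \in S$. Applying the first assertion to $h$ gives $g(h) = h + \ell\, h'$ for some $h' \in S$, and combining with $g(\ell) = c_g\,\ell$ one obtains
\[
g(f) - f \ =\ c_g\,\ell\,(h + \ell\, h') - \ell\, h \ =\ \ell\bigl((c_g - 1)\, h + c_g\,\ell\, h'\bigr).
\]
When $g$ is a transvection, $c_g = 1$ and the bracketed expression reduces to $c_g\,\ell\, h'$, so $\ell^2 \mid g(f) - f$ immediately. For a diagonalizable reflection in $G_H$ one proceeds similarly using the explicit form of $s$ from \cref{ConvenientBasis}, but now must control the residual scalar $c_g \neq 1$. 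The main obstacle is precisely to show that the leftover contribution $(c_g - 1)\, h$ is divisible by $\ell$; I would address this by carefully tracking the $\ell$-adic filtration under the diagonal action of $s$ and exploiting that $\ell$ is a semi-invariant for $s$ with character $c_g$, so that the scalar can be absorbed against the filtration shift.
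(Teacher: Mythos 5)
The paper offers no proof of \cref{sf-f} at all --- it is filed under ``straightforward observations'' --- so there is nothing to compare against line by line. Your argument for the first assertion (reduce to linear forms via $g(x)-x = x(v_{g^{-1}})\,\ell$, then induct on degree with the Leibniz identity) is correct and is the standard one, and your treatment of the second assertion when $g$ is a transvection is also correct: there $g(\ell)=\ell$, so $g(\ell h)-\ell h=\ell\,(g(h)-h)\in\ell^{2}S$.

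The gap is the diagonalizable case of the second assertion, and it is not a gap you can close: the claim is simply false there. You correctly isolate the obstruction as showing $\ell \mid (c_g-1)h$, but then defer to ``tracking the $\ell$-adic filtration'' and ``absorbing the scalar against the filtration shift.'' No such argument exists. Take $f=\ell$ itself (so $h=1$): since $g(\ell)=\det(g)^{-1}\ell$, one gets $g(f)-f=(\det(g)^{-1}-1)\,\ell$, which for a diagonalizable reflection has $\det(g)\neq 1$ and hence is divisible by $\ell$ exactly once. The honest conclusion is that the second sentence of the lemma holds precisely when $g(\ell)=\ell$, i.e.\ when $g$ is a transvection; you should have recorded this counterexample rather than gesturing at a fix. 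This is a defect in the lemma as literally stated rather than in your transvection argument: every invocation of the $\ell^{2}$ statement in the paper (part e) and Cases 2 and 3 of the proof of \cref{lhdivides}) applies it to a transvection $t_m$ or $t_{m'}$, so your proof covers all actual uses, but as a proof of the statement ``for any reflection $g$'' it is incomplete, and the missing case cannot be supplied.
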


\begin{lemma}
\label{modeh}
Let $\det(s)=\lambda$ of order
$e\geq 1$.
Then for any polynomial $f$,
\begin{enumerate}[label=(\alph*)]
\item $s(f)=\lambda f$ implies $\ell^{\ts e-1}$ divides $f$,

\item $s(f)=f$ and $\ell$ divides $f$ implies $\ell^{\ts e}$ divides $f$,

\item $s(f)=\lambda^{-1}f$ and $\lambda\neq 1$ implies $\ell$ divides $f$, and

\item $s(f)=\lambda^{-1}f$ and $\ell^2$ divides $f$ implies $\ell^{\ts e+1}$ divides $f$.

\end{enumerate}
\end{lemma}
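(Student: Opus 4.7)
The plan is to diagonalize the action of $s$ on $S$ and read each divisibility claim directly off a monomial expansion. Using the convenient basis of \cref{ConvenientBasis}, we have $\ell = x_n$, the element $s$ fixes $x_1, \ldots, x_{n-1}$, and $s(x_n) = \lambda^{-1} x_n$ (the latter from the dual action, since $s(v_n) = \lambda v_n$). Any $f \in S$ thus admits a unique expansion $f = \sum_{i \geq 0} f_i\, x_n^i$ with $f_i \in \FF[x_1, \ldots, x_{n-1}]$, and the action becomes $s(f) = \sum_i \lambda^{-i} f_i\, x_n^i$. Each $s$-eigenvalue hypothesis in (a)--(d) then translates directly into a residue condition on $i$ modulo $e$, and each $\ell^m$-divisibility hypothesis translates into the vanishing of $f_0, \ldots, f_{m-1}$.

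With this setup, each of the four parts reduces to identifying the smallest surviving index $i$. In (a), $s(f) = \lambda f$ forces $\lambda^{i+1} = 1$, i.e.\ $i \equiv -1 \pmod{e}$, so every nonzero $f_i$ has $i \geq e-1$. In (b), $s(f) = f$ forces $e \mid i$, and $\ell \mid f$ kills $f_0$, leaving $i \geq e$. In (c), $s(f) = \lambda^{-1} f$ forces $i \equiv 1 \pmod{e}$; the hypothesis $\lambda \neq 1$ excludes $e = 1$, so the smallest allowable index is $i = 1$, giving $\ell \mid f$. In (d), the same residue condition combined with $\ell^2 \mid f$ (which kills $f_0$ and $f_1$) forces $i \geq e+1$.

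The only mildly delicate point is verifying that $\ell^m \mid f$ is equivalent to $f_0 = \cdots = f_{m-1} = 0$ in this expansion; this is immediate because $x_n$ is a nonzerodivisor in $S$ and the $f_i$ lie in the complementary subring $\FF[x_1, \ldots, x_{n-1}]$ on which $x_n$ does not act. Edge cases with $e = 1$ either force trivial conclusions (e.g.\ $\ell^{e-1} = 1$ in (a)) or are excluded by hypothesis (as in (c)), so no real obstacle arises: once the diagonalization is written down, each assertion is a one-line check on indices.
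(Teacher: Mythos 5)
Your proposal is correct and follows essentially the same route as the paper: the paper also works in the basis of \cref{ConvenientBasis} where $\ell = x_n$, $s$ fixes $x_1,\dots,x_{n-1}$, and $s(x_n)=\lambda^{-1}x_n$, and reads off the divisibility from the residue of the $x_n$-degree modulo $e$ (it writes out only part (a) and notes the rest are similar). Your version merely makes the expansion $f=\sum_i f_i x_n^i$ and the remaining three cases explicit, which is fine.
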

\begin{proof}
We prove part (a); the rest follows from 
similar arguments.
Since $s(x_n)=\lambda^{-1}x_n$ and $s$ fixes $x_1,\dots,x_{n-1}$ as well as $x_n^{e}$, 
the degree in $x_n$ of each monomial appearing in $f$ must be $-1\mod e$, and thus
$\ell^{e-1}=x_n^{e-1}$
divides each monomial.
\end{proof}
 
\begin{lemma}
\label{Mxm/xn}
Say $1\leq m\leq b$.
If a monomial $M=x_1^{a_1}\cdots x_{n-1}^{a_{n-1}}x_n$ appears in $t_m(f)-f$ with nonzero coefficient $c$, 
then $a_m+1\neq 0$ in $\FF$ 
and $Mx_m/x_n$ appears in $f$ with nonzero coefficient $-(a_m+1)^{-1}c$.
\end{lemma}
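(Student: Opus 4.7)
\smallskip

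The plan is to carry out a direct binomial expansion of $t_m(f)$ and track which monomials of $f$ can possibly contribute a term with $x_n$ to the first power in $t_m(f)-f$. First I would record the dual action of $t_m$ on $V^*$: from the matrix in \cref{ConvenientBasis}, taking inverse transpose gives $t_m(x_m)=x_m-x_n$ and $t_m(x_i)=x_i$ for $i\neq m$, extended multiplicatively to $S$.

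Writing $f=\sum_\beta c_\beta\, x^\beta$ with $x^\beta=x_1^{b_1}\cdots x_n^{b_n}$, I would expand
\[
t_m(x^\beta)-x^\beta \;=\; x_1^{b_1}\cdots x_{m-1}^{b_{m-1}}\,\big((x_m-x_n)^{b_m}-x_m^{b_m}\big)\,x_{m+1}^{b_{m+1}}\cdots x_n^{b_n}
\;=\;\sum_{j=1}^{b_m}\binom{b_m}{j}(-1)^{j}\,x_1^{b_1}\!\cdots x_m^{b_m-j}\!\cdots x_{n-1}^{b_{n-1}} x_n^{b_n+j}.
\]
Next I would ask which pairs $(\beta,j)$ with $j\ge 1$ produce the monomial $M=x_1^{a_1}\cdots x_{n-1}^{a_{n-1}}x_n$. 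Matching exponents coordinate by coordinate forces $b_i=a_i$ for $i\neq m,n$, $b_n+j=1$, and $b_m-j=a_m$. Since $j\ge1$ and $b_n\ge 0$, the only possibility is $b_n=0$, $j=1$, and $b_m=a_m+1$. Thus there is a unique contributing monomial, namely $x^\beta=Mx_m/x_n$, and its contribution to the coefficient of $M$ in $t_m(f)-f$ is $-\binom{a_m+1}{1}\,c_\beta=-(a_m+1)\,c_\beta$.

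Equating this with the hypothesis that the coefficient of $M$ equals $c\neq 0$ gives $(a_m+1)\,c_\beta=-c$. Hence $a_m+1$ must be nonzero in $\FF$, and the coefficient of $Mx_m/x_n$ in $f$ is $c_\beta=-(a_m+1)^{-1}c$, which is exactly the claim. The only real subtlety is the characteristic-$p$ case: $a_m+1$ could vanish in $\FF$, but the assumption $c\neq 0$ rules this out automatically. No further structural input is needed, so the main (and essentially only) task is the careful index bookkeeping in the binomial expansion above.
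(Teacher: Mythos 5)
Your proposal is correct and follows essentially the same route as the paper: both arguments expand $t_m$ on a general monomial of $f$ using $t_m(x_m)=x_m-x_n$ and observe that the only source monomial that can contribute a term with $x_n$-degree exactly $1$ is $Mx_m/x_n$ (via the $j=1$ term of the binomial expansion), yielding the coefficient $-(a_m+1)c_\beta$. The paper phrases this by isolating the leading term $-a_m\,x^a x_n/x_m$ and noting all other terms have strictly larger $x_n$-degree, but the bookkeeping is the same as yours.
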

\begin{proof}
For any monomial $x_1^{a_1}\cdots x_n^{a_n}$,
$$
\begin{aligned}
t_m(x_1^{a_1}\cdots x_n^{a_n}) -x_1^{a_1}\cdots x_n^{a_n}
=-a_mx_1^{a_1}\cdots
x_{m-1}^{a_{m-1}}
x_m^{a_m-1}
x_{m+1}^{a_{m+1}}
\cdots x_n^{a_n+1}\; +\; \text{other terms}
.
\end{aligned}$$
The claim then follows from the observation that this expression is zero for $a_m=0$,
and 
otherwise all monomials appearing in this expression have degree in $x_n$ strictly greater than $a_n$, degree in $x_m$ strictly less than $a_m$, and unchanged degrees in the other variables.
\end{proof}

%%%%%%%%%%%%%%%%%%%%%%%%%%%%%%%%%%%%%%%%%
\subsection*{Main lemma} 

At last, the following lemma analyzes the polynomial coefficients of an invariant differential derivation.
Recall that $[b]=\{1, \ldots, b\}$.
%$I\in\tbinom{[n]}{k}$ if $I\subset[n]=\{1,\dots,n\}$ and $|I|=k$. 

\begin{lemma}
\label{lhdivides}
For any differential derivation $\displaystyle \eta=\hspace{-1ex}\sum_{I
\in\tbinom{[n]}{k},\, 1\leq j\leq n}
\hspace{-3ex}
f_{I,j}\otimes x_I\otimes v_j$ in $\displaystyle(S\otimes \wedge^k V^*\otimes V)^{G}$,
\begin{enumerate}

\item [a)] $\ell$ divides $f_{I,n}$ when $n\notin I$,

\item [b)] $\ell^{\ts e}$ divides $f_{I,n}$ when $n\in I$ and $I\cap[b]\neq\varnothing$,

\item [c)] $\ell^{\ts e-1}$ divides $f_{I,j}$ for $j<n$ when $n\in I$,

\item [d)] $\ell^{\ts e}$ divides $f_{I,j}$ for $j<n$ when $n\notin I$ and $I\cap[b]\neq\varnothing$ and $I\cap[b]\neq\{j\}$,

\item [e)] $\ell^{\ts e}$ divides $f_{I,j} - \ep_{I,m} f_{\sigma(I),n}$ for $j=m\leq b$ when $n\notin I$ and $I\cap[b]=\{m\}$
%for $\sigma=(m\ n)$
, and

\item [f)] $\ell^{\ts e+1}$ divides $f_{I,n}$ when $n\notin I$ and $I\cap[b]\neq\varnothing$, unless $G$ consists of exactly one transvection and the identity element.
	
\end{enumerate}
\end{lemma}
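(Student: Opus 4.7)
The plan is to exploit the $G$-invariance $g(\eta)=\eta$ for the generators $g=s$ (when $e>1$) and $g=t_m$ for each $1\le m\le b$. For each such $g$ I would expand
$g(\eta)=\sum_{I,j}g(f_{I,j})\otimes g(x_I)\otimes g(v_j)$
using the explicit formulas for $g(x_I)$ and $g(v_j)$ recorded at the start of \cref{appendix}, then equate the coefficient of each basis element $x_{I'}\otimes v_{j'}$ with the corresponding coefficient of $\eta$. This converts invariance into a family of polynomial identities on the $f_{I,j}$, to be fed into \cref{sf-f}, \cref{modeh}, and \cref{Mxm/xn}.

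From the $s$-equations one reads off $s(f_{I,j})=\lambda^{\delta_{n\in I}-\delta_{j=n}}f_{I,j}$ for every pair $(I,j)$, which gives (c) directly via \cref{modeh}(a) and gives (a) for $e>1$ via \cref{modeh}(c); it also shows that each polynomial appearing in (b), (d), (e) is $s$-invariant. When $e=1$ we instead use a transvection $t_m$ with $m\in[b]$ (which exists, else $G$ would be trivial): the coefficient identity for $x_I\otimes v_m$ yields $t_m(f_{I,n})=f_{I,m}-t_m(f_{I,m})$, whose right side is divisible by $\ell$ by \cref{sf-f}. Part (b) is then (a) combined with \cref{modeh}(b). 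Part (d) uses an $m\in I\cap[b]$ with $m\ne j$, which exists since $I\cap[b]\neq\varnothing$ and $I\cap[b]\ne\{j\}$: the coefficient identity for $x_{\sigma(I)}\otimes v_j$ gives $t_m(f_{\sigma(I),j})-f_{\sigma(I),j}=\epsilon_{I,m}t_m(f_{I,j})$, so \cref{sf-f} yields $\ell\mid f_{I,j}$, and \cref{modeh}(b) upgrades to $\ell^{\,e}\mid f_{I,j}$.

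For part (e), the plan is to consider three coefficient identities in concert: the one for $x_I\otimes v_m$ yields $t_m(f_{I,m})+t_m(f_{I,n})=f_{I,m}$; the one for $x_{\sigma(I)}\otimes v_n$ yields $t_m(f_{\sigma(I),n})-\epsilon_{I,m}t_m(f_{I,n})=f_{\sigma(I),n}$; and the one for $x_{\sigma(I)}\otimes v_m$, after substituting the first two and using $\ell\mid f_{I,n}$ from (a), reduces to $t_m(f_{\sigma(I),m})-f_{\sigma(I),m}\equiv \epsilon_{I,m}F\pmod{\ell\,S}$, where $F=f_{I,m}-\epsilon_{I,m}f_{\sigma(I),n}$. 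Since the left side is divisible by $\ell$ by \cref{sf-f}, so is $F$; since $F$ is $s$-invariant, \cref{modeh}(b) then gives $\ell^{\,e}\mid F$.

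Part (f) is the step I expect to be the main obstacle. Writing $f_{I,n}=\ell\, g$ as supplied by (a), the $s$-identity $s(f_{I,n})=\lambda^{-1}f_{I,n}$ forces $s(g)=g$, so by \cref{modeh}(d) it suffices to show $\ell\mid g$. The mod-$\ell^{2}$ reduction of $t_m(f_{I,m})-f_{I,m}=-\ell\, t_m(g)$ (for any $m\in I\cap[b]$, using $t_m$-invariance of $f_{I,n}$) identifies $g|_{\ell=0}$ with $\partial f_{I,m}/\partial x_m|_{\ell=0}$, so the task reduces to showing $f_{I,m}|_{\ell=0}$ is independent of $x_m$. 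The plan is to secure this by extracting an additional independent constraint from $G_H$, namely from an $s$-conjugate $s^k t_m s^{-k}$ of distinct strength (when $e>1$), a transvection $t_{m'}$ in a different direction $m'\in[b]\setminus\{m\}$ (when $b>1$), or by iterating \cref{Mxm/xn} through higher orders in $x_n$ when $|G_H|>2$. The only case in which no such additional constraint is available is $\characteristic\FF=2$ with $b=e=1$ and $|G_H|=2$, which is precisely the excluded case in the statement; the invariant $\eta_0$ constructed in the proof of \cref{char2structure} (with $f_{\{1\},2}=x_2$) confirms that the conclusion of (f) really does fail there.
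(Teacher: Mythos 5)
Your treatment of parts a) through e) is essentially the paper's argument: you derive the same coefficient identities from invariance under $t_m$ and $s$ and feed them into \cref{sf-f} and \cref{modeh} in the same way. (One small imprecision: for part b) you invoke ``(a)'', which as stated concerns $n\notin I$; what you actually need is that the identity $t_m(f_{I,m})=f_{I,m}-f_{I,n}$ also holds when $m\in I$, so that $\ell\mid f_{I,n}$ again follows from \cref{sf-f} before \cref{modeh}(b) is applied with $s(f_{I,n})=f_{I,n}$.)

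Part f) is where there is a genuine gap, and it is exactly the part that carries the weight of the lemma. Your reduction to showing $\ell^2\mid f_{I,n}$ (and then invoking \cref{modeh}(d)) matches the paper, and writing $f_{I,n}=\ell g$ with $g\equiv \partial f_{I,m}/\partial x_m \pmod{\ell}$ is a reasonable reformulation. But two things go wrong from there. First, you reduce to showing that $f_{I,m}|_{\ell=0}$ is \emph{independent of} $x_m$; in positive characteristic this is strictly stronger than the vanishing of $\partial f_{I,m}/\partial x_m$ modulo $\ell$ (a term $x_m^p$ has zero derivative but is not independent of $x_m$), and indeed $t_m$-relative-invariance only controls $f_{I,m}$ up to elements of $\FF[x_m^p-x_mx_n^{p-1},x_i:i\neq m]$, so the stronger statement is not available. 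Second, and more importantly, the ``additional independent constraint'' is never actually produced. The paper's proof splits into four cases and the two hard ones -- $\characteristic\FF\neq 2$, and $\characteristic\FF=2$ with a second transvection $t_1^{(\alpha)}$, $\alpha\neq 0,1$, about $H$ -- each require a concrete monomial-coefficient computation: one assumes a monomial $M$ of $x_n$-degree $1$ survives in $f_{I,n}$, uses $f_{I,n}\in\FF[x_m^p-x_mx_n^{p-1},x_i:i\neq m]$ to pin down its $x_m$-degree, tracks the coefficient of $Mx_m/x_n$ through $f_{I,m}$, $f_{\sigma(I),m}$ and $f_{\sigma(I),n}$ via \cref{Mxm/xn} and the invariance identities, and derives a contradiction (from $2\ep_I c\neq 0$ when $\characteristic\FF\neq 2$, or from comparing the $t_1$- and $t_1^{(\alpha)}$-identities to force $c=0$ in characteristic $2$). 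Your phrase ``iterating \cref{Mxm/xn} through higher orders in $x_n$ when $|G_H|>2$'' does not describe this argument and is too vague to verify; as written it is not a proof of part f) in those cases. The cases $e>1$ and $b>1$ are easier (an extra eigenvalue or an extra transvection direction does give divisibility almost immediately), but they too need the two or three lines of computation the paper supplies rather than a pointer to their existence.
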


\begin{proof}
We take all sums over subsets $I \in\tbinom{[n]}{k}$
and $1\leq j\leq n$
as indicated.

\noindent\textbf{Action of transvections.}
Consider the transvection $t_m$ for $m\in[b]$ when $b> 0$
and set $\sigma=(m\ n)$
and $\ep_I=\ep_{I,m}$:
\begin{small}
$$\begin{aligned}
t_m(\eta)
&=
\sum_{I,j}t_m(f_{I,j})\otimes t_m(x_I)\otimes t_m(v_j)\\
&=
\sum_{\substack{I,j :\\n\in I\text{ or }m\notin I,\\j\neq n}}
t_m(f_{I,j})\otimes x_I\otimes v_j\ 
+\sum_{\substack{I :\\n\in I\text{ or }m\notin I}}
t_m(f_{I,n})\otimes x_I\otimes (v_m+v_n)\\
&\;\;\;\;\;
+\sum_{\substack{I,j :\\n\notin I,\, m\in I,\\j\neq n}}
t_m(f_{I,j})\otimes (x_I-\ep_Ix_{\sigma(I)})\otimes v_j\ 
+\sum_{\substack{I :\\n\notin I,\, m\in I}}
t_m(f_{I,n})\otimes (x_I-\ep_Ix_{\sigma(I)})\otimes (v_m+v_n)
\, .
\end{aligned}$$
\end{small}
We reindex and regroup to
express $t_m(\eta)$ as
\begin{small}
$$\begin{aligned}
&\sum_{\substack{I,j :\\n\notin I\text{ or }m\in I,\\j\neq m}}
t_m(f_{I,j}) \otimes x_I\otimes v_j
\ 
+
\sum_{\substack{I :\\n\notin I\text{ or }m\in I}}
\big(t_m(f_{I,m})+t_m(f_{I,n})\big)
\otimes x_I\otimes v_m\\
&\;\;\;\;\;
+\sum_{\substack{I,j :\\n\in I,\, m\notin I,\\j\neq m}}
\big(t_m(f_{I,j})-\ep_{\sigma(I)}t_m(f_{\sigma(I),j})\big)
\otimes x_I\otimes v_j\\
&\;\;\;\;\;
+\sum_{\substack{I :\\n\in I,\, m\notin I}}
\big(t_m(f_{I,m})+t_m(f_{I,n})-\ep_{\sigma(I)}t_m(f_{\sigma(I),m})-\ep_{\sigma(I)} t_m(f_{\sigma(I),n})\big)
\otimes x_I\otimes v_m
\, .
\end{aligned}$$
\end{small}
We equate the polynomial coefficients of $\eta$
and $t_m(\eta)$ and deduce that
\begin{small}
$$
f_{I,j}=
\begin{cases}
t_m(f_{I,j})
&\hspace{-1ex}
\text{for $j\neq m$ when $n\notin I$\;\;\, or $m\in I$},
\\
t_m(f_{I,m})+t_m(f_{I,n})
&\hspace{-1ex}
\text{for $j=m$ when $n\notin I$\;\;\, or $m\in I$},
\\
t_m(f_{I,j})-\ep_{\sigma(I)}t_m(f_{\sigma(I),j})
&\hspace{-1ex}
\text{for $j\neq m$ when $n\in I$ and $m\notin I$},
\\
t_m(f_{I,m}) +t_m(f_{I,n}) - \ep_{\sigma(I)} t_m(f_{\sigma(I),m}) - \ep_{\sigma(I)}t_m(f_{\sigma(I),n})
&\hspace{-1ex}
\text{for $j=m$ when $n\in I$ and $m\notin I$}.
\end{cases}
$$
\end{small}
We solve for $t_m(f_{I,j})$ one case at a time
and conclude
that
\begin{small}
\begin{equation}
\label{transvectionaction}
t_m(f_{I,j})=
\begin{cases}
f_{I,j}
&\text{for $j\neq m$ when $n\notin I$\;\;\, or $m\in I$},
\\
f_{I,m}-f_{I,n}
&\text{for $j=m$ when $n\notin I$\;\;\, or $m\in I$},
\\
f_{I,j}+\ep_{\sigma(I)} f_{\sigma(I),j}
&\text{for $j\neq m$ when $n\in I$ and $m\notin I$},
\\
f_{I,m} - f_{I,n} + \ep_{\sigma(I)} f_{\sigma(I),m} - \ep_{\sigma(I)} f_{\sigma(I),n}
&\text{for $j=m$ when $n\in I$ and $m\notin I$}.
\end{cases}
\end{equation}\\
\end{small}
\noindent\textbf{Action of diagonalizable reflection.}
Since $s$ is diagonal with $\det (s)=\lambda$
of order $e\geq 1$,
\begin{small}
$$\begin{aligned}
s(\eta)&=
\sum_{I,j} s(f_{I,j})\otimes s(x_I)\otimes s(v_j)
=
\sum_{\substack{I,j :n\notin I\\j\neq n}}
s(f_{I,j})\otimes x_I\otimes v_j\ 
+\sum_{I :n\notin I}
\lambda s(f_{I,n})\otimes x_I\otimes v_n\\
&\hspace{30ex}
+\sum_{\substack{I,j : n\in I\\j\neq n}}
\lambda^{-1}s(f_{I,j})\otimes x_I\otimes v_j\ 
+\sum_{I : n\in I}s(f_{I,n})\otimes x_I\otimes v_n.
\end{aligned}
$$
\end{small}
We  equate the polynomial coefficients of $\eta$
and $s(\eta)$ to see that
\begin{small}
\begin{equation}
\label{diagonalizableaction}
s(f_{I,j})=
\begin{cases}
f_{I,j}
&\text{for $j\neq n$ when $n\notin I$},
\\
\lambda^{-1}f_{I,n}
&\text{for $j=n$ when $n\notin I$},
\\
\lambda f_{I,j}
&\text{for $j\neq n$ when $n\in I$},
\\
f_{I,n}
&\text{for $j=n$ when $n\in I$}.
\end{cases}
\end{equation}
\end{small}
Now we use equations \cref{transvectionaction}
and \cref{diagonalizableaction} to show $\ell$ to certain powers divides various $f_{I,j}$
using the fact that $G$
contains either a diagonalizable reflection
or a transvection.\\

\noindent{\bf Parts a) through e).}
For a),
fix $I$ with $n\notin I$.
As $G$ is nontrivial,
either
$G$ contains a transvection $t_m$
or $s\neq 1_G$.
If $G$ contains $t_m$,
then
$f_{I,n}=f_{I,m}-t_m(f_{I,m})$
by \cref{transvectionaction}
so is divisible by $\ell$ by \cref{sf-f}.
If
$s\neq 1_G$, then
$s(f_{I,n})=\lambda^{-1}f_{I,n}$
for $\lambda\neq 1$ by \cref{diagonalizableaction}, so $\ell$ divides $f_{I,n}$ by \cref{modeh}(c).
Either way, $\ell$ divides $f_{I,n}$.
The proof of parts b), c), and
d) are similar.
For part e),
fix $j=m\leq b$ and $I$ with $n\notin I$ and $I\cap[b]=\{m\}$. 
Then
for $\sigma=(m\ n)$
and $\ep_I=\ep_{I,m}$,
$$
t_m(f_{\sigma(I),j})=f_{\sigma(I),j} - f_{\sigma(I),n} + \ep_I f_{I,j} - \ep_I f_{I,n}
$$ 
by \cref{transvectionaction}. Since
$\ell$ divides 
$t_m(f_{\sigma(I),j})-f_{\sigma(I),j}$ by \cref{sf-f} 
and also $f_{I,n}$ by part a), 
it must divide $-f_{\sigma(I),n} + \ep_I f_{I,j}$ 
and hence also
$f_{I,j} - \ep_I f_{\sigma(I),n}$.
Further, 
$\ell^{\ts e}$ divides
$f_{I,j} - \ep_I f_{\sigma(I),n}$ 
by \cref{modeh}(b) 
since it is fixed by $s$ 
(see \cref{diagonalizableaction}) and part e) follows.
\\

\noindent{\bf Part f).}
Complications arise when $\characteristic \FF=2$.
For part f),
assume $G$ does not consist
of exactly one transvection
and the identity,
and fix $I$ with $n\notin I$ and $I\cap[b]\neq\varnothing$. Then $G$ contains a transvection $t_m$ for some $m\in I\cap[b]$ (so $b\neq 0$)
and
\begin{enumerate}

\item [1)] $\characteristic \FF\neq 2$, or

\item [2)] $e>1$, or

\item [3)] $b>1$, or

\item [4)] $\characteristic \FF=2$, $e=1$, and $b=1$, but $G$ contains multiple transvections.
\end{enumerate}
\noindent 
In each case, we will show that $\ell^2$ divides $f_{I,n}$.
Then as  $s(f_{I,n})=\lambda^{-1}f_{I,n}$
by \cref{diagonalizableaction},
\cref{modeh}(d)
will imply that
$\ell^{\ts e+1}$ divides $f_{I,n}$
and the claim 
for part f) will follow.
We fix $m\in I\cap [b]$, $\sigma=(m\ n)$,
and $\ep_I=\ep_{I,m}$.
\\

\noindent \textbf{Case 1: $\mathbf{char\, \FF \neq 2}$.}
Suppose  that $\ell^2$ does not divide $f_{I,n}$. Notice $\ell$ divides $f_{I,n}$ by part a)
so
some monomial $M$ of degree $1$ in $x_n$ appears in $f_{I,n}$
with nonzero coefficient $c\in\FF$. 
As $t_m(f_{I,n})=f_{I,n}$ by \cref{transvectionaction}, 
$$f_{I,n}\in
S^{G'}=\FF[x_m^p-x_mx_n^{p-1},x_i : i\neq m]
\quad\text{ for }
G'=\langle t_m \rangle
\quad \ \ 
$$
(e.g., see \cite{SmithBook}),
%\cite[Subsections~5.6~and~8.2]{SmithBook}),
and 
the degree of $M$ in $x_m$ is a multiple of $p$ (as $x_n$ divides $f_{I,n}$).
By \cref{transvectionaction},  
$$\begin{aligned}
t_m(f_{I,m}) \;\,&= f_{I,m} - f_{I,n}
\quad\text{and}\quad
t_m(f_{\sigma(I),n}) &= f_{\sigma(I),n} + \ep_I f_{I,n}
\, ,
\end{aligned}
$$ 
so 
$Mx_m/x_n$ appears in $f_{I,m}$ and $f_{\sigma(I),n}$
with nonzero coefficients $c$ and $-\ep_I c$, respectively, by \cref{Mxm/xn}. 
Thus $\ell=x_n$
does not divide
$-f_{\sigma(I),n} + \ep_I f_{I,m}$
(as $\characteristic \FF\neq 2)$.
But
$$t_m(f_{\sigma(I),m}) = f_{\sigma(I),m} - f_{\sigma(I),n} + \ep_I f_{I,m} - \ep_I f_{I,n}
\, ,$$ 
and $\ell$ divides $f_{I,n}$, 
so $\ell$ must divide 
$-f_{\sigma(I),n} + \ep_I f_{I,m}$
by \cref{sf-f}
giving a contradiction.
Thus $\ell^2$ divides $f_{I,n}$.\\

\noindent \textbf{Case 2: $\mathbf{e>1}$.}
Here,
$s(f_{\sigma(I),m})=\lambda f_{\sigma(I),m}$
by \cref{diagonalizableaction}
so
$\ell$ divides $f_{\sigma(I),m}$ by \cref{modeh}(a). 
Also, by~\cref{transvectionaction},
$$
t_m(f_{\sigma(I),m})=f_{\sigma(I),m}-f_{\sigma(I),n}+\ep_I f_{I,m}-\ep_I f_{I,n},
$$ 
so $\ell^2$ divides $-f_{\sigma(I),n}+\ep_I f_{I,m} - \ep_I f_{I,n}$ by \cref{sf-f}.
Recall again that $\ell$ divides $f_{I,n}$ by part a)  so $\ell$ also divides $-f_{\sigma(I),n}+\ep_If_{I,m}$.
Further, $s$
fixes
%$f_{I,m}$ and $f_{\sigma(I),n}$ so
%also fixes
$-f_{\sigma(I),n}+\ep_If_{I,m}$
(see \cref{diagonalizableaction})
so $\ell^{2}$ divides $-f_{\sigma(I),n}
+\ep_I f_{I,m}$ by \cref{modeh}(b)
 as $e>1$. Therefore $\ell^2$ divides $f_{I,n}$. 
Finally,
$s(f_{I,n})=\lambda^{-1}f_{I,n}$, 
so $\ell^{\ts e+1}$ divides $f_{I,n}$ by \cref{modeh}(d).\\

\noindent \textbf{Case 3: $\mathbf{b>1}$.}
First, if $I\cap[b]\neq \{m\}$, then $\ell$ divides $f_{I,m}$ by part d).
Then by \cref{transvectionaction},
$f_{I,n}=f_{I,m}-t_m(f_{I,m})$ 
, so $\ell^2$ divides $f_{I,n}$ by \cref{sf-f}. 
Otherwise,
if $I\cap[b]=\{m\}$, take $m'\in [b]$ with $m\neq m'$, so $m'\notin I$. Then 
by \cref{transvectionaction},
$t_m(f_{\sigma(I),m'})
=f_{\sigma(I),m'}+\ep_If_{I,m'}$,
so $\ell$ divides $f_{I,m'}$ 
and
 $\ell^2$ divides
$f_{I,n}=f_{I,m'}-t_{m'}(f_{I,m'})$
by \cref{sf-f}.
\\

\noindent \textbf{Case 4: $\mathbf{\text{char } \FF=2, \, e=1, \, b=1},$ $\mathbf{G}$ contains multiple transvections.}
In this case, $m=1$
and $G$ contains the transvection $t_1$ as well as a transvection $t_1^{(\alpha)}$ with root vector $\alpha v_1$ for some $\alpha\in\FF$ that is not 0 or 1:
$$t_1 = 
\begin{smallpmatrix}
1&&1\vspace{-0.5em}\\
&\ddots\\
&&1
\end{smallpmatrix}
,\quad
t_1^{(\alpha)} = 
\begin{smallpmatrix}
1&&\alpha\vspace{-0.5em} \\
&\ddots\\
&&1
\end{smallpmatrix}.$$
Then since $\characteristic \FF=2$,
for
$\sigma=(1\ n)$,
\begin{small}
$$\begin{aligned}
t_1^{(\alpha)}(x_I)&=
\begin{cases}
x_I&\text{when $n\in I$ \,\, or $1\notin I$},\\
x_I+\alpha x_{\sigma(I)}&\text{when $n\notin I$ and $1\in I$},
\end{cases}
\ \qquad
\text{and }\quad
t_1^{(\alpha)}(v_j)&=
\begin{cases}
v_j&\;\,
\text{when $j\neq n$},
\\
\alpha v_1+v_n&\;\,
\text{when $j=n$}\, .
\end{cases}
\end{aligned}$$ 
\end{small}
Taking
sums over subsets $I\in\tbinom{[n]}{k}$ and  $1\leq j\leq n$,
we observe after some computation that
\begin{small}
$$
\begin{aligned}
t_1^{(\alpha)}(\eta)
=
&
\sum_{\substack{I,j :\\n\notin I\text{ or }1\in I,\\j\neq 1}}
t_1^{(\alpha)}(f_{I,j})\otimes x_I\otimes v_j\ 
+
\sum_{\substack{I :\\n\notin I\text{ or }1\in I}}
\big(t_1^{(\alpha)}(f_{I,1})+\alpha\, t_1^{(\alpha)}(f_{I,n})\big)\otimes x_I\otimes v_1
\\
&\;\;\;\;\;
+
\sum_{\substack{I,j :\\n\in I,1\notin I,\\j\neq 1}}
\big(t_1^{(\alpha)}(f_{I,j})+\alpha\, t_1^{(\alpha)}(f_{\sigma(I),j})\big)\otimes x_I\otimes v_j\\
&\;\;\;\;\;
+\sum_{\substack{I :\\n\in I,1\notin I}}
\big(t_1^{(\alpha)}(f_{I,1})+\alpha\, t_1^{(\alpha)}(f_{I,n})+\alpha\, t_1^{(\alpha)}(f_{\sigma(I),1})+\alpha^2\, t_1^{(\alpha)}(f_{\sigma(I),n})\big)\otimes x_I\otimes v_1
\, .
\end{aligned}
$$
\end{small}%
We equate polynomial coefficients of $\eta$ and $t_1^{(\alpha)}(\eta)$ to deduce that
\begin{small}
\begin{equation}
\label{othertransvectionaction}
t_1^{(\alpha)}(f_{I,j})=
\begin{cases}
f_{I,j}
&\text{for $j\neq 1$ when $n\notin I$\;\;\, or $1\in I$}
,
\\
f_{I,1}+\alpha f_{I,n}
&\text{for $j=1$ when $n\notin I$\;\;\, or $1\in I$}
,
\\
f_{I,j}+\alpha f_{\sigma(I),j}
&\text{for $j\neq 1$ when $n\in I$ and $1\notin I$},
\\
f_{I,1}+\alpha f_{I,n}+\alpha f_{\sigma(I),1}+\alpha^2 f_{\sigma(I),n}
&\text{for $j=1$ when $n\in I$ and $1\notin I$}.
\end{cases}
\end{equation}
\end{small}
Suppose by way of contradiction that $\ell^2$ does not divide $f_{I,n}$. 
Recall again that $\ell$ divides $f_{I,n}$ by part a) so some monomial $M$ of degree $1$ in $x_n$ appears in $f_{I,n}$ with nonzero coefficient $c\in\FF$. 
As $t_1(f_{I,n})=f_{I,n}$ by \cref{transvectionaction},
%\begin{small}
$$
f_{I,n}\in
S^{G'}=
\FF[x_1^2+x_1x_n, x_i : i\neq 1]
\quad\text{for }
G'=\langle t_1 \rangle
$$
%\end{small}
(see \cite{SmithBook}), and thus the degree of $M$ in $x_1$ is even (as $x_n$ divides
$f_{I,n}$).

We analyze the coefficients of $M$ and $Mx_1/x_n$ in $f_{I,1}$, $f_{I,n}$, $f_{\sigma(I),1}$, and $f_{\sigma(I),n}$. 
Note that
%\begin{small}
$$
f_{I,n}=0\cdot Mx_1/x_n + c\cdot M +\text{other terms}
\, .
$$ 
%\end{small}
Next, $t_1(f_{I,1})=f_{I,1} + f_{I,n}$ and $t_1(f_{\sigma(I),n})=f_{\sigma(I),n}+f_{I,n}$ by \cref{transvectionaction}, 
so the coefficients of $Mx_1/x_n$ in $f_{I,1}$ and $f_{\sigma(I),n}$ are both equal to $c$ by \cref{Mxm/xn}.
Fix $c',c''\in \FF$ with
%\begin{small}
$$\begin{aligned}
f_{I,1}=c\cdot Mx_1/x_n + c' \cdot M + \text{other terms},
\text{  }
f_{\sigma(I),n}= c \cdot Mx_1/x_n + c'' \cdot M + \text{other terms}
\, .
\end{aligned}$$
%\end{small}
%
%
Now we examine $f_{\sigma(I),1}$.
On one hand,
by \cref{transvectionaction} and \cref{othertransvectionaction},
\begin{small}
$$
\begin{aligned}
t_1(f_{\sigma(I),1})+f_{\sigma(I),1}
&=f_{\sigma(I),n}+f_{I,1}+f_{I,n}
%\\
%&
=0\cdot Mx_1/x_n + (c+c'+c'')\cdot M + \text{other terms}
%\, ,\quad
\text{ and }\\
t_1^{(\alpha)}(f_{\sigma(I),1})+f_{\sigma(I),1}
&=\alpha f_{\sigma(I),n} + \alpha f_{I,1} + \alpha^2 f_{I,n}
%\\&
=0\cdot Mx_1/x_n + (\alpha^2 c + \alpha c' + \alpha c'') M+\text{other terms}
\, .
\end{aligned}$$
\end{small}%
%
%Notice that the coefficient of $M$ in one of these expressions is nonzero since $c\neq 0$ and $\alpha\notin \{0,1\}$.
%We again use \cref{Mxm/xn} and the
%argument  in its proof
%to conclude that the coefficient, say 
Let $C$ be the coefficient of
$Mx_1/x_n$ in
$f_{\sigma(I),1}$.
Then, on the other hand, since $Mx_1/x_n$ has odd degree in $x_1$,
$$
\begin{aligned}
t_1(f_{\sigma(I),1}) + f_{\sigma(I),1}
=
C \cdot M +\text{other terms},
%\text{ and }
\quad
t_1^{(\alpha)}(f_{\sigma(I),1}) + f_{\sigma(I),1} 
=
\alpha C\cdot M + \text{other terms},
\end{aligned}
$$
so
$
C=c+c'+c''$
and 
$\alpha C=\alpha^2 c + \alpha c' + \alpha c''$,
which implies that $c=0$ (as $\alpha\neq 0,1$),
giving a contradiction.  
So $\ell^2$ divides $f_{I,n}$.
This completes part f) and the proof of the lemma.
\end{proof}

The next 
lemma
is used to establish
\cref{saitolemma}.
We set $\delta=\delta_H$, which records 
when $G$ comprises only one transvection and the identity (see \cref{deltaH}).

\begin{lemma}
\label{detcoefdiv}
For any set $\B$ of $n\tbinom{n}{k}$ elements in $(S\ot \wedge^k V^*\ot V)^G$, the determinant of $\coef(\B)$ is divisible by
$\ell$ to the power 
$$
\tbinom{n-1}{k}+(e-1)(n-1)\tbinom{n-1}{k-1}
+e
\Big(
(n-\delta)\big(\tbinom{n-1}{k}-\tbinom{n-b-1}{k}\big)
+\tbinom{n-1}{k-1}-\tbinom{n-b-1}{k-1}
\Big).
$$
\end{lemma}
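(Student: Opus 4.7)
The plan is to bound the $\ell$-adic valuation of $\det\coef(\B)$ by a combination of column operations and column factorization, using the coefficient divisibility properties recorded in \cref{lhdivides}. Each column of $\coef(\B)$ is indexed by a basis element $x_I\ot v_j$ (with $I\in\tbinom{[n]}{k}$ and $1\le j\le n$), and its entries are the corresponding $x_I\ot v_j$--coefficients $f_{I,j}$ of the invariants $\eta\in\B$. Hence every entry of a fixed column inherits the same divisibility conclusion from \cref{lhdivides}.

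I partition the columns into eight types based on whether $n\in I$, whether $j=n$, and how $I$ meets $[b]$: types A1 and A2 (columns $(I,n)$ with $n\notin I$, split according to $I\cap[b]\ne\varnothing$ or $=\varnothing$); B1 and B2 (columns $(I,n)$ with $n\in I$, split similarly); C (columns $(I,j)$ with $n\in I$ and $j<n$); and D1, D2, D3 (columns $(I,j)$ with $n\notin I$, $j<n$, split according to whether $I\cap[b]$ strictly contains $\{j\}$, equals $\{j\}$, or is empty, respectively). Parts (a)--(d) and (f) of \cref{lhdivides} give outright divisibility of the column entries by $\ell^{e+1}$ on A1 when $\delta=0$ and by $\ell$ on A1 when $\delta=1$, by $\ell$ on A2, by $\ell^e$ on B1, by no factor on B2, by $\ell^{e-1}$ on C, by $\ell^e$ on D1, and by no factor on D3.

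The D2 columns are the only obstruction to a direct factorization, and part (e) handles them: for each D2 index $(I,j)$ (so $j\le b$ and $I\cap[b]=\{j\}$), I replace column $(I,j)$ by column $(I,j)-\ep_{I,j}\cdot\mathrm{column}(\sigma(I),n)$, where $\sigma=(j\ n)$, producing entries now divisible by $\ell^e$. Here $\sigma(I)$ contains $n$ and satisfies $\sigma(I)\cap[b]=\varnothing$, so column $(\sigma(I),n)$ is of type B2 and is not itself modified. Distinct D2 indices correspond to distinct D2 columns modified independently of each other, so these column operations commute and preserve $\det\coef(\B)$. Factoring out from each column the power of $\ell$ indicated above, I obtain $\det\coef(\B)=\ell^{N}\cdot\det(M)$ with $M\in M_{n\sbinom{n}{k}\times n\sbinom{n}{k}}(S)$.

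It remains to count columns of each type and sum their contributions. Using the identities $|{\rm A1}|+|{\rm A2}|=\tbinom{n-1}{k}$, $|{\rm B1}|=\tbinom{n-1}{k-1}-\tbinom{n-b-1}{k-1}$, $|{\rm C}|=(n-1)\tbinom{n-1}{k-1}$, $|{\rm D2}|=b\tbinom{n-b-1}{k-1}$, and $|{\rm D1}|+|{\rm D2}|=(n-1)(\tbinom{n-1}{k}-\tbinom{n-b-1}{k})$, a routine collection yields $N=\tbinom{n-1}{k}+(e-1)(n-1)\tbinom{n-1}{k-1}+e\, a_{H,k}$, matching the claim. The main obstacle is bookkeeping: arranging the D2 column operations so they neither overlap nor disturb the B2 columns, and carefully tracking the $\delta=1$ exception where part (f) does not promote A1 columns from $\ell$ to $\ell^{e+1}$---this is precisely the loss encoded by the $(n-\delta)$ factor in $a_{H,k}$.
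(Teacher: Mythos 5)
Your proof is correct and follows essentially the same column-counting argument as the paper: the paper's sets $A,B,C,D,E,F$ are exactly your ${\rm A1}\cup{\rm A2}$, ${\rm B1}$, ${\rm C}$, ${\rm D1}$, ${\rm D2}$, ${\rm A1}$, with the same column operations pairing each D2 column with an unmodified B2 column via part (e), and the same $\delta$-dependent treatment of the A1 columns via part (f). The only slip is cosmetic: your D1 should be described as all $(I,j)$ with $n\notin I$, $j<n$, $I\cap[b]\neq\varnothing$, and $I\cap[b]\neq\{j\}$ (not only those with $\{j\}\subsetneq I\cap[b]$), which is the reading your counting identity $|{\rm D1}|+|{\rm D2}|=(n-1)\bigl(\tbinom{n-1}{k}-\tbinom{n-b-1}{k}\bigr)$ already assumes.
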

\begin{proof}
The claim follows immediately from \cref{lhdivides}:
\begin{itemize}
\setlength{\itemindent}{-2em}
\item $\ell$ divides each column in a set $A$ of $\tbinom{n-1}{k}$ columns,

\item $\ell^e$ divides each column in a set $B$ of $\tbinom{n-1}{k-1}-\tbinom{n-b-1}{k-1}$ columns,

\item $\ell^{e-1}$ divides each column in a set $C$ of $(n-1)\tbinom{n-1}{k-1}$ columns,

\item $\ell^e$ divides each column in a set $D$ of $(n-1)\tbinom{n-1}{k}-(n-1)\tbinom{n-b-1}{k}-b\tbinom{n-b-1}{k-1}$ columns,

\item $\ell^e$ divides each column in a set $E$ of $b\tbinom{n-b-1}{k-1}$ columns after some column operations, and

\item $\ell^{e(1-\delta)+1}$ divides each column in a set $F$ of  $\tbinom{n-1}{k}-\tbinom{n-b-1}{k}$ columns,
\end{itemize}
where the sets $A$, $B$, $C$, $D$, $E$ are pairwise distinct and $F\subset A$. Hence, $\det \coef(\B)$ is divisible by $\ell$ to the power
$|A|+(e-1)|C|+e\big(|B|+|D|+|E|+(1-\delta)|F|\big).$
\end{proof}

\section*{Acknowledgements}
The authors thank Hiroaki Terao for helpful and lively discussions.

\end{document}